\tikzstyle{arrow}=[draw=black,arrows=-latex]
\newtheorem{theorem}{Theorem}[section]
\newtheorem{lemma}[theorem]{Lemma}
\theoremstyle{definition}
\theoremstyle{remark}
\newcounter{smalllist}
\DeclareMathOperator*{\sgn}{sgn}
\numberwithin{equation}{section}
\newcommand{\lb}{\label}
\newcommand{\supp}{\text{\rm{supp}}}
\newcommand{\beq}{\begin{equation}}
\newcommand{\eeq}{\end{equation}}
\newcommand{\bal}{\begin{align}}
\newcommand{\eal}{\end{align}}
\newcommand{\bals}{\begin{align*}}
\newcommand{\eals}{\end{align*}}
\newcommand{\bbN}{{\mathbb{N}}}
\newcommand{\bbR}{{\mathbb{R}}}
\newcommand{\bbZ}{{\mathbb{Z}}}
\newcommand{\eps}{\varepsilon}
\newcommand{\bbRr}{{\mathbb{R}}}
\begin{document}
\title[Local Regularity for the Muskat Problem]
{The 2D Muskat Problem I: \\ Local Regularity on the Half-plane, Plane, and Strips}

\author{Andrej Zlato\v s}
\address{\noindent Department of Mathematics \\ UC San Diego \\ La Jolla, CA 92093, USA \newline Email:
zlatos@ucsd.edu}


\begin{abstract}
We prove local well-posedness for the Muskat problem on the half-plane, which models motion of an interface between two fluids of distinct densities (e.g., oil and water) in a porous medium (e.g., an aquifer) that sits atop an impermeable layer (e.g., bedrock).  Our result allows for the interface to touch the bottom, and hence applies to the important scenario of the heavier fluid invading a region occupied by the lighter fluid along the impermeable layer.  We use this result in the companion paper 
\cite{ZlaMuskatBlowup} 
to prove existence of finite time stable regime  singularities in this model, including for arbitrarily small initial data.  We do not require the interface and its derivatives to vanish at $\pm\infty$ or be periodic, and even allow it to be $O(|x|^{1-})$,
which is an optimal bound on the power of growth.  We also extend our results to the cases of the Muskat problem on the whole plane and on horizontal strips, where almost all previous works did impose such limiting requirements.
\end{abstract}

\maketitle

\section{Introduction and Main Results} \lb{S1}

The {\it Muskat problem} is a mathematical model for the motion of the interface between two incompressible immiscible fluids of different densities $\rho_1>\rho_0$ (such as water and oil, or salt water and fresh water) inside a porous medium (such as a sand or sandstone aquifer) \cite{Muskat, Bear}.  It has also been used as a model for cell velocity in the growth of tumors \cite{Friedman, Pozrikidis}, and is  related to the Hele-Shaw cell problem \cite{SafTay, Hele-Shaw}. If one instead considers a single fluid with continuously varying density, the corresponding model is the {\it incompressible porous medium (IPM) equation}.  In both cases one studies the transport PDE
\begin{equation} \lb{1.1}
\partial_{t} \rho + u\cdot\nabla \rho =0
\end{equation}
for fluid density $\rho$ and velocity $u$, with the former  of the form
\beq\lb{1.3a}
\rho({\bf x},t)=\rho_1- (\rho_1-\rho_0)\chi_{\Omega_t}({\bf x})
\eeq
in the case of the Muskat problem, where $\Omega_t$ is the (time-dependent) region of the lighter fluid.
The velocity $u$ is  determined via {\it Darcy's law}, which after appropriate temporal scaling --- by a factor that is the product of the gravitational constant, permeability of the medium, and the reciprocal of the fluid viscosity --- becomes
\begin{equation} \lb{1.2}
u:=-\nabla p - (0,\rho) \qquad \text{and}\qquad \nabla\cdot u=0.
\end{equation}
The term $-(0,\rho)$ models downward motion of denser regions of the fluid due to gravity (with $0$ the zero vector in one fewer dimensions than ${\bf x}$) and $p$ is the fluid pressure, caused by incompressibility and used to obtain a divergence-free $u$.  In the case of the Muskat problem,  motion of the fluid interface   $\partial\Omega_t$ determines the full dynamic, so only  $u$ on it is relevant.

In two spatial dimensions, which is the case considered in the present paper, these models have been studied extensively on the whole plane $\bbR^2$ during the last two decades and we refer the reader to the reviews \cite{GanRev, GraLaz} for an overview of the  literature.
 In particular, local regularity for the Muskat problem with arbitrarily large initial data was proved in  \cite{CorGanMuskat} in the class of fluid interfaces that are graphs of functions from $H^3(\bbR)$, as long as the lighter fluid lies above the heavier one (i.e., in the {\it stable regime}, when the Rayleigh-Taylor condition \cite{Rayleigh, SafTay} is satisfied).  Later, such results were obtained for (less regular) interfaces from $W^{2,p}(\bbR)$ for $p>1$ \cite{CGSV}, $H^2(\bbR)$  \cite{Matioc}, $H^{3/2+\eps}(\bbR)$  \cite{Matioc2},  $W^{s,p}(\bbR)$ for $p>1$ and $s\in(1+\frac 1p,2)$ \cite{AbeMat}, $\dot H^1(\bbR)\cap \dot H^{3/2+\eps}(\bbR)$  \cite{AlaLaz}, as well as from the critical spaces $H^{3/2}(\bbR)$ \cite{AlaNgu, AlaNgu2, AlaNgu4} and $C^1(\bbR)\cap L^2(\bbR)$ \cite{CNX} (we note that spaces $\dot W^{1+1/p,p}(\bbR)$ are critical in this setting).  Meanwhile, stable regime global well-posedness results for small initial data in various spaces appeared in \cite{CGS,SCH,CCGS,CGSV,GGNP, GGHP, Nguyen, Cameron, CorLaz}.  Many other works also studied the Muskat problem in  settings  with surface tension and for fluids with distinct viscosities (see, e.g.,  \cite{EscMat,CCG, Matioc, Matioc2, AbeMat}).
 
 We note that the results in \cite{AlaLaz,GGNP, GGHP} do not require solutions to vanish as $|x|\to\infty$, although they do have to ``locally'' flatten there (this is also the case for the global existence of weak solutions result for monotone $W^{1,\infty}(\bbR)$ initial data from \cite{DLL}).  However, the main result of the preprint \cite{Cameron2}  is global well-posedness for the Muskat problem on $\bbR^3$ with initial data that are small in $\dot W^{1,\infty}(\bbR^2)$ but may grow sublinearly as $|x|\to\infty$.  Hence, these solutions do not need to flatten in any sense (nor be periodic, see  \cite{CorGanMuskat, SCH}), and taking initial data depending only on one variable yields the same result on $\bbR^2$.

 When the interface is in the {\it unstable regime}, with the heavier fluid lying above the lighter one in some region, the Muskat problem was proved to be ill-posed  in Sobolev spaces in \cite{CorGanMuskat} (unstable regime solutions forming a finite time singularity were showed to exist earlier in  \cite{SCH}).
  The question whether an interface can pass from the stable regime to the unstable one was answered in the affirmative in \cite{CCFGL} (such {\it overturning} was later studied in numerous other papers, both analytically and numerically, see e.g. \cite{BCG, GomGra, CGZ, CGZ2,CCFG}).  This work therefore provided an example of  finite time breakdown of local  well-posedness for the Muskat problem on $\bbR^2$, even though the constructed interface remains smooth up to the time when it develops a vertical tangent line.  In fact, because it is an analytic planar curve, one can continue this solution uniquely past that time, but only in the space of analytic curves.   Moreover, it was shown in \cite{CCFGL} that all stable regime solutions with initial interfaces from $H^4(\bbR)$
instantly become analytic, and lower order instant smoothing for less regular initial data (with corners) was also shown to hold in the stable regime \cite{GGNP, GGHP}.  On the other hand, existence of finite time singularities for the Muskat problem on $\bbR^2$ in the stable regime remains an outstanding open problem.

%

However, since aquifers typically sit on top of (or in-between) impermeable rocky layers, of  particular importance are the cases when the domain is the half-plane 
$\bbR\times\bbR^+$ or the strip $\bbR\times(0,l)$ for some $l>0$.  On these domains the fluid velocity must also satisfy the {\it no-flow boundary condition} $u_2\equiv 0$ on the flat bottom $\bbR\times\{0\}$ (as well as on the top $\bbR\times\{l\}$ in the strip case).  Moreover, these settings  allow one to model the physically relevant and dynamically interesting scenario of {\it invasion} by one fluid of a region initially occupied by another fluid.  In this case we have $\{a\le x_1\le b\}\subseteq \Omega_{0}$ for some interval $[a,b]$ and the heavier fluid may then flow into the region $\{a\le x_1\le b\}$ underneath the lighter fluid, along the impermeable bottom layer  (and on the strip, the lighter fluid can similarly invade a region occupied by the heavier one along the  top boundary).

While well-posedness for the Muskat problem on the half-plane or on the strip follows via straightforward adjustments of the whole plane proof when the fluid interface  stays away from the domain boundary, all the relevant estimates blow up as this distance decreases.  (We note that the currently available local regularity results for IPM on strips apply only to solutions that are constant on the top and bottom \cite{CCL}, which is analogous to the Muskat interface not touching the boundary.  In fact, vanishing of higher normal derivatives of the solutions on the boundary  is required in \cite{CCL}, which is needed to avoid complicated boundary layer terms even for solutions that are just constant on the boundary \cite{DGL}.)  In particular, the important invasion scenarios above cannot be modeled using existing results.  Our first motivation is to remedy this by showing local well-posedness for the Muskat problem on $\bbR\times\bbR^+$ and on $\bbR\times(0,l)$ for all sufficiently smooth initial interfaces, including those that touch the domain boundary. We do this in Theorems~\ref{T.1.1} and \ref{T.1.3} below.

Our second motivation is to demonstrate existence of {\it stable regime singularity formation} for the fluid interface.  Although the interface overturning examples on $\bbR^2$ in \cite{CCFGL} involve breakdown of local well-posedness in Sobolev spaces, and the fluid interface acquires infinite slope in finite time, it  does not develop a singularity as a curve at that time.  
Nevertheless, it was proved in  \cite{CCFG} that such interfaces can lose analyticity (and even $C^4$ regularity) in finite time, although this happens in the unstable regime (see \cite{CGZ2} for examples of interfaces that remain analytic and eventually return into the stable regime).  
%

In contrast, we  show in the companion paper \cite{ZlaMuskatBlowup}, using the main results of the present paper, that $H^3$ fluid interfaces on the half-plane can indeed develop stable regime finite time singularities, even for arbitrarily small initial data.  The mechanism for this is invasion by the heavier fluid of a region occupied by the lighter fluid along the impermeable bottom layer, described above, specifically when the invasion proceeds from both directions.  In this scenario,  arguments in \cite{ZlaMuskatBlowup} suggest that the fluid interface develops a singularity at the meeting point of the two invading ``fronts''.  We note that $H^3$ interfaces (and hence also these invading fronts) must have zero contact angles where they touch the boundary, but this is a natural (pre-singularity) assumption in the stable regime.  In fact,  if an initial interface is the graph of a function that equals $\psi(x):=\max\{\alpha x,0\}$ for $|x|\le 1$ (with $\alpha>0$) and is smooth and bounded elsewhere, a simple computation of the corresponding velocity \eqref{1.2} via formulas \eqref{1.3} and \eqref{1.4b} below shows that it satisfies  $u(x,\psi(x))= \frac{(\rho_1-\rho_0)\alpha^2}{\pi(1+\alpha^2)} (\ln x,0) + O(1)$ as $x\to 0$.  This means that even if we had a local well-posedness theory for such initial data, the contact angle of the corresponding solution would have to vanish instantaneously.  However, the small data singularity result in \cite{ZlaMuskatBlowup} and the proof of our main result below (vs.~the one in  \cite{CorGanMuskat}) suggest a potential low regularity theory to be much more involved
 than on the plane.

Finally, our third motivation is to generalize existing theory for the Muskat problem on $\bbR^2$ to fluid interfaces that need not be spatially periodic or vanishing/flattening at $\pm\infty$, which is the current state of the art except for the small data result in \cite{Cameron2}.  To better model real world situations, one should be able to include at least general bounded interfaces, with uniformly bounded Sobolev norms on unit spatial intervals.  We do this here and go even further, allowing interfaces with $O(|x|^{1-})$ growth as $|x|\to\infty$. 
This is an optimal result (see Remark 2 after Theorem \ref{T.1.1}), and we establish it  on the whole plane, half-plane, as well as on strips (of course, in the latter case all solutions are bounded).  In addition, we obtain here maximum principles for such fluid  interfaces on all these domains.  

The treatment of such general classes of interfaces will require proper definitions of norms of the functions involved.  We provide these next, after presenting the interface PDE for the Muskat problem on the half-plane, and then state our main results.

\vskip 3mm
\noindent
{\bf Basic setup.} 
We will perform all of our analysis on the half-plane  $\bbR\times\bbR^+$.  The whole plane case, which is a greatly simplified version of the former, and the strip case, which will add some extra difficulties, will be treated in the last section.
From \eqref{1.2} we see that the fluid {\it vorticity} $\nabla^\perp\cdot u$ equals $\rho_{x_1}$, where we use the convention $\nabla^\perp:=(\partial_{x_2},-\partial_{x_1})$ (which is the negative of the usual definition but it will be more convenient here).
It follows that
\begin{equation} \lb{1.3}
u({\bf x},t):= \nabla^\perp \Delta^{-1} \rho_{x_1}({\bf x},t) = \frac 1{2\pi} \int_{\bbR\times\bbR^+} \left( \frac{({\bf x}-{\bf y})^\perp}{|{\bf x}-{\bf y}|^2} - \frac{({\bf x}-\bar {\bf y})^\perp}{|{\bf x}-\bar {\bf y}|^2} \right) \rho_{x_1}({\bf y},t) \, d{\bf y},
\end{equation}
where $\Delta$ is the Dirichlet Laplacian on $\bbR\times\bbR^+$, $\bar {\bf y}:=(y_1,-y_2)$, and ${\bf y}^\perp:=(y_2,-y_1)$.

In the stable regime on the half-plane, the interface of the two fluids at time $t\ge 0$ is the graph of some (sufficiently regular) non-negative function $x_1\mapsto f(x_1,t)$, with the lighter fluid lying above the heavier one (so $\Omega_t=\{x_2>f(x_1,t)\}$).  It is then not difficult to derive from \eqref{1.1}--\eqref{1.2} a PDE for the motion of this interface, which we do for the reader's convenience on both the half-plane and the plane in the next section (see Section \ref{S12} for the strip case).  This then obviously fully determines the dynamic of the fluid as long as the interface remains the graph of a (regular-enough) non-negative  function.  We can assume without loss  that $\rho_1-\rho_0=2\pi$ (otherwise we scale time by a factor of $\frac {\rho_1-\rho_0}{2\pi}$), when the resulting PDE is
\begin{equation} \lb{1.5}
f_t(x,t) = PV \int_\bbR \left[ \frac{y\, (f_x(x,t)-f_x(x-y,t))}{y^2+(f(x,t)-f(x-y,t))^2} + \frac{y\, (f_x(x,t)+f_x(x-y,t))}{y^2+(f(x,t)+f(x-y,t))^2} \right]  dy
\end{equation}
(we also replaced $x_1$ by $x$ here). Dropping the second fraction similarly yields the corresponding PDE on the plane, which is \eqref{1.13} below, while the strip case results in \eqref{12.1}.

In all three cases, the regularity level of $f(\cdot,t)$ considered here will be $H^3$, as in \cite{CorGanMuskat}, but  we will only require uniform boundedness of $H^2$-norms of $f_x(\cdot,t)$ on unit  intervals and even allow $O(|x|^{1-})$ growth of $f$ as $|x|\to\infty$.  This motivates the following definitions.
For  $k=0,1,\dots $ define the  ``locally uniform'' norms  (similar to those from \cite{Kato})
\begin{align*}
\|g\|_{\tilde L^2(\bbR)}  := \sup_{x\in\bbR} \|g\|_{L^2([x-1,x+1]) } 
\qquad\text{and}\qquad
 \|g\|_{\tilde H^k(\bbR)}  := \sum_{j=0}^k \|g^{(j)}\|_{\tilde L^2(\bbR)} 
\end{align*}
on the spaces of those $g\in L^2_{\rm loc}(\bbR)$ for which these are finite. Then for  $k\ge 1$ and any $\gamma\in[0,1]$ define the seminorms
\begin{align*}
 \|g\|_{\ddot C^\gamma(\bbR)}  := \sup_{|x- y|\ge 1} \frac{|g(x)-g(y)|}{|x-y|^\gamma} 
 \qquad\text{and}\qquad
 \|g\|_{\tilde H^k_\gamma(\bbR)}  :=  \|g'\|_{\tilde H^{k-1}(\bbR)} 
+ \|g\|_{\ddot C^{1-\gamma}(\bbR)} ,
\end{align*}
which vanish for constant functions.    
In particular, $\|g\|_{\tilde H^k_\gamma(\bbR)}<\infty$ allows $g$ to have $O(|x|^{1-\gamma})$ growth at $\pm\infty$.
Our reason for using $1-\gamma$ here is that for all $g\in L^2_{\rm loc}(\bbR)$ we now have
\beq\lb{1.6a}
 \|g\|_{\tilde H^{k}_{\gamma'}(\bbR)}  \le \|g\|_{\tilde H^{k}_\gamma(\bbR)} 
+ \|g'\|_{L^\infty(\bbR)}  \le 2  \|g\|_{\tilde H^{k}_\gamma(\bbR)}
\eeq
when $k\ge 1$ and $0\le\gamma'\le\gamma\le  1$, which agrees with a similar inequality for the norms $\|\cdot\|_{C^{k,\gamma}}$ (and will also be convenient  for the analogous continuous seminorms from Section \ref{S2}).

\vskip 3mm
\noindent
{\bf Main results.} 
We can now state our main results.  We do this  for the half-plane case in the first two theorems, and then extend them to the plane and strip cases in the third one.  The first result includes local well-posedness for \eqref{1.5} as well as an important blow-up criterion.

\begin{theorem} \lb{T.1.1}
If $\gamma\in(0,1]$ and $\|\psi\|_{\tilde H^3_\gamma(\bbR)}<\infty$ for some $\psi\ge 0$,  there is $\gamma$-independent
\beq\lb{1.10}
T_\psi \ge C_\gamma \min \left\{ \|\psi\|_{\tilde H^3_\gamma(\bbR)}^{-4}, \,1+ \left| \ln \|\psi\|_{\tilde H^3_\gamma(\bbR)} \right| \right\}
\eeq
(with some $C_\gamma>0$ depending only on $\gamma$)
such that the following hold.  

(i) 
There is a  classical solution $f\ge 0$ to \eqref{1.5} on $\bbR\times[0,T_\psi)$ with $f(\cdot,0)\equiv\psi$
such that 
\beq \lb{1.7}
\sup_{t\in[0,T]} \|f(\cdot,t)\|_{\tilde H^3_\gamma(\bbR)} <\infty
\eeq
and 
\beq \lb{1.7a}
\sup_{t\in[0,T]} \|f_t(\cdot,t)\|_{W^{1,\infty}(\bbR)}<\infty
\eeq
 for any $T\in[0,T_\psi)$.  And if $T_\psi<\infty$, then for each $\gamma'\in(0,1]$ we have
\beq \lb{1.8}
\int_0^{T_\psi} \|f_x(\cdot,t)\|_{C^{1,\gamma'}(\bbR)}^4 \, dt =\infty.
\eeq

(ii)
For each $T\in[0,T_\psi)$, $f$ from (i) is the unique non-negative classical solution to \eqref{1.5} on $\bbR\times[0,T]$ that satisfies \eqref{1.7}.  We also have
\beq \lb{1.9}
\sup_{t\in[0,T]} \|f(\cdot,t)-\psi\|_{\tilde H^3(\bbR)} <\infty,
\eeq
and if there are $\psi_{\pm\infty}\in\bbR$ such that
$\psi-\psi_{\pm\infty}\in H^3(\bbR^\pm)$, then even 
\beq \lb{1.11}
\sup_{t\in[0,T]}  \|f(\cdot,t)-\psi\|_{H^3(\bbR)}  <\infty.
\eeq
\end{theorem}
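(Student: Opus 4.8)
I would follow the standard route for quasilinear parabolic problems: first establish a closed a priori estimate at the $H^3$ level; then set up a regularized approximation scheme whose solutions inherit this estimate uniformly; pass to the limit by a compactness argument to obtain the solution in (i); prove the uniqueness in (ii) by a Gr\"onwall estimate for the difference of two solutions; and deduce the blow-up criterion \eqref{1.8} by a continuation argument. The entire difficulty is to carry out the first three steps while controlling $f$ only through the local/seminorm quantity $\|f\|_{\tilde H^3_\gamma(\bbR)}$ and never through a genuine Sobolev norm. Before any of this, one must check that the right-hand side of \eqref{1.5} is a well-defined classical object for $f\ge0$ with $\|f\|_{\tilde H^3_\gamma(\bbR)}<\infty$: this is where $\gamma>0$ enters, since the principal value kills the odd, non-integrable leading part of the kernel at large $|y|$, and the $O(|x|^{1-\gamma})$ restriction encoded in $\|\cdot\|_{\ddot C^{1-\gamma}(\bbR)}$ makes the remaining tail integrable; one should also note that this object vanishes wherever $f$ touches the bottom, which is what will make $f\ge0$ propagate.

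\textbf{Step 1: the a priori estimate.} For a sufficiently smooth solution $f\ge0$ of \eqref{1.5}, I would fix a smooth cutoff $\phi$ supported in an interval of length $3$ and equal to $1$ on its middle unit subinterval, compute $\frac{d}{dt}\|\phi\,\partial_x^3 f(\cdot,t)\|_{L^2(\bbR)}^2$, and take the supremum over translates of $\phi$, which controls $E(t):=\|f_x(\cdot,t)\|_{\tilde H^2(\bbR)}^2$. The top-order term --- all three derivatives falling on $f_x(x)-f_x(x-y)$ in the first fraction --- should be bounded above by $-c\,D(t)$ for a localized $\dot H^{1/2}$-type dissipation $D(t)$ of $\partial_x^3 f$, because the two-fluid configuration under consideration is the stable (Rayleigh--Taylor) one and the linearization is therefore parabolic of order one. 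The second (image-charge) fraction must be shown to contribute only lower-order or favorably-signed terms, and here the degenerate points $f(x,t)=0$ must be dealt with: such $x$ are minima of $f\ge0$, so $f_x(x,t)=0$ there, and this forced vanishing furnishes exactly the cancellation that offsets the vanishing of the denominator $y^2+(f(x,t)+f(x-y,t))^2$ near $y=0$. The remaining commutator terms I would estimate by a polynomial $P$ in $E(t)$ and in $\|f(\cdot,t)\|_{\ddot C^{1-\gamma}}$ via Leibniz/commutator estimates and Sobolev embedding on unit intervals, the crucial point being that the large-$|y|$ parts of all these nonlocal integrals converge and are controlled purely by $\|f(\cdot,t)\|_{\tilde H^3_\gamma(\bbR)}$ --- the sole role of the seminorm $\|\cdot\|_{\ddot C^{1-\gamma}}$. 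The same computation yields $\|f_t(\cdot,t)\|_{W^{1,\infty}(\bbR)}\le P(E(t),\|f(\cdot,t)\|_{\ddot C^{1-\gamma}})$, which is \eqref{1.7a}, and combined with $\|f(\cdot,t)-\psi\|_{\ddot C^{1-\gamma}}\le 2t\sup_{s\le t}\|f_t(\cdot,s)\|_{L^\infty}$ (immediate since $|x-y|\ge1$ there) lets one propagate $\|f(\cdot,t)\|_{\ddot C^{1-\gamma}}$ forward. The upshot is a differential inequality $\tfrac{d}{dt}E\le -cD+P$ with constants independent of the far field; integrating it --- discarding $-cD\le0$ and closing a short-time bootstrap in the large-data regime, and exploiting the dissipation in the small-data regime --- gives a lower bound on the maximal existence time of the form \eqref{1.10}, the Riccati-type term $\|\psi\|_{\tilde H^3_\gamma}^{-4}$ and the logarithmic term reflecting the coupling between the parabolic energy $E$ and the non-smoothing seminorm $\|f\|_{\ddot C^{1-\gamma}}$, with \eqref{1.7} holding on every $[0,T]\subset[0,T_\psi)$.

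\textbf{Step 2: construction, uniqueness, and the persistence statements.} I would regularize \eqref{1.5} (say by mollifying the kernel and/or adding a vanishing viscosity), arranging that $f\ge0$ is still preserved, and solve the regularized problem locally in time by a contraction argument in a function space allowing the permitted growth; the estimate of Step 1 survives with constants uniform in the regularization, so the approximate solutions all exist on $[0,T_\psi)$ and satisfy \eqref{1.7}, \eqref{1.7a} uniformly. Equicontinuity in $t$ from \eqref{1.7a} and in $x$ from \eqref{1.7} give compactness in $C_{\loc}$ of space-time together with enough derivatives; a subsequence converges to some $f\ge0$, and the uniform $C^{1,\gamma'}$-type bounds on $f_x$ let one pass to the limit inside the principal value, so $f$ is a classical solution of \eqref{1.5} with $f(\cdot,0)\equiv\psi$, which gives (i) apart from \eqref{1.8}. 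For uniqueness, given non-negative classical solutions $f,g$ on $\bbR\times[0,T]$ satisfying \eqref{1.7} with the same data, I would run an energy estimate for $w:=f-g$ one derivative below the solution regularity, closed using the higher regularity of $f$ and $g$, to get $\tfrac{d}{dt}\|w\|^2\le C\|w\|^2$ and hence $w\equiv0$. Estimate \eqref{1.9} is then immediate from (i): $\|f(\cdot,t)-\psi\|_{L^\infty(\bbR)}\le t\sup_{s\le t}\|f_t(\cdot,s)\|_{L^\infty}<\infty$ controls the $\tilde L^2$ part of $\|f(\cdot,t)-\psi\|_{\tilde H^3(\bbR)}$, while $\|\partial_x(f(\cdot,t)-\psi)\|_{\tilde H^2(\bbR)}\le\|f_x(\cdot,t)\|_{\tilde H^2}+\|\psi_x\|_{\tilde H^2}<\infty$ controls the rest. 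Under the extra hypothesis $\psi-\psi_{\pm\infty}\in H^3(\bbR^\pm)$, I would upgrade this to \eqref{1.11} by a further energy estimate for the genuine (nonlocal) $H^3$ norm of $f(\cdot,t)-\psi$, whose asymptotically-constant structure the flow preserves.

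\textbf{Step 3: the blow-up criterion, and the main obstacle.} For \eqref{1.8} the refined form of the Step 1 estimate should read $\tfrac{d}{dt}E\le -cD+C\,\|f_x(\cdot,t)\|_{C^{1,\gamma'}}^4\,E$ for every $\gamma'\in(0,1]$ (the fourth power being what the nonlinear estimates produce after the dissipation absorbs the top-order contribution), along with a comparable bound for $\|f(\cdot,t)\|_{\ddot C^{1-\gamma}}$; so if $T_\psi<\infty$ but $\int_0^{T_\psi}\|f_x(\cdot,t)\|_{C^{1,\gamma'}}^4\,dt<\infty$ for some $\gamma'$, Gr\"onwall keeps $E$ and $\|f(\cdot,t)\|_{\ddot C^{1-\gamma}}$ bounded up to $T_\psi$, and restarting via Step 2 from a time just below $T_\psi$ extends the solution past $T_\psi$, contradicting maximality. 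I expect the main obstacle to be Step 1: performing the $H^3$ energy estimate using only the local norm $\|\cdot\|_{\tilde H^3_\gamma}$ forces a delicate near-field/far-field splitting of every nonlocal integral and real use of the specific combination of the two fractions in \eqref{1.5} for convergence and boundedness at infinity, and all of it must be done with constants that stay uniform down to the degenerate configuration $f=0$, where the image term is controlled only through the forced vanishing of $f_x$ at zeros of $f$. Everything else --- the regularization, the compactness, the Gr\"onwall uniqueness, and the continuation --- should be comparatively routine once this estimate is secured.
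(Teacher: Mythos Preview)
Your overall architecture matches the paper's, but Step~1 contains a genuine gap in the treatment of the image term.  You write that at degenerate points $f(x,t)=0$ one has $f_x(x,t)=0$, and that this ``furnishes exactly the cancellation that offsets the vanishing of the denominator.''  This handles only the easiest case.  The actual difficulty --- which the paper identifies as the crux of the whole proof --- occurs where $f(x,t)$ is \emph{small but strictly positive}: there $f_x$ is merely $O(\sqrt{f(x,t)})$ (from the elementary bound $|f'(x)|^2\le 4\|f''\|_{L^\infty} f(x)$ for nonnegative $f$), and this is \emph{not} enough to control the integrands in $I_3^+$ and $I_4^+$, the terms where two or three $x$-derivatives fall on the denominator of the image fraction.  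Even after symmetrizing in $y\mapsto -y$, the leading contributions still have integrals over $|y|\le\sqrt{f(x)}$ that blow up like $f(x)^{-1/2}$ (the paper makes this explicit at \eqref{2.42a}).  So your expectation that the image fraction ``contribute[s] only lower-order or favorably-signed terms'' is incorrect: these terms are neither lower-order nor sign-definite, and their estimation occupies the bulk of the a priori analysis.

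The paper's resolution is a set of explicit algebraic identities (Lemmas~\ref{L.2.1}--\ref{L.2.3}, specifically \eqref{2.34}, \eqref{2.45}, \eqref{2.61}): after Taylor-expanding $f$ about $x$ and isolating the leading-coefficient pieces, the dangerous terms combine into exact $y$-derivatives and hence integrate to bounded boundary values.  For instance the individual quantities $y\tilde P_4(y)$ and $y^3\tilde Q_4(y)$ each have divergent integrals, but the specific combination $3y\tilde P_4-4y^3\tilde Q_4$ that the PDE actually produces is an exact derivative.  Nothing in your sketch --- parabolic dissipation, commutator bounds, or the vanishing of $f_x$ at zeros of $f$ --- generates these cancellations, and without them the estimate does not close uniformly as $\inf f\to 0$.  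Two smaller discrepancies: the paper does not exploit a dissipative term $-cD$ to absorb anything (the sign of $L_1^\pm$ is used only to make \eqref{2.10} one-sided, and the lower bound \eqref{1.10} comes from the ODE $z'=C(1+z^4)z$ rather than from dissipation); and it does not build $f\ge 0$ into the regularization, instead proving $\inf f_\eps(\cdot,t)\ge\eps$ for a uniform time via an ODE argument on the infimum (see \eqref{10.9}--\eqref{10.11}).
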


{\it Remarks.}
1.  The last claim in (i) means that $f$ develops a singularity at $T_\psi$ if the latter is finite.  This could be either overturning, when $f_x$ blows up, or an interface curve singularity.
\smallskip

2. One may ask whether these results extend to the case $\gamma=0$, when even  linear growth of $f(\cdot,t)$ is allowed at $\pm\infty$.  If, e.g.,  $\psi'=\chi_{[0,\infty)}$ outside of some bounded interval, it is easy to see that the PV integral in \eqref{1.5} is not defined at any $x\in\bbR$ with $\psi'(x)\neq 0$ when $t=0$, while the one in \eqref{1.13} is not defined at $x$ with $\psi'(x)\neq -1$.  
This shows that Theorem \ref{T.1.1} is optimal with respect to the power of the allowable growth of $f(\cdot,t)$ at $\pm\infty$.
\smallskip

3. The last claim in (ii) is a version of \cite[Theorem 1.2]{DLL}.
\smallskip

Our second main result contains several important  estimates for \eqref{1.5}.  These include an $L^\infty$ maximum principle for $f$ (which was proved in \cite{CorGanMuskat} for $H^3(\bbR)$ interfaces on the whole plane), the claim that the interface cannot ``peel off''  the bottom without a loss of regularity, and an estimate on the difference of two solutions that are initially close on a large interval.  The latter result, which we will  use to obtain the maximum principle on strips, uses norms $\|\cdot\|_{C^{2,\gamma}_\gamma}$ defined in Section \ref{S2} as well as
\[
\|g\|_{\tilde L^{2,\mu}(\bbR)} :=  \left\|   \sqrt{1+(\mu-|x|)_+} \, g(x) \right\|_{\tilde L^2(\bbR)}.
\]

\begin{theorem} \lb{T.1.2}
Let $\gamma,\psi,f$ be as in Theorem \ref{T.1.1}.

(i)
Then $\sup f(\cdot,t)$ is non-increasing and $\inf f(\cdot,t)$ is non-decreasing on $[0,T_\psi)$.

(ii)
If $\inf \psi=0$, then $\inf f(\cdot,t)=0$ for all $t\in[0,T_\psi)$. 

(iii)
If $\|\tilde \psi\|_{\tilde H^3_\gamma(\bbR)}<\infty$ and  $\tilde f\ge 0$ is a solution to \eqref{1.5} with $\tilde f(\cdot,0) \equiv \tilde \psi\ge 0$, then
\[
\left\| f(\cdot,t)-\tilde f(\cdot,t) \right\|_{\tilde L^{2,\mu}(\bbR)} \le C_\gamma \exp \left[  C_\gamma \int_0^t \left(1+  \|f(\cdot,s)\|_{C^{2,\gamma}_\gamma}^2 + \|\tilde f(\cdot,s)\|_{C^{2,\gamma}_\gamma}^2 \right) ds \right] \left\| \psi-\tilde \psi \right\|_{\tilde L^{2,\mu}(\bbR)}
\]
holds for each 
$\mu\ge 0$ and $t\in[0,\min\{T_\psi,T_{\tilde \psi}\})$, where $C_\gamma$ only depends on $\gamma$.
\end{theorem}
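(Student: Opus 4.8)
The plan is to obtain (i) and (ii) by evaluating \eqref{1.5} at (near\nobreakdash-)extremal points of $f(\cdot,t)$, and (iii) by a localized, weighted $L^2$ energy estimate for $w:=f-\tilde f$. The engine behind (i)--(ii) is a pair of elementary facts about $h(u,s):=\frac{s}{u^2+s^2}$: for each fixed $u$ the map $h(u,\cdot)$ is subadditive on $[0,\infty)$ (since $\frac{s_1+s_2}{u^2+(s_1+s_2)^2}\le\frac{s_1}{u^2+s_1^2}+\frac{s_2}{u^2+s_2^2}$), and $\int_\bbR h(u,a)\,du=\pi$ for every $a>0$.

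For (i), fix $t\in[0,T_\psi)$ and a point $x_0$ with $f_x(x_0,t)=0$ (an exact extremum of $f(\cdot,t)$ if one exists; otherwise a near-extremum, handled by a limiting argument that uses $\|f_{xx}(\cdot,t)\|_{L^\infty}<\infty$, which follows from \eqref{1.7}). Substituting $z=x-y$ in \eqref{1.5} and noting that, away from $z=x_0$, $\arctan\!\big(\tfrac{f(z,t)\pm f(x_0,t)}{z-x_0}\big)$ is an antiderivative in $z$ of the corresponding kernel up to an explicit lower-order correction (a multiple of $h(x_0-z,\,f(z,t)\pm f(x_0,t))$), while the boundary values at $z=\pm\infty$ vanish because $f$ has sublinear growth (this is where $\gamma>0$ is used) and the antiderivative for the image term jumps by a constant $\pi$ at $z=x_0$ exactly when $f(x_0,t)>0$, one arrives at
\[
f_t(x_0,t)=\int_\bbR\Big[h\big(x_0-z,\,2M\big)-h\big(x_0-z,\,M-f(z,t)\big)-h\big(x_0-z,\,M+f(z,t)\big)\Big]\,dz,\qquad M:=f(x_0,t),
\]
the first term being $0$ when $M=0$. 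If $x_0$ is a maximum, then $0\le f(z,t)\le M$, so $(M-f(z,t))+(M+f(z,t))=2M$ with both summands in $[0,\infty)$ and subadditivity forces the bracket to be $\le0$; hence $f_t(x_0,t)\le0$ and $\sup f(\cdot,t)$ is non-increasing. If $x_0$ is a minimum, then $f(z,t)\ge M$, so $(f(z,t)-M)+2M=f(z,t)+M$; rewriting $h(x_0-z,M-f(z,t))=-h(x_0-z,f(z,t)-M)$, subadditivity makes the bracket $\ge0$ and $\inf f(\cdot,t)$ is non-decreasing. This proves (i).

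For (ii), I would make the minimum case quantitative. Writing $m:=m(t)=\inf f(\cdot,t)$, $u:=x_0-z$ and $c:=f(z,t)-m\ge0$, a direct algebraic simplification of the bracket above gives the fully factored form
\[
f_t(x_0,t)=\int_\bbR\frac{2m\,c\,(c+2m)\,\big(3u^2+c(c+2m)+4m^2\big)}{(u^2+c^2)\,\big(u^2+(c+2m)^2\big)\,(u^2+4m^2)}\,dz,
\]
which is manifestly nonnegative and carries an explicit factor $m$. Splitting the integral at $|u|=1$ — on $|u|\le1$ one has $c=f(z,t)-m\le\tfrac12\|f_{xx}(\cdot,t)\|_{L^\infty}u^2$ because $x_0$ is a critical point, and on $|u|\ge1$ one has $c\le\|f(\cdot,t)\|_{\ddot C^{1-\gamma}}|u|^{1-\gamma}$ — a crude bound of the integrand yields $f_t(x_0,t)\le C_\gamma\big(1+\|f(\cdot,t)\|_{\tilde H^3_\gamma(\bbR)}^2\big)\,m(t)$. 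Since $t\mapsto m(t)$ is Lipschitz (by \eqref{1.7a}), non-decreasing (by (i)), $m(0)=\inf\psi=0$, and $m'(t)\le f_t(x_0,t)$ at a.e.\ $t$ (an envelope-type inequality, again with near-minimizers if the infimum is not attained), Gr\"onwall's inequality forces $m\equiv0$ on $[0,T_\psi)$, giving (ii).

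For (iii), subtracting the two copies of \eqref{1.5}: numerator differences produce $w_x(x)-w_x(x-y)$, denominator differences factor as $\big(w(x)-w(x-y)\big)\big((f+\tilde f)(x)-(f+\tilde f)(x-y)\big)$, so $w_t=\calL w+R$ where $\calL=\calL[f,\tilde f]$ has a dissipative principal part ($\approx -a(x,t)(-\partial_x^2)^{1/2}w$, with $a$ bounded below in terms of $\|f_x\|_{L^\infty}+\|\tilde f_x\|_{L^\infty}$) plus a transport term, and $R$ collects terms that are linear in $w$, contain no derivative of $w$, at most two derivatives of $f,\tilde f$, and have kernels decaying like $|y|^{-2-\gamma}$ at long range (this decay, hence integrability against a linearly growing weight, again uses $\gamma>0$). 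For each $x_0\in\bbR$ take a cutoff $\phi_{x_0}$ with $\phi_{x_0}\equiv1$ on $[x_0-1,x_0+1]$, $\supp\phi_{x_0}\subset[x_0-2,x_0+2]$, and $x_0$-independent bounds on $\phi_{x_0},\phi_{x_0}'$. Differentiating $\int_\bbR\phi_{x_0}^2\,w^2\,dx$: the principal part gives the good nonpositive singular quadratic form in $\phi_{x_0}w$ (controlling $\|\phi_{x_0}w\|_{\dot H^{1/2}}^2$ up to lower order) plus commutators of $\phi_{x_0}^2$ with an order-one operator, which are absorbed into the good term by a Young/$\dot H^{1/2}$-interpolation estimate whose constants depend only on $\phi_{x_0},\phi_{x_0}'$ and are therefore $\mu$-independent; the transport term is integrated by parts in $x$, and $R$ is handled by Cauchy--Schwarz in $y$ then in $x$. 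Both output $C_\gamma\big(1+\|f\|_{C^{2,\gamma}_\gamma}^2+\|\tilde f\|_{C^{2,\gamma}_\gamma}^2\big)$ times $\sum_{n\ge0}(1+n)^{-2-\gamma}\|w(\cdot,t)\|_{L^2([x_0-2-n,\,x_0+2+n])}^2$. Multiplying the differential inequality for $\int_\bbR\phi_{x_0}^2\,w^2\,dx$ by $1+(\mu-|x_0|)_+$ and using the elementary comparison $\tfrac{1+(\mu-|x|)_+}{1+(\mu-|x_0|)_+}\le1+|x-x_0|$ to trade the far-window weights for factors $1+n$ (still summable against $(1+n)^{-2-\gamma}$ when $\gamma>0$), each term is bounded by $C_\gamma\big(1+\|f\|_{C^{2,\gamma}_\gamma}^2+\|\tilde f\|_{C^{2,\gamma}_\gamma}^2\big)\|w(\cdot,t)\|_{\tilde L^{2,\mu}(\bbR)}^2$; taking the supremum over $x_0$ and applying Gr\"onwall's inequality gives the stated estimate with $C_\gamma$ independent of $\mu$. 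The main obstacle is in (iii): one must close the estimate using only $C^{2,\gamma}_\gamma$ norms of $f,\tilde f$, which forces genuine use of the dissipation to absorb the order-$\tfrac12$ commutator and variable-coefficient errors, while simultaneously transporting the weight $\sqrt{1+(\mu-|x|)_+}$ through every nonlocal term with $\mu$-independent constants — which is exactly why one localizes with fixed-shape cutoffs $\phi_{x_0}$ and inserts the weight only at the very end. In (i)--(ii) the only subtlety is that neither ``half'' of $f_t$ at an extremum is signed by itself, nor is the constant $\pi$ from the image term's jump negligible; the sign and the quantitative bound come only from the combined, subadditivity-based expression.
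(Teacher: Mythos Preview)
Your argument for (i) is correct and in fact cleaner than the paper's: the paper reaches the same reformulation \eqref{1.6} but then estimates the right-hand side at near-extremal points via a $\delta$-argument (\eqref{11.1}--\eqref{11.4}), whereas your subadditivity observation for $s\mapsto s/(u^2+s^2)$ gives the sign in one stroke.

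There is, however, a genuine gap in (ii). Your factored formula is correct and does carry an explicit factor $m$, but the remaining integral cannot be bounded uniformly in $m$ by a ``crude bound of the integrand.'' Indeed, with $c(u)\sim\tfrac12 f''(x_0)u^2$ near $u=0$ your integrand satisfies $G(0)=\tfrac12 f''(x_0)$ (independent of $m$), and on the range $m\le|u|\le\sqrt{m}$ the best pointwise estimate using only $c\le Au^2$ gives $G(u)\le C m^3 u^{-4}$, which integrates to $O(1)$, not $O(m)$. What actually happens is a scaling collapse $G(u)\approx\phi(u/m)$ with $\phi$ integrable, so $\int G\sim m$; but seeing this requires structure, not crudeness. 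The paper avoids this entirely by using the alternative representation $f_t(x_0)=-4I_9(x_0)$ from \eqref{10.6}, where $I_9$ already carries the factor $f(x_0)=m$ \emph{outside} the integral, and then bounds the residual integral via the $y\leftrightarrow -y$ cancellation argument \eqref{10.5}. You need either that, or an equivalent argument for your $G$.

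For (iii) your plan diverges substantially from the paper and has a second gap. The long-range kernel decay is $|y|^{-2}$, not $|y|^{-2-\gamma}$ (the $\gamma$ enters only through the $\ddot C^{1-\gamma}$ growth of $f$, not the kernel itself), so after your trade $\tfrac{1+(\mu-|x|)_+}{1+(\mu-|x_0|)_+}\le 1+|x-x_0|$ you are left with $\sum_n(1+n)^{-1}$, which diverges. The paper closes with $n^{-2}$ decay by a different device: it first proves the \emph{unweighted} localized estimate \eqref{8.1} with right-hand side $\|w\|_{\tilde L^2_{x_0}}^2$, then splits the far-window sum at $N\sim\tfrac12(\mu-|x_0|)_+$, using that the weight ratio is $\le 2$ for $|n|\le N$ while the tail $\sum_{|n|>N}n^{-2}\sim N^{-1}$ itself supplies the missing factor. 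Moreover, the paper does not use dissipation at all in \eqref{8.1}: the only sign input is $L_1^+-L_1^-\le 0$ in \eqref{2.90}, and all commutators from the cutoff are bounded directly by $C\|w\|_{\tilde L^2_{x_0}}^2$ via symmetrization (see the treatment of $K_3^\pm, K_4^\pm, L_2^\pm$ in Section~\ref{S3}). Your dissipation-based route may be viable, but as written it does not close, and the paper's direct $L^2$ argument is both simpler and already sufficient.
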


{\it Remarks.}
1. Since we have \eqref{1.7}, part
(iii) also yields a local $L^\infty$ bound on $f-\tilde f$.
\smallskip

2. In \cite{ZlaMuskatBlowup} we also obtain an $L^2$ maximum principle for  $f$ and an $L^\infty$ maximum principle for $f_x$, which were proved for $H^3(\bbR)$ solutions on the whole plane in \cite{CorGanMuskatMax,CCGS}.  These will be the key ingredients in the proof of existence of finite time stable regime singularities for \eqref{1.5}.
\smallskip

We finally extend the above results to the Muskat problem on the plane and on horizontal strips.  The derivation of \eqref{1.5} in Section \ref{S2} also gives
\begin{equation} \lb{1.13}
f_t(x,t) = PV \int_\bbR  \frac{y\, (f_x(x,t)-f_x(x-y,t))}{y^2+(f(x,t)-f(x-y,t))^2}  dy
\end{equation}
in the former case (with \eqref{1.3} now involving the Laplacian on $\bbR^2$).
Similarly, in the case of the strip $\bbR\times(0,l)$ we obtain
\begin{equation} \lb{12.1}
f_t(x,t) = \sum_\pm PV \int_\bbR (f_x(x,t)\pm f_x(x-y,t))\, \Theta_l \big(y,f(x,t)\pm f(x-y,t)\big)  \, dy, 
\end{equation}
where
\begin{equation} \lb{12.1a}
\Theta_l(y,r):=\frac \pi{2l}\, \frac{\sinh \frac{\pi y}l} {\cosh \frac{\pi y}l - \cos \frac {\pi r}l}
\eeq
(see Section \ref{S12} below for the derivation of \eqref{12.1}, which is from \cite{CorGraOri}).
Our main results extend to both these PDE with the natural adjustments.

\begin{theorem} \lb{T.1.3}
(i)
Theorem \ref{T.1.1} and Theorem \ref{T.1.2}(i,iii) hold for \eqref{1.13} in place of \eqref{1.5}, without requiring that $f,\psi,\tilde f, \tilde\psi\ge 0$.

(ii) 
Theorems \ref{T.1.1} and \ref{T.1.2} hold for \eqref{12.1} in place of \eqref{1.5}, with $0\le f,\psi,\tilde f, \tilde\psi\le l$.  In addition, if $\sup \psi=l$, then $\sup f(\cdot,t)=l$ for all $t\in[0,T_\psi)$. 
\end{theorem}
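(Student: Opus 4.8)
The plan is to view \eqref{1.13} and \eqref{12.1} as variants of \eqref{1.5} whose analysis reduces to --- and is in fact simpler than --- the half-plane analysis carried out in the preceding sections, so that all of those estimates survive with only cosmetic changes.

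\emph{The plane equation \eqref{1.13}} is obtained from \eqref{1.5} by deleting the second fraction, which is precisely the ``image'' contribution of the reflected source $\bar{\bf y}$ in \eqref{1.3}. That fraction is the only place in the half-plane argument where the sign condition $f\ge0$ is used: it is what keeps the second denominator $y^2+(f(x)+f(x-y))^2$ from vanishing away from the bottom, and what produces the delicate near-$y=0$ behaviour at a bottom-touching point. Deleting it leaves literally the same estimates as before, minus those for the image term; hence the energy method, the blow-up criterion \eqref{1.8}, the uniqueness statement, the $L^\infty$ maximum principle of Theorem \ref{T.1.2}(i), and the difference estimate of Theorem \ref{T.1.2}(iii) all go through verbatim, now with no sign restriction on $f,\psi,\tilde f,\tilde\psi$. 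Part (ii) of Theorem \ref{T.1.2} has no analogue --- there is no bottom for the interface to touch --- which is why Theorem \ref{T.1.3}(i) omits it.

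\emph{The strip equation \eqref{12.1}} requires an analysis of the kernel $\Theta_l$ of \eqref{12.1a}, for which I would record three elementary facts. (a) \emph{Symmetry:} $\Theta_l(y,\cdot)$ is even and $2l$-periodic; consequently, writing $g:=l-f$, one checks $\Theta_l(y,f(x)\pm f(x-y))=\Theta_l(y,g(x)\pm g(x-y))$ and $g_x(x)\pm g_x(x-y)=-(f_x(x)\pm f_x(x-y))$, so $g$ solves the very same equation \eqref{12.1} --- the top $\{f\equiv l\}$ is to $g$ exactly what the bottom $\{f\equiv0\}$ is to $f$. (b) \emph{Local structure:} the singularities of $\Theta_l(y,\cdot)$ lie at $r\in2l\bbZ$, and near each of them $\Theta_l(y,r)$ differs by a bounded amount from $\tfrac{y}{y^2+(r-2lk)^2}$; in particular the ``$-$'' term of \eqref{12.1} (where $r=f(x)-f(x-y)\in[-l,l]$) and the ``$+$'' term near the bottom (where $r=f(x)+f(x-y)$ is near $0$) have exactly the half-plane singular structure of \eqref{1.5}, and the ``$+$'' term near the top has it in the variable $g$ by (a). (c) \emph{Decay and cancellation at infinity:} $\Psi_l(y,r):=\Theta_l(y,r)-\tfrac{\pi}{2l}\sgn(y)$ and all its $y$- and $r$-derivatives decay like $e^{-\pi|y|/l}$ as $|y|\to\infty$ uniformly in $r$, and when the two signs in \eqref{12.1} are added the non-decaying tails $\tfrac{\pi}{2l}\sgn(y)(f_x(x)\pm f_x(x-y))$ combine to the odd function $\tfrac{\pi}{l}\sgn(y)f_x(x)$, whose principal value over any symmetric interval vanishes; thus the summed integrand actually decays exponentially at $|y|=\infty$.

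\emph{Conclusion for the strip, and the main obstacle.} With (a)--(c) the strip equation has the same singular behaviour near $y=0$ as \eqref{1.5} and a strictly better --- exponential rather than $O(|y|^{-1})$ --- behaviour at spatial infinity, so the commutator and cancellation estimates driving the energy method, the blow-up criterion, uniqueness, the difference estimate, and the maximum principle of Theorem \ref{T.1.2}(i) all reproduce themselves; Theorem \ref{T.1.2}(ii) goes through because at a bottom-touching point the ``$+$'' and ``$-$'' terms of \eqref{12.1} cancel, just as on the half-plane. The two-sided constraint $0\le f\le l$ is then automatic: $f\ge0$ is preserved by the half-plane argument, $f\le l$ follows by applying it to $g=l-f$ via (a), and the last assertion of Theorem \ref{T.1.3}(ii) is just Theorem \ref{T.1.2}(ii) for $g$. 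I expect the real work to be organizational rather than analytic: isolating from the half-plane proof the exact list of kernel properties it uses (homogeneity and sign near $y=0$, pointwise and derivative bounds for $|y|\ge1$, the way constants depend on $\|f\|_{C^{2,\gamma}_\gamma}$ and on the lower bound for the second denominator), verifying that $\Theta_l$ --- equivalently $\Psi_l$ --- satisfies them with the claimed uniformity in $r$, and tracking the harmless dependence of the constants on $l$ (which, if desired, can be normalized away by the rescaling $F(x,t):=\tfrac1l f(lx,lt)$, sending a width-$l$ solution to a width-$1$ one).
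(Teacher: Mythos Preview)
Your treatment of part (i) matches the paper exactly: drop every integral carrying a superscript $+$ (the ``image'' terms) and the proofs of Theorems \ref{T.1.1} and \ref{T.1.2}(i,iii) from Sections \ref{S2}--\ref{S11} go through unchanged, with no sign condition on $f$.

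Your treatment of part (ii) is also essentially the paper's: decompose $\Theta_l$ into the two singular summands $\tfrac{y}{y^2+r^2}$ and $\tfrac{y}{y^2+(2l-r)^2}$ plus a smooth remainder with exponential decay, handle the first by the existing $I_j^\pm$ estimates, handle the second by the $f\leftrightarrow l-f$ symmetry you note in (a), and bound the remainder directly. Two points where your sketch diverges from what the paper actually does deserve mention.

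First, a small technical one. In the half-plane the only place a \emph{sign} (rather than a bound) is used is the inequality $L_1^+ - L_1^- \le 0$ in \eqref{2.90}. On the strip the top-boundary singular summand produces a third term $L_1^0$ of the same form with $l-f$ in place of $f$, and one needs $L_1^+ + L_1^0 - L_1^-$ to be controlled. This is no longer nonpositive; the paper bounds it by $C\|f'''\|_{\tilde L^2_{x_0}}^2$ using the algebraic fact that for $0\le f\le l$ one has $\min\{f(x)+f(y),\,2l-f(x)-f(y)\}\ge |f(x)-f(y)|$ while the other is $\ge l$. Your phrase ``all reproduce themselves'' hides this small new ingredient.

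Second, and more substantively: you assert that the $L^\infty$ maximum principle Theorem \ref{T.1.2}(i) ``reproduces itself'' on the strip. The paper explicitly says the opposite --- Sections \ref{S7}--\ref{S11} extend straightforwardly \emph{except} for this proof. The half-plane argument for Theorem \ref{T.1.2}(i) relies on the arctan reformulation \eqref{1.6}, and the paper does not develop a strip analogue of it. Instead the paper first establishes Theorem \ref{T.1.2}(iii) for \eqref{12.1}, then invokes the maximum principle from \cite[Theorem~5]{CorGraOri} for initial data with $\psi-\tfrac{l}{2}\in H^3(\bbR)$ and $\|\psi-\tfrac{l}{2}\|_{L^\infty}<\tfrac{l}{2}$, and finally passes to general $0\le\psi\le l$ by approximating with $\tilde\psi_{n,\eps}:=(1-\eps)(\psi-\tfrac{l}{2})\chi_n+\tfrac{l}{2}$ and applying the difference estimate (iii) with $\mu=n$, letting $n\to\infty$ and then $\eps\to 0$. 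So either you need to produce a strip analogue of \eqref{1.6} (plausible, but not what ``reproduces itself'' conveys), or you should route the argument through (iii) and the known $H^3$ result as the paper does.
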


{\it Remark.}
Note that Theorem \ref{T.1.3}(i) includes existing results from \cite{CorGanMuskat} for $\psi\in H^3(\bbR)$ (see \eqref{1.11}) and for periodic  $\psi\in H^3_{\rm loc}(\bbR)$, since in the latter case uniqueness of solutions implies periodicity of $f(\cdot,t)$ for each $t\in[0,T_\psi)$ (with the same period).

\vskip 3mm
\noindent
{\bf Discussion of the proof of Theorem \ref{T.1.1} and organization of the paper.} 
Most of the rest of this paper, namely Sections \ref{S2}--\ref{S10}, forms the proof of Theorem \ref{T.1.1}.  
When one works in $H^3(\bbR)$ and on the plane (so without a bottom and $f(\cdot,t)$ vanishes at $\pm\infty$), local well-posedness for the Muskat problem in the stable regime can be obtained via a simple adjustment of a proof from \cite{CorGanMuskat} (where in fact local well-posedness in $H^4(\bbR^2)$ was obtained for the Muskat problem on $\bbR^3$).  The crux of that proof is an estimate on the rate of change of the $L^2$ norm of $f_{xxx}(\cdot,t)$.  

In the case of Theorem \ref{T.1.1}, the crucial argument will be an analogous estimate, this time involving  {\it local} $L^2$ norms of $f_{xxx}(\cdot,t)$ and including terms corresponding to the second fraction in \eqref{1.5}.  The former requires introduction of appropriate cut-off functions $h$, and then we split the formula for the rate of change of the resulting local norms into eight integrals $I_j^\pm$, four corresponding to each term in \eqref{1.5} (see  \eqref{2.0b} below).

In the proof in \cite{CorGanMuskat}, which only involved the $I_j^-$ integrals and no cut-off functions, the main challenge was to estimate  the integral corresponding to our $I_1^-$.  This is because it  includes the most singular integrand (involving  $\partial_x^4 f$), and we make use of the arguments from \cite{CorGanMuskat} when estimating $I_1^-$ and the other integrals $I_j^-$ originating from the first fraction in \eqref{1.5}.  Our inclusion of the cut-offs $h$, and potential growth of $f(\cdot,t)$ at $\pm\infty$, unsurprisingly introduce a number of non-trivial  complications in these  arguments as well as in the rest of our proofs.  (We note that it is not possible to use the result from \cite{CorGanMuskat} to obtain Theorem~\ref{T.1.1} on the plane via some approximation scheme, even for just $\tilde H^3(\bbR)$ interfaces with no growth as $x\to\pm \infty$.)

However, working on the half-plane  increases the difficulty level much more dramatically.
Note that if $\inf_{x\in\bbR} f(x,0)>0$, then the second fraction in \eqref{1.5} is sufficiently regular and the local well-posedness proof on $\bbR^2$  extends to this case, but the obtained constants {\it blow up} as $\inf_{x\in\bbR} f(x,0)$ approaches 0.  As a result, this approach fails when  $\inf_{x\in\bbR} f(x,0)= 0$, which is the most important case, as we explained above.  
In our proof,  integrals $I_j^+$, which originate from the second fraction in \eqref{1.5}, will be much more challenging to estimate in the region where $f$ is small.  In fact, estimates on $I_3^+$ and $I_4^+$ (whose counterparts $I_3^-$ and $I_4^-$ are easily bounded) form over two thirds of the proof of our main a priori bound \eqref{2.3} in  Sections~\ref{S3}--\ref{S6}, even though these terms do not involve the highest derivatives of $f$!  

The reason for this  is the following.  When $f$ is a smooth function, the inside integrands in all the integrals $I_j^-$ are bounded functions of $y$ because their numerators vanish to at least  the same degree as their denominators.  Since $f_{xxx}(\cdot,t)$ is only $L^2_{\rm loc}$ and the integrands involve derivatives of order up to 4, obtaining estimates on these integrals in terms of only $f_{xxx}(\cdot,t)$ and lower order derivatives is far from trivial and requires various symmetrization arguments as well as leveraging of cancellations coming from adding the integrands for $y$ and $-y$ (for $y>0$).  On the other hand, denominators of the inside integrands in $I_j^+$ only vanish when $f(x,t)=0$, and in that case such cancellations still work because $f_x(x,t)=0$  (since $f\ge 0$).  What makes these integrals much more difficult to estimate is the fact that their integrands become very large (even after the $y$ vs.~$-y$ cancellation) when $f(x,t)$ is {\it close to  but not equal to 0} and $y$ is small.  And this is even more pronounced for larger $j$ (when the numerators and denominators include more factors), due to all the $\pm$ signs being $+$ and hence most of the factors not vanishing at $y=0$, which is why estimates on $I_3^+$ and $I_4^+$ will be the most involved.  

Related to (and an illustration of) this is also the following observation.  
In Section \ref{S11} we show that
 \eqref{1.5} is the same as
\begin{equation} \lb{1.6}
f_t(x,t) = \pi \chi_{(0,\infty)}(f(x,t)) +  \sum_\pm PV \int_\bbR  \frac{y\, f_x(x,t)-(f(x,t)\pm f(x-y,t))}{y^2+(f(x,t)\pm f(x-y,t))^2}  dy .
\end{equation}
The derivation of \eqref{1.6} shows that the last integral with $\pm$ being $+$, which comes from the second fraction in \eqref{1.5}, is {\it discontinuous} at all $x\in\partial\{f(\cdot,t)=0\}$, even when $f\in C_c^\infty(\bbR)$.  This also suggests that proving local well-posedness for \eqref{1.5} via \eqref{1.6} would likely  be even more complicated than our proof below.  Nevertheless, we do use \eqref{1.6} when proving Theorem \ref{T.1.2}(i) and a maximum principle for $f_x$ in \cite{ZlaMuskatBlowup}.

It follows from \eqref{2.13a} and \eqref{2.7} below that the integrand in $I_3^+$ is $O\left(\frac {|y|f(x,t)}{\max\{|y|, f(x,t)  \}^4}\right)$ when $f(x,t)\le 1$, $f_{xx}(x,t)$ is not small, and $|y|\le \sqrt{f(x,t)}$ (the integrand in $I_4^+$ is similar, albeit more complex), and hence it is far from being integrable uniformly in the size of $f(x,t)$.  
Even after adding the integrand at $y$ and $-y$, it remains
too large as we explain just before \eqref{2.34}.
However, we are able to identify here
somewhat unexpected 
additional cancellations in these integrals.  Namely, after isolating (appropriately defined)  leading order terms in the integrands, we find these terms to be $y$-derivatives of certain special functions with large variations (see \eqref{2.34}, \eqref{2.45}, and \eqref{2.61} below). Integrating these allows us to  bound both $I_3^+$ and $I_4^+$ in a way that ultimately yields the crucial a priori estimate \eqref{2.3}. 

 With \eqref{2.3} in hand, we  obtain analogous estimates for an approximating family of more regular $\eps$-dependent models  \eqref{7.1} in Section \ref{S7}.  In Sections \ref{S8} and  \ref{S9} we then derive related bounds on the rate of change of local $L^2$ norms of differences of two solutions to either \eqref{1.5} or \eqref{7.1} (these rates for single solutions may not be bounded due to possible growth of $f$ as $x\to\pm\infty$).  This proves uniqueness for \eqref{1.5}, and we then use all these estimates in Section~\ref{S10} to obtain local-in-time $\tilde H^3_\gamma$ solutions to \eqref{1.5} as $\eps\to 0$ limits of solutions to \eqref{7.1} (the latter are found via fairly standard methods), as well as finish the proof of Theorem~\ref{T.1.1}.  

Finally, the proofs of Theorems \ref{T.1.2} and \ref{T.1.3} appear in Sections \ref{S11} and \ref{S12}, respectively.  Having the proof of Theorem \ref{T.1.1}, these will involve mostly  straightforward arguments, although the extension of Theorem \ref{T.1.1} to horizontal strips  will require some care.



\section{Derivation of \eqref{1.5} and the Start of the Proof of Theorem \ref{T.1.1}} \lb{S2}

In this and all the following sections,  $C$ will  be some universal constant that may change from line to line and always depends only on $\gamma$.  We will also assume without loss that $\gamma\in(0,\frac 12]$, so that \eqref{2.0a} below holds.  We can do this because if we apply Theorem \ref{T.1.1} with  some $\gamma'<\gamma$ in place of $\gamma$ (using \eqref{1.6a} with $g:=\psi$), then \eqref{1.9} guarantees that \eqref{1.7} also holds. 

Before starting the proof of Theorem \ref{T.1.1}, let us derive \eqref{1.5} and show that this derivation is valid in the setting considered here. 

\vskip 3mm
\noindent
{\bf Derivation of \eqref{1.5} and \eqref{1.13}.} 
From \eqref{1.3a} we obtain
\beq\lb{1.4b}
\rho_{x_1} (\cdot,t)= (\rho_1-\rho_0) \frac{f_{x}(P(\cdot),t)}{\sqrt{1+f_{x}(P(\cdot),t)^2}} \delta_{\Gamma_f(t)},
\eeq
where $\Gamma_f(t):=\{(x,f(x,t))\,|\,x\in\bbR\}$ is the graph of $f(\cdot,t)$ and $P(x_1,x_2):=x_1$.  This 
and 
assuming that ${\rho_1-\rho_0}={2\pi}$ (which we do without loss)
now turns \eqref{1.3} into
\begin{equation} \lb{1.4}
u(x,f(x,t),t) =  PV \int_\bbR \sum_\pm \left[ \mp \frac{(f(x,t)\pm f(y,t), y-x)}{(x-y)^2+(f(x,t)\pm f(y,t))^2} 
\right] f_{x}(y,t) \, dy
\end{equation}
for points on the curve $\Gamma_f(t)$ at each time $t\ge 0$.
We note that the tangential component of $u(\cdot,t)$ at $\Gamma_f(t)$ (as a function on $\bbR^2$) is in fact discontinuous at all points $(x,f(x,t))$ with $f_x(x,t)\neq 0$ (it has a jump of size $(\rho_1-\rho_0)\frac{|f_{x}(x,t)|}{\sqrt{1+f_{x}(x,t)^2}} $ across the curve), but the normal component is continuous when $f_x(\cdot,t)$ is and it equals the normal component of \eqref{1.4}.  Since the tangential component does not affect the motion of the curve $\Gamma_f$  as a set, which fully determines the overall evolution, this discrepancy can be neglected.

If we want to recast this evolution in terms of a PDE for $f$, we can do that by adding  some multiple of the tangent vector $(1,f_x(x,t))$ to \eqref{1.4}, which again does not change the motion of $\Gamma_f$ provided $f(\cdot,t)$ is continuously differentiable.  We choose this multiple  so that the resulting vector's first coordinate vanishes.  That is,  we replace $u$ by
\begin{align*}
v(  x, f(x,t),t):=  u(x,f(x,t),t) 
 +   PV \int_\bbR  \sum_\pm  \frac{x-y}{(x-y)^2+(f(x,t)\pm f(y,t))^2} 
 dy \, (1,f_x(x,t)),
\end{align*}
whose first coordinate is
\[
- \frac 12 \, PV \int_\bbR   \sum_\pm \frac d{dy} \log \left[ (x-y)^2+(f(x,t)\pm f(y,t))^2 \right]
 dy =0.
\]
Hence 
$\Gamma_f$ for $f(\cdot,t)\ge 0$ moves with velocity $u$ precisely when $f_t(x,t)=v_2(x,f(x,t),t)$ for all $(x,t)\in\bbR\times\bbR^+$, which after the change of variables $y\leftrightarrow x-y$ becomes \eqref{1.5}.  Note also that this change of variables and \eqref{2.1} below show that the multiple of $(1,f_x(x,t))$ that we added above is in fact bounded when $\|f(\cdot,t)\|_{C^2_\gamma(\bbR)}<\infty$ (see the definition below), which holds in the setting of Theorem \ref{T.1.1}.  In the case of the whole plane, this argument similarly gives \eqref{1.13}.

\vskip 3mm
\noindent
{\bf Continuous norms of $f$ with power growth.}   
For  $k=0,1,\dots $ and $\gamma\in[0,1]$ let 
\begin{align*}
 \|g\|_{\dot C^\gamma(\bbR)}  := \sup_{x\neq y} \frac{|g(x)-g(y)|}{|x-y|^\gamma} 
 \qquad\text{and}\qquad
 \|g\|_{C^{k,\gamma}(\bbR)}  :=  \sum_{j=0}^k \|g^{(j)}\|_{L^\infty(\bbR)}+ \|g^{(k)}\|_{\dot C^{\gamma}(\bbR)}
\end{align*}
when $g\in L^2_{\rm loc}(\bbR)$
(note that then $C^{k,0}(\bbR)=W^{k,\infty}(\bbR)$),
and for $k\ge 1$ let
\begin{align*}
 \|g\|_{C^{k,\gamma}_\gamma(\bbR)}  := \|g'\|_{C^{k-1,\gamma}(\bbR)} 
+ \|g\|_{\ddot C^{1-\gamma}(\bbR)}  
 \qquad\text{and}\qquad
\|g\|_{C^{k}_\gamma(\bbR)}:=\|g'\|_{C^{k-1,0}(\bbR)} 
+ \|g\|_{\ddot C^{1-\gamma}(\bbR)} .
\end{align*}
All but the second of these are  seminorms that vanish on constant functions.  
Then 
\begin{align*}
\|g\|_{C^{k,\gamma'}_{\gamma'}(\bbR)}  \le \|g\|_{C^{k,\gamma}_\gamma(\bbR)} 
+ \|g'\|_{L^\infty(\bbR)} + 2\|g^{(k)}\|_{L^\infty(\bbR)} \le 4  \|g\|_{C^{k,\gamma}_\gamma(\bbR)} 
\end{align*}
again holds when $k\ge 1$ and $0\le\gamma'\le\gamma\le  1$,
and we have $\|g \|_{C^{k,0}_{0}(\bbR)}<\infty$  iff $ g'\in W^{k-1,\infty}(\bbR)$.  Finally, we note that when $\gamma\in(0,\frac 12]$ (which we assume here) and $k\ge 1$, then
\beq\lb{2.0a}
\|g\|_{C^{k-1,\gamma}_\gamma(\bbR)}\le C_{k,\gamma} \|g\|_{\tilde H^{k}_\gamma(\bbR)}
\eeq
holds for all $g\in L^2_{\rm loc}(\bbR)$, with some constant $C_{k,\gamma}<\infty$.

\vskip 3mm
\noindent
{\bf Basic bounds on $f$ and \eqref{1.5}.}    
We start the proof of Theorem \ref{T.1.1} by showing that the right-hand side of \eqref{1.5} remains bounded as long as $\|f(\cdot,t)\|_{\tilde H^3_\gamma(\bbR)}<\infty$.
Since most of our arguments  apply at some fixed $t$, we will mostly drop $t$ from the notation for the sake of simplicity.   We then denote $g':=\partial_x g$, and all (semi)norms below will involve functions defined on $\bbR$ unless we specify otherwise.

We first claim that
\beq\lb{2.1}
\left| PV \int_{S\cup(-S)} \frac{y}{y^2+(f(x)\pm f(x-y))^2} \, dy \right| \le C \|f\|_{C^2_\gamma} .
\eeq
for any $S\subseteq[0,\infty)$.  The left-hand side is clearly  bounded by
\beq\lb{2.4a}
\int_S \frac{y|f(x+y)- f(x-y)| \, |f(x+y)+ f(x-y) \pm 2f(x)| }{[y^2+(f(x)\pm f(x+y))^2][y^2+(f(x)\pm f(x-y))^2]} \, dy .
\eeq
When $\pm$ is $-$, from $|f(x+y)- f(x-y)|\le |f(x+y)- f(x)|+|f(x-y)- f(x)|$ we see that 
\[
y|f(x+y)- f(x-y)| \le  [y^2+(f(x) - f(x+y))^2]^{1/2}[y^2+(f(x)- f(x-y))^2]^{1/2},
\]
so \eqref{2.4a} is indeed no more than 
\[
2 \|f''\|_{L^\infty} 
+  2\|f\|_{\ddot C^{1-\gamma}}  \int_1^\infty y^{-1-\gamma}\,dy 
\le C \|f\|_{C^2_\gamma} .
\]
 When $\pm$ is $+$, the same estimate holds for
 \[
 \int_S \frac{y|f(x+y)- f(x-y)| \, |f(x+y)+ f(x-y) - 2f(x)| }{[y^2+(f(x)+ f(x+y))^2][y^2+(f(x)+ f(x-y))^2]} \, dy,
 \] 
 while the remainder of \eqref{2.4a}  is bounded by
 \[
 \|f'\|_{L^\infty} \int_S \frac{ 4 |f(x)| }{y^2+f(x)^2} \, dy \le C  \|f'\|_{L^\infty}
 \]
because $f>0$.  This also yields \eqref{2.1} in this case.
We also have
\begin{align*}
\left|  \int_{-1}^1 \frac{y\, (g'(x)- g'(x-y))}{y^2+(f(x)\pm f(x-y))^2} \, dy \right|
\le C\|g'\|_{\dot C^{\gamma}},
\end{align*}
and
integration by parts in $y$ yields
\begin{align}
 PV & \int_{|y|\ge 1}  \frac{y\, g'(x-y)}{y^2+(f(x)\pm f(x-y))^2} \, dy  
 =  \frac{g(x-1)-g(x)}{1+(f(x)\pm f(x-1))^2} + \frac{g(x+1)-g(x)}{1+(f(x)\pm f(x+1))^2}  \notag
\\ & +  \int_{|y|\ge 1} (g(x-y)-g(x)) \frac{ (f(x)\pm f(x-y))^2 \pm 2yf'(x-y)(f(x)\pm f(x-y)) -y^2}{[y^2+(f(x) \pm f(x-y))^2]^2} \, dy.  \lb{2.1a}
\end{align}
The last fraction is bounded by $(1+ \|f'\|_{L^\infty})|y|^{-2}$, so
\[
\left|   PV  \int_{|y|\ge 1}  \frac{y\, g'(x-y)}{y^2+(f(x)\pm f(x-y))^2} \, dy  \right| 
\le C (1+ \|f'\|_{L^\infty}) (\|g'\|_{L^\infty} + \|g\|_{\ddot C^{1-\gamma}}).
\]
The above bounds thus imply
\begin{align*}
 \left|  PV \int_{\bbR} \frac{y\,  g'(x-y)}{y^2+(f(x)\pm f(x-y))^2} \, dy \right| \le C  (1+ \|f'\|_{L^\infty}) \|g\|_{C^{1,\gamma}_\gamma},
\end{align*}
which together with \eqref{2.1} yields
\beq \lb{2.70}
 \left|  PV \int_{\bbR} \frac{y\, ( g'(x)\pm g'(x-y))}{y^2+(f(x)\pm f(x-y))^2} \, dy \right| \le C (1+ \|f'\|_{L^\infty}) \|g\|_{C^{1,\gamma}_\gamma} + C \|f\|_{C^2_\gamma}  \|g'\|_{L^\infty}.
\eeq
In particular, 
\beq \lb{2.70a}
 \left|  PV \int_{\bbR} \frac{y\, ( f'(x)\pm f'(x-y))}{y^2+(f(x)\pm f(x-y))^2} \, dy \right| \le C (1+ \|f'\|_{L^\infty}) \|f\|_{C^2_\gamma} .
\eeq

Next we derive a pair of basic bounds on $f\ge 0$ in the region where it is small.  Note that $|f'(z)|\ge \frac 12 {|f'(x)|}$ whenever $|z-x|\le \frac  {|f'(x)|} {2 \|f''\|_{L^\infty}}$, so $f\ge 0$ forces
\[
\frac  {|f'(x)|} {2 \|f''\|_{L^\infty}} \frac  {|f'(x)|} {2} \le |f(x)|.
\]
Hence  
\beq \lb{2.5a}
|f'(x)| \le 2 \|f''\|_{L^\infty}^{1/2} \sqrt{f(x)} \le (1+\|f''\|_{L^\infty}) \sqrt{f(x)},
\eeq
and then for all $|y|\le \sqrt{f(x)}$ we have
\[
|f'(x-y)| \le (1+ 2\|f'\|_{C^1}) \min\left\{\sqrt{f(x)},1 \right\} \qquad\text{and}\qquad f(x-y) \le 2(1+ \|f'\|_{C^1}) f(x).
\]
When $|y|\ge \sqrt{f(x)}$, from \eqref{2.5a} we obtain
\[
|f'(x-y)| \le (1+ 2\|f'\|_{C^1})  \min \{|y|,1\} \qquad\text{and}\qquad f(x-y) \le 2(1+ \|f'\|_{C^1}) y^2 .
\]
So 
we have
\beq \lb{2.7}
\begin{split}
|f'(x-y)| &\le 2(1+ \|f'\|_{C^1}) \max\left\{ y, \sqrt{ f(x)} \right\},
\\  f(x-y) &\le 2(1+ \|f'\|_{C^1}) \max\left\{ y^2, f(x) \right\}
\end{split}
\eeq
for all $x,y\in\bbR$ when $f\ge 0$.  We will use these two estimates extensively below.

\vskip 3mm
\noindent
{\bf A priori estimates for local norms of $f_{xxx}$.} 
 In this and the next four sections, we will assume that $f\ge 0$ and  it is smooth on 
 $\bbR \times[0,T]$ for some $T>0$.
We will eventually apply the {\it a priori} arguments from these sections to  solutions of a mollified version \eqref{7.1} of \eqref{1.5} in Section \ref{S7}, and we will show in Section \ref{S10} that the latter exist and converge to a solution of \eqref{1.5} as the mollification parameter $\eps\to 0$.
We now fix some  $h_0\in C^2(\bbR)$ such that $\chi_{[-1,1]} \le h_0 \le \chi_{[-4,4]}$ and $\max\{\|h'_0\|_{L^\infty}, \|h''_0\|_{L^\infty} \} \le  1$.  Then
\beq\lb{2.0}
\| f_{xxx}(\cdot,t)\|_{\tilde L^2(\bbR)} \le  \sup_{x_0\in\bbR}   \left\| \sqrt{h_0(\cdot-x_0)}\,  f_{xxx}(\cdot,t) \right\|_{L^2(\bbR)}
\eeq
clearly holds for all $t\in[0,T]$, so to estimate $\|f(\cdot,t)\|_{\tilde H^3_\gamma(\bbR)}$, it will mainly be important to obtain appropriate uniform-in-$x_0$ bounds on the right-hand side of \eqref{2.0}.  We do this via the estimate \eqref{2.3} below. We note that an analogous estimate for the left-hand side of \eqref{2.0} seems to {\it fail}, which is why we have to introduce the more regular cut-off functions $h_0(\cdot-x_0)$.
Let us also fix an arbitrary $x_0\in\bbR$ and denote $h:=h_0(\cdot -x_0)$ in the arguments below.   

To estimate the right-hand side of \eqref{2.0}, from \eqref{1.5} (and after dropping $t$) we obtain
\beq\lb{2.0b}
\frac d{dt}  \left\| \sqrt{h}\, f''' \right\|_{L^2}^2 
 = 2( I_1^+ + I_1^- + I_4^+ + I_4^- ) + 6(I_2^+ + I_2^- + I_3^+ + I_3^-)  
\eeq
for smooth $f$, where with $A^\pm(x,y):= [y^2+(f(x)\pm f(x-y))^2]^{-1}$ we have
\begin{align*}
I_1^\pm & := 
\int_\bbRr  h(x) f'''(x) \, PV \int_\bbR \frac{y\, (f''''(x)\pm f''''(x-y))}{y^2+(f(x)\pm f(x-y))^2} \, dydx ,
\\  I_2^\pm & := 
\int_\bbRr h(x)  f'''(x) \, PV \int_\bbR  y\, (f'''(x)\pm f'''(x-y)) \, \partial_x A^\pm(x,y) \, dydx ,
\\  I_3^\pm & := 
\int_\bbRr  h(x) f'''(x) \, PV \int_\bbR  y\, (f''(x)\pm f''(x-y)) \, \partial_{x}^2 A^\pm(x,y) \, dydx ,
\\  I_4^\pm & := 
\int_\bbRr  h(x) f'''(x) \, PV \int_\bbR  y\, (f'(x)\pm f'(x-y)) \, \partial_{x}^3 A^\pm(x,y) \, dydx .
\end{align*}
Note that the PV integrals above all converge as $|y|\to\infty$ (and all but $I_1^\pm$ do so absolutely).  As for convergence  at $y=0$, we see that the integrands are all bounded when $\pm$ is $-$ and $f$ is smooth enough, and this is clearly also true for the ones with $+$ when also $f(x)>0$.  When $f(x)=0$, then $f'(x)=0$ because $f\ge 0$, so $f'(x-y)=O(|y|)$ and $f(x-y)=O(y^2)$. This shows that the inside integrand for $I_2^+$ is still bounded, while the ones for $I_1^+$, $I_3^+$, and $I_4^+$ are  $\frac Cy+O(1)$.  Moreover,  this $C$ is non-zero only when $f''(x)\neq 0$ ($=f(x)$).  Therefore ``PV'' can be removed from $I_2^\pm$, $I_3^-$, and $I_4^-$, and it is only needed in $I_3^+$, and $I_4^+$ at countably many points, which makes it irrelevant (in fact, we will eventually apply the arguments below to $f>0$, 
so then ``PV'' will only be needed in $I_1^\pm$ as $|y|\to\infty$ anyway).    As a result, there will be  few PV integrals in our analysis of the terms $I_2^\pm$, $I_3^\pm$, and $I_4^\pm$.  However, analysis of the integrands in $I_3^+$ and $I_4^+$ when $0<f(x)\ll 1$ (e.g., when $x$ is near a point where $f$ vanishes) will turn out to be the most challenging task in our derivation of  a priori estimates for \eqref{1.5}.

We bound the above eight integrals in the next four sections, and obtain estimates \eqref{2.10}, \eqref{2.11}, \eqref{2.10c}, \eqref{2.14a}, and \eqref{2.66} that collectively yield the a priori estimate
\beq\lb{2.3}
\frac d{dt}  \left\| \sqrt{h}\, f''' \right\|_{L^2}^2 
\le C(1+  \|f\|_{C^{2,\gamma}_\gamma}^4 ) \, (\|f'\|_{\tilde L^2 }^2 + \|f''\|_{\tilde L^2 }^2 + \|f'''\|_{\tilde L^2 }^2) 
\eeq
(which holds uniformly in $x_0$).
Note that we do not prove \eqref{2.3} with the left-hand side multiplied by $-1$, due to \eqref{2.10} below.  In fact, that inequality would hold in the unstable heavier-fluid-above scenario, but \eqref{2.3} then fails as stated and causes ill-posedness for \eqref{1.5} even in $H^3(\bbR)$ in that setting \cite{CorGanMuskat}.
With \eqref{2.3} in hand, we finish the proof of Theorem \ref{T.1.1} in Sections \ref{S7}--\ref{S10} as described at the end of the introduction.



\section{Estimates on  $I_1^\pm$} \lb{S3}

Let us first estimate $I_1^\pm$, denoting $g:=f'''$.  For later reference, we will do this in terms of
\beq\lb{2.4g}
 \|g\|_{\tilde L^2_{x_0}(\bbR)}  := \left[\int_\bbR g(x)^2 \min\{1,|x-x_0|^{-2} \} dx \right]^{1/2}
\qquad (\le 2 \|g\|_{\tilde L^2(\bbR)})
\eeq
rather than $\|g\|_{\tilde L^2}$ (we use the latter in the following sections).
We have $I_1^\pm=J_1^\pm \pm J_2^\pm$,
where
\begin{align*}
J_1^\pm & := \int_\bbRr h(x)  g(x) \, PV \int_\bbR \frac{y\, g'(x)}{y^2+(f(x)\pm f(x-y))^2} \, dydx ,
\\ J_2^\pm &  :=  \int_\bbRr h(x)  g(x) \, PV \int_\bbR \frac{(x-y)\, g'(y)}{(x-y)^2+(f(x)\pm f(y))^2} \, dydx .
\end{align*}
Integration by parts in $x$ yields
\beq \lb{2.4}
\begin{split}
J_1^\pm = & \int_\bbRr  h(x)  g(x)^2 \, PV \int_\bbR \frac{y\, (f(x)\pm f(x-y)) (f'(x)\pm f'(x-y))}{[y^2+(f(x)\pm f(x-y))^2]^2} \, dydx
\\ & - \frac 12 \int_\bbRr  h'(x)  g(x)^2 \, PV \int_\bbR \frac{y}{y^2+(f(x)\pm f(x-y))^2} \, dydx =: K_1^\pm + K_2^\pm.
\end{split}
\eeq
From \eqref{2.1} we see that
\beq\lb{2.4b}
|K_2^\pm| \le C\|f\|_{C^2_\gamma} \left\| \sqrt{|h'|}\, g \right\|_{L^2}^2 \le C\|f\|_{C^2_\gamma} \| g\|_{\tilde L^2_{x_0}}^2.
\eeq


As for $K_1^\pm$, we have
\beq \lb{2.5}
|K_1^\pm| \le C \left( \|f'\|_{L^\infty} + \|M^\pm \|_{L^\infty} \right) \| g\|_{\tilde L^2_{x_0}}^2,
\eeq
where
\[
M^\pm(x): = PV \int_{-1}^1 \frac{y\, (f(x)\pm f(x-y)) \, (f'(x)\pm f'(x-y))}{[y^2+(f(x)\pm f(x-y))^2]^2} \, dy .
\]
Here we estimated both $|y|$ and $|f(x)\pm f(x-y)|$ by $[y^2+(f(x)\pm f(x-y))^2]^{1/2}$, and we will do so multiple times below (however, at other times we will estimate $|f(x)\pm f(x-y)|$ via  \eqref{2.7}, including via its first line when $\pm$ is $-$).
To estimate $M^-$, we note that from
 \[
| f'(x+y)+f'(x-y)-2f'(x)| \le 2\|f''\|_{\dot C^{\gamma}} |y|^{1+\gamma}
 \]
and
\beq\lb{2.6a}
\frac {ab}c- \frac {a'b'}{c'} = \frac {(a-a')b}{c}+\frac{a'(b-b')}{c}+\frac{a'b'(c'-c)}{cc'}
\eeq
we obtain
\begin{align*}
& \left| \frac{ (f(x) - f(x-y)) \, (f'(x) - f'(x-y))}{[y^2+(f(x) - f(x-y))^2]^2} - \frac{ (f(x+y) - f(x)) \, (f'(x+y) - f'(x))}{[y^2+(f(x) - f(x+y))^2]^2} \right|
\\  & \qquad \le  \frac {C \|f''\|_{L^\infty}^2 |y|^3 } {y^4}   + \frac {C  \|f'\|_{ C^{1,\gamma}}^2  |y|^{2+\gamma}} {y^4}   
+ \frac {C  \|f''\|_{L^\infty}^2 |y|^3} {y^4} 
 \le  \frac {C \|f'\|_{C^{1,\gamma}}^2 |y|^{2+\gamma}} {y^4} ,
\end{align*}
for $|y|\le 1$ (we again used $| f(x+y)+f(x-y)-2f(x)| \le \|f''\|_{L^\infty}y^2$).  
Hence
\beq\lb{2.4d}
\|M^- \|_{L^\infty} \le \int_0^1 \frac {C \|f'\|_{C^{1,\gamma}}^2 y^{3+\gamma}} {y^4} \, dy = C \|f'\|_{C^{1,\gamma}}^2 .
\eeq

We can bound $M^+$ via \eqref{2.6a}
and \eqref{2.7}, which in particular imply
\[
|f(x+y)-f(x-y)|\le  4(1+ \|f'\|_{C^1})\sqrt{ f(x)}\,  |y| 
\]
for $|y|\le\sqrt{f(x)}$, and yield for these $y$
\begin{align*}
& \left| \frac{ (f(x) + f(x-y)) \, (f'(x) + f'(x-y))}{[y^2+(f(x) + f(x-y))^2]^2} - \frac{ (f(x) + f(x+y)) \, (f'(x) + f'(x+y))}{[y^2+(f(x) + f(x+y))^2]^2} \right|
\\  & \hskip 5cm  \le  \frac {C(1+ \|f'\|_{C^1}^2) f(x) |y|} {\max\{|y|,f(x)\}^4}  
 \le  \frac {C(1+ \|f'\|_{C^1}^2) f(x) } {\max\{|y|,f(x)\}^3} .
\end{align*}
For $|y|\ge \sqrt{f(x)}$ we instead obtain 
\[
\left| \frac{ (f(x) + f(x-y)) \, (f'(x) + f'(x-y))}{[y^2+(f(x) + f(x-y))^2]^2} \right| \le  \frac {C(1+ \|f'\|_{C^1}^2) |y|^3 }{ |y|^4},
\]
 so it follows that
\beq\lb{2.4e}
|M^+(x)| \le C(1+ \|f'\|_{C^1}^2) \int_{0}^1 \left( \frac{yf(x)}{\max\{y,f(x)\}^3} +1\right) dy  \le C(1+ \|f'\|_{C^1}^2) .
\eeq
This, \eqref{2.4b}, 
\eqref{2.5}, and \eqref{2.4d} thus prove 
\beq \lb{2.8}
|J_1^\pm| \le C(1+ \|f\|_{C^{2,\gamma}_\gamma}^2 ) \|g\|_{\tilde L^2_{x_0}}^2 .
\eeq

Next we estimate $J_2^\pm$.  Integration by parts in $y$ yields
\beq \lb{2.91}
J_2^\pm   =  \int_\bbRr h(x)  g(x) \, PV \int_\bbR \frac{(x-y)\, \partial_y (g(y)-g(x))}{(x-y)^2+(f(x)\pm f(y))^2} \, dydx = K_3^\pm +  K_4^\pm ,
\eeq
where
\begin{align*}
K_3^\pm &: = \int_\bbRr h(x)  g(x) \, PV \int_\bbR \frac{ g(x)-g(y)}{(x-y)^2+(f(x)\pm f(y))^2} \, dydx ,
\\ K_4^\pm &: = - 2\int_\bbRr h(x)  g(x)  \int_\bbR \frac { (g(x)-g(y)) (f(x)\pm f(y)) \big[ f(x)\pm f(y)\pm   f'(y)(x-y) \big] } {[(x-y)^2+(f(x)\pm f(y))^2]^2} \, dydx 
\end{align*}
(note that the term obtained when $\partial_y$ is applied to $x-y$ in the numerator is $-K_3^\pm$, so $K_4^\pm$ is in fact the  term obtained when $\partial_y$ is applied to the denominator minus $2K_3^\pm$).
Symmetrization now shows that  $K_3^\pm:=\frac 12 (L_1^\pm+ L_2^\pm)$, where
\begin{align*}
L_1^\pm & := \int_{\bbR^2} \frac{h(y)( g(x)-g(y))^2}{(x-y)^2+(f(x)\pm f(y))^2} \, dydx ,
\\ L_2^\pm & :=  \int_{ \bbR^2}   \frac{(h(x)-h(y)) g(x) ( g(x)-g(y))}{(x-y)^2+(f(x)\pm f(y))^2} \, dydx
\\ & = \int_{ \bbR} g(x)^2 \, PV \int_{ \bbR}  \frac{h(x)-h(y) }{(x-y)^2+(f(x)\pm f(y))^2} \, dydx
\\ & = \int_{ \bbR} g(x)^2 \, PV \int_{ \bbR}  \frac{h(x)-h(x-y) }{y^2+(f(x)\pm f(x-y))^2} \, dydx
\end{align*}
and the third equality follows also by symmetrization.  Note that symmetrization here  uses
\[
\int_\bbR \, PV \int_\bbR F(x,y)\,dxdy = \lim_{\eps\to 0} \int_{|x-y|\ge \eps} F(x,y) \,dxdy = \lim_{\eps\to 0} \int_{|x-y|\ge \eps} \frac{F(x,y)+F(y,x)}2 \,dxdy.
\]
We then obviously have 
\beq\lb{2.90}
L_1^+ - L_1^-\le 0 \le  L_1^\pm.
\eeq
On the other hand, for $|x-x_0|\le 5$ we can estimate the inside integral in $L_2^\pm$ by $C (1+\|f\|_{C^2_\gamma}) $ using 
$|h(x)-h(x-y)-h'(x)y|\le y^2$ for $|y|\le 1$ and then \eqref{2.1} with $S=[-1,1]$, while for $|x-x_0|\ge 5$ this integral is bounded by $\frac 8{(|x-x_0|-4)^2}$ because $\supp \, h\subseteq [x_0-4,x_0+4]$.  It follows that
\[
|L_2^\pm|\le C(1+\|f\|_{C^2_\gamma})  \| g\|_{\tilde L^2_{x_0}}^2,
\]
and therefore
\beq \lb{2.9}
\begin{split}
K_3^\pm & \ge  - C(1+\|f\|_{C^2_\gamma})  \| g\|_{\tilde L^2_{x_0}}^2,
\\ K_3^+ - K_3^- & \le C(1+\|f\|_{C^2_\gamma})  \| g\|_{\tilde L^2_{x_0}}^2.
\end{split}
\eeq

We now write $K_4^-= - L_3^- + L_4^- + L_5^-$, where
\begin{align*}
L_3^- &: = 2\int_\bbRr h(x)  g(x)^2 \, PV  \int_\bbR \frac {  (f(x)- f(y)) \big[ f(x) - f(y) -   f'(y)(x-y) \big]  } {[(x-y)^2+(f(x) - f(y))^2]^2} \, dydx ,
\\ L_4^- &: =  \frac 12\int_\bbRr h(x)  g(x) \, PV \int_\bbR \frac {  g(y) (f(x)- f(y)) ( f''(x) + f''(y)) \min\{(x-y)^2,1\}   } {[(x-y)^2+(f(x) - f(y))^2]^2} \, dydx ,
\\ L_5^- &: = 2\int_\bbRr h(x)  g(x) \, PV \int_\bbR  \frac {g(y) (f(x)- f(y)) D(x,y)} {[(x-y)^2+(f(x) - f(y))^2]^2}\, dydx ,
\end{align*}
where
\[
D(x,y):=   f(x) - f(y) -   f'(y)(x-y) - \frac   {f''(x) + f''(y)} 4 \min\{ (x-y)^2,1\} .
\]
After changing variables $y\leftrightarrow x-y$, we can treat $L_3^-$ similarly to $K_1^-$, writing 
\[
(f(x)- f(x-y)) [ f(x) - f(x-y) -  f'(x-y)y] = - (f(x+|y|)- f(x)) [ f(x) - f(x+|y|) +  f'(x+|y|)|y|] 
\]
for $y\in (-1,0)$ and then also estimating
\[
\big| [ f(x) - f(x-y) -  f'(x-y)y] - [ f(x) - f(x+y) +  f'(x+y)y] \big| = \frac 12 |f''(z_1)-f''(z_2)| y^2 \le \|f''\|_{\dot C^{\gamma}} y^{2+\gamma}
\]
for $y\in(0,1)$, with some $z_1\in[x-y,x]$ and $z_2\in[x,x+y]$.
We thus obtain
\[
|L_3^- |\le C(1+ \|f\|_{C^{2,\gamma}_\gamma}^2)  \| g\|_{\tilde L^2_{x_0}}^2.
\]

On the other hand, symmetrization and 
\beq \lb{2.9b}
 | g(x) g(y)|\le \frac {g(x)^2 + g(y)^2} 2
 \eeq
  show that
\begin{align*}
|L_4^-| &  = \left| \frac 14\int_\bbRr   g(x)  \int_\bbR \frac {  g(y) (h(x)-h(y)) (f(x)- f(y)) ( f''(x) + f''(y))\min\{ (x-y)^2,1\}  } {[(x-y)^2+(f(x) - f(y))^2]^2} \, dydx \right| 
\\ & \le \frac 14\int_\bbRr   g(x)^2  \int_\bbR \frac {  |h(x)-h(y)| \, |f(x)- f(y)| \, | f''(x) + f''(y)| \min\{ (x-y)^2,1\}   } {[(x-y)^2+(f(x) - f(y))^2]^2} \, dydx.
\end{align*}
The last integral in $y$ is bounded by $C\|f''\|_{L^\infty}$ when $|x-x_0|\le 5$ and by  $\frac {16\|f''\|_{L^\infty} }{(|x-x_0|-4)^3}$ when $|x-x_0|\ge 5$, so
\[
|L_4^- |\le C(1+ \|f''\|_{L^\infty})  \| g\|_{\tilde L^2_{x_0}}^2.
\]
And since we clearly have
\[
|4D(x,y)| = | 2f''(z) -  f''(x) - f''(y)| (x-y)^2 \le  4 \|f''\|_{C^{\gamma}} |x-y|^{2+\gamma} 
\]
with some $z$ between $x$ and $y$ when $|x-y|\le 1$, from \eqref{2.9b} we again see that
\[
|L_5^-| \le C(1+ \|f'\|_{C^{1,\gamma}})  \| g\|_{\tilde L^2_{x_0}}^2.
\]
Therefore we proved 
\beq \lb{2.9c}
|K_4^- |\le C(1+ \|f\|_{C^{2,\gamma}_\gamma}^2)  \| g\|_{\tilde L^2_{x_0}}^2.
\eeq

Next write $K_4^+= - L_3^+ - L_4^+ + L_5^+$, where
\begin{align*}
L_3^+ &: = 2\int_\bbRr h(x)  g(x) \,  \int_\bbR \frac { (g(x)-g(y)) (f(x)+ f(y))^2  } {[(x-y)^2+(f(x) + f(y))^2]^2} \, dydx ,
\\ L_4^+ &: =  2\int_\bbRr h(x)  g(x)^2 \,  \int_\bbR \frac {  (f(x) + f(x-y))    f'(x-y)y  } {[y^2+(f(x) + f(x-y))^2]^2} \, dydx ,
\\ L_5^+ &: =  2\int_\bbRr h(x)  g(x) \,  \int_\bbR \frac { g(y)  (f(x) + f(y))    f'(y)(x-y)  } {[(x-y)^2+(f(x) + f(y))^2]^2} \, dydx ,
\end{align*}
and in $L_4^+$ we changed variables $y\leftrightarrow x-y$.  We can now estimate $L_3^+$ in the same way as $K_3^+$, although this time  a simpler argument can be used to estimate
the term analogous to $L_2^+$.
That is because after changing variables $y\leftrightarrow x-y$, this term becomes
\beq\lb{2.9a}
\int_{ \bbR} g(x)^2  \int_{ \bbR}  \frac{(h(x)-h(x-y) )(f(x)+ f(x-y))^2 }{[y^2+(f(x)+ f(x-y))^2]^2} \, dydx,
\eeq
and \eqref{2.7} shows that the absolute value of the inside integral is bounded by
\[
C (1+ \|f'\|_{C^1}^2) \left( \int_{-1}^1 \frac{  \max\{ y^2,  f(x) \}^{2} |y|  }{\max\{|y|,f(x)\}^4 } \, dy + \int_{|y|\ge1} \frac{dy} {y^{2}}  \right)
\le C (1+ \|f'\|_{C^1}^2)
\]
when $|x-x_0|\le 5$.  Hence similarly to  \eqref{2.9} we obtain
\[
L_3^+  \ge  - C(1+\|f'\|_{C^1}^2)  \| g\|_{\tilde L^2_{x_0}}^2
\]

The argument bounding $K_1^+$ shows  also that
\[
|L_4^+|\le C(1+ \|f'\|_{C^1}^2)  \| g\|_{\tilde L^2_{x_0}}^2 .
\]
Finally, symmetrization again shows that
\begin{align*}
 L_5^+  = &  \int_\bbRr h(x) g(x) \,  \int_\bbR \frac { g(y)  (f(x) + f(y))   ( f'(y)-f'(x)) (x-y)  } {[(x-y)^2+(f(x) + f(y))^2]^2} \, dydx 
 \\   & +  \int_\bbRr g(x) \,  \int_\bbR \frac { g(y) (h(x)-h(y)) (f(x) + f(y))   f'(x) (x-y)  } {[(x-y)^2+(f(x) + f(y))^2]^2} \, dydx,
\end{align*}
so  \eqref{2.9b} and a change of  variables yield
\begin{align*}
|L_5^+|  \le & \int_{x_0-5}^{x_0+5}  g(x)^2  \int_\bbR \frac {  |f(x) + f(x-y)|   \, | f'(x)-f'(x-y)| \, |y|  } {[y^2+(f(x) + f(x-y))^2]^2} \, dydx 
\\ & + \int_{|x-x_0|\ge 5}  g(x)^2  \int_{ x-x_0- 4}^{ x-x_0+ 4}
\frac {  |f(x) + f(x-y)|   \, | f'(x)-f'(x-y)| \, |y|  } {[y^2+(f(x) + f(x-y))^2]^2} \, dydx
 \\   & +  \int_\bbRr g(x)^2 \,  \int_\bbR \frac {  |h(x)-h(x-y)| (f(x) + f(x-y))   (|f'(x)|+|f'(x-y)|) |y|  } {[y^2+(f(x) + f(x-y))^2]^2} \, dydx.
\end{align*}
The integral in $y$ in the second term is bounded by $\frac {16\|f'\|_{L^\infty} }{(|x-x_0|-4)^2}$.  In the first term, the integral in $y$ over $\bbR\setminus[-1,1]$ is bounded by $C \|f'\|_{L^\infty}$, and the same bound works for the integral over $[-1,1]$ when $f(x)\ge 1$.  When $f(x)\le 1$, then \eqref{2.7} shows that the latter integral is bounded by
\[
C  (1+ \|f'\|_{C^1}^2) \int_{-1}^1 \frac {  \max\{y^2,f(x)\}  \, y^2  } {\max\{|y|,f(x)\}^4} \, dy
\le C (1+ \|f'\|_{C^1}^2 ).
\]
The third term in the bound on $|L_5^+|$ can be estimated in the same way, so the above bounds yield
\[
K_4^+ \le C(1+ \|f'\|_{C^1}^2)  \| g\|_{\tilde L^2_{x_0}}^2  .
\]

This combines with \eqref{2.9} and \eqref{2.9c} to yield
\[
J_2^+-J_2^-\le C(1+  \|f\|_{C^{2,\gamma}_\gamma}^2)  \| g\|_{\tilde L^2_{x_0}}^2,
\]
and adding  \eqref{2.8} to this finally concludes that
\beq \lb{2.10}
I_1^+ + I_1^- \le C(1+  \|f\|_{C^{2,\gamma}_\gamma}^2 ) \| f'''\|_{\tilde L^2_{x_0}}^2   .
\eeq


\section{Estimates on  $I_2^\pm$} \lb{S4}

We note that \eqref{2.4} and a change of variables show that
\[
I_2^\pm = \pm 2 K_1^\pm \mp 2 \int_\bbRr h(x) f'''(x) \, PV  \int_\bbR \frac { f'''(y) (x-y)  (f(x) \pm f(y))   ( f'(x)\pm f'(y))  } {[(x-y)^2+(f(x) \pm f(y))^2]^2} \, dydx .
\]
Symmetrization shows that the second term equals
\[
\int_\bbRr f'''(x)   \int_\bbR \frac {(h(x)  - h(y)) f'''(y) (x-y)  (f(x) \pm f(y))   ( f'(x)\pm f'(y))  } {[(x-y)^2+(f(x) \pm f(y))^2]^2} \, dydx.
\]
The absolute value of this integral is bounded by 
\beq\lb{2.10a}
\int_\bbR f'''(x)^2  \int_\bbR \frac {|h(x)  - h(y)| \, |x-y| \, |f(x) \pm f(y)| \,  | f'(x)\pm f'(y)|  } {[(x-y)^2+(f(x) \pm f(y))^2]^2} \, dydx
\eeq
When we replace $\pm$ by $-$,  the inside integrand is bounded by $\|f'\|_{C^1} \min\{1,\frac 1{(|x-x_0|-4)^2}\}$, 
so this, \eqref{2.5}, and \eqref{2.4d} yield
\beq \lb{2.11}
|I_2^-| \le C \|f'\|_{C^{1,\gamma}}^2  \|f'''\|_{\tilde L^2}^2  .
\eeq

When we replace $\pm$ by $+$, then \eqref{2.10a} becomes (after changing variables)
\beq\lb{2.10b}
\int_\bbR f'''(x)^2  \int_\bbR \frac {|h(x)  - h(x-y)| \, |y| \,  |f(x) + f(x-y)|  \, | f'(x) + f'(x-y)|  } {[y^2+(f(x) + f(x-y))^2]^2} \, dydx
\eeq
The inside integral is bounded  in the same way as \eqref{2.9a}, so from that bound, \eqref{2.5}, and \eqref{2.4e} we obtain
\beq \lb{2.10c}
|I_2^+| \le C(1+ \|f'\|_{C^1}^2)  \| f'''\|_{\tilde L^2}^2   .
\eeq


\section{Estimates on  $I_3^\pm$} \lb{S5}

We can write $I_3^-=2 (J_3^- -  J_4^-)$, where  
\begin{align*}
J_3^- & := \int_\bbRr h(x) f'''(x) \int_\bbR  \frac {y\, (f''(x) - f''(x-y)) (f'(x) - f'(x-y))^2 [3(f(x) - f(x-y))^2 -  y^2] } { [y^2+(f(x) - f(x-y))^2]^3 }  \, dydx ,
\\ J_4^- & :=  \int_\bbRr h(x) f'''(x) \int_\bbR  \frac {y\, (f''(x) - f''(x-y))^2 (f(x) - f(x-y)) } { [y^2+(f(x) - f(x-y))^2]^2 } \, dydx .
\end{align*}
Since $f''(x) - f''(x-y)= y \int_0^1 f'''(x-sy) ds$ we obtain
\beq\lb{2.11a}
\begin{split}
|J_3^-| 
& \le C \|f'\|_{C^1}^2  \int_{ \bbR^2 \times [0,1]} h(x)  |f'''(x)| \, |f'''(x-sy)| \min\{1,|y|^{-2} \} \, dxdyds 
\\ & \le C \|f'\|_{C^1}^2  \|f'''\|_{\tilde L^2}^2 \int_{\bbR \times [0,1]}   \min\{1,|y|^{-2} \} \, dyds 
 \le C \|f'\|_{C^1}^2 \|f'''\|_{\tilde L^2}^2 .
\end{split}
\eeq
We use the same method to estimate $J_4^-$,  bounding the second factor $f''(x) - f''(x-y)$ via $|f''(x) - f''(x-y)|\le \|f''\|_{\dot C^{\gamma}} |y|^{\gamma}$ for 
$|y|\le 1$.
This yields
\begin{align*}
|J_4^-|   \le C \|f\|_{C^{2,\gamma}_\gamma}^2   \|f'''\|_{\tilde L^2}^2 \int_{\bbR \times [0,1]}   \min\{|y|^{\gamma-1},|y|^{-1-\gamma} \} \, dyds  
\le C  \|f\|_{C^{2,\gamma}_\gamma}^2   \|f'''\|_{\tilde L^2}^2,
\end{align*}
and it follows that
\beq \lb{2.12}
|I_3^-| \le C  \|f\|_{C^{2,\gamma}_\gamma}^2   \|f'''\|_{\tilde L^2}^2 .
\eeq

We now turn to the term $I_3^+$, whose treatment will be much more complicated.  First we split $I_3^+=I_3'+I_3''+I_3'''$, where with $R_x:=\bbR\setminus[-\sqrt{f(x)}, \sqrt{f(x)}\,]$ we have
\begin{align*}
 I_3' & :=  \int_\bbRr \chi_{(1,\infty)}(f(x))\, h(x) f'''(x) \int_{\bbR}   y\, (f''(x)+ f''(x-y)) \, \partial_{x}^2 A^+(x,y) \, dydx ,
\\ I_3'' & := \int_\bbRr \chi_{[0,1]}(f(x))\, h(x) f'''(x)  \, PV  \int_{R_x}  y\, (f''(x)+ f''(x-y)) \, \partial_{x}^2 A^+(x,y) \, dydx,
\\ I_3''' & :=  \int_\bbRr \chi_{[0,1]}(f(x))\, h(x) f'''(x) \int_{-\sqrt{f(x)}}^{\sqrt{f(x)}}   y\, (f''(x)+ f''(x-y)) \, \partial_{x}^2 A^+(x,y) \, dydx .
\end{align*}
The inside integrand in all three integrals is twice the difference of inside integrands from $J_3^-$ and $J_4^-$ but with all $f^{(j)}(x)-f^{(j)}(x-y)$ replaced by $f^{(j)}(x)+f^{(j)}(x-y)$.  It is clear that
\beq\lb{2.11b}
\begin{split}
|I_3'| &\le  C \|f'\|_{C^1}^2  \int_{ \bbR^2} h(x)  |f'''(x)| \left(|f''(x)| + |f''(x-y)|\right) \min\{1,|y|^{-2} \} \, dxdyds 
\\ & \le C \|f'\|_{C^1}^2  \|f''\|_{\tilde L^2} \|f'''\|_{\tilde L^2} \int_{\bbR}   \min\{1,|y|^{-2} \} \, dyds 
 \le C \|f'\|_{C^1}^2 ( \|f''\|_{\tilde L^2}^2 + \|f'''\|_{\tilde L^2}^2) .
\end{split}
\eeq

We leave the most challenging term $I_3'''$ for later and deal with $I_3''$ next.
We decompose it similarly to $I_3^-$, which gives us terms like $J_3^-$ and $J_4^-$ but with each $f^{(j)}(x)-f^{(j)}(x-y)$ replaced by $f^{(j)}(x)+f^{(j)}(x-y)$.  Then we decompose these terms further in two ways.  First, we replace one factor $f''(x) + f''(x-y)$ in each integral by the sum of $2f''(x)$ and $f''(x-y) - f''(x)$, and then in the first ``subterm'' with $2f''(x)$ we also split $3(f(x) + f(x-y))^2 -  y^2$ into the sum of $3[y^2+(f(x) + f(x-y))^2]$ and $-4y^2$ in order to simplify the formulas.  We thus obtain
\[
I_3''=2 (J_3^+ -  J_4^+ + 6J_5^+ - 8 J_6^+ - 2J_7^+),
\]
 where with $\tilde h(x):=\chi_{[0,1]}(f(x))\, h(x)$ we have
\begin{align*}
J_3^+ & := \int_\bbRr \tilde h(x) f'''(x) \int_{R_x}  \frac {y\, (f''(x-y) - f''(x)) (f'(x) + f'(x-y))^2 [3(f(x) + f(x-y))^2 -  y^2] } { [y^2+(f(x) + f(x-y))^2]^3 }  \, dydx ,
\\ J_4^+ & :=  \int_\bbRr \tilde h(x)  f'''(x) \int_{R_x}  \frac {y\, (f''(x-y) - f''(x)) (f''(x) + f''(x-y)) (f(x) + f(x-y)) } { [y^2+(f(x) + f(x-y))^2]^2 } \, dydx ,
\\ J_5^+ & := \int_\bbRr \tilde h(x)  f'''(x) f''(x)  \, PV \int_{R_x}  \frac {y\,  (f'(x) + f'(x-y))^2  } { [y^2+(f(x) + f(x-y))^2]^2 }  \, dydx ,
\\ J_6^+ & := \int_\bbRr  \tilde h(x) f'''(x) f''(x)  \, PV \int_{R_x}  \frac {y^3\,  (f'(x) + f'(x-y))^2  } { [y^2+(f(x) + f(x-y))^2]^3 }  \, dydx ,
\\ J_7^+ & :=  \int_\bbRr  \tilde h(x) f'''(x) f''(x)  \, PV \int_{R_x}  \frac {y\,  (f''(x) + f''(x-y)) (f(x) + f(x-y)) } { [y^2+(f(x) + f(x-y))^2]^2 } \, dydx .
\end{align*}
We will estimate these integrals one by one.

First, the argument for $J_3^-$ gives us also
\[
|J_3^+|  \le C (1+\|f'\|_{C^1}^2 ) \|f'''\|_{\tilde L^2}^2
\]
because $|f'(x) + f'(x-y)| \le  4(1+ \|f'\|_{C^1}) |y|$ for $|y|\in[ \sqrt{f(x)},1]$ by \eqref{2.7}.   From \eqref{2.7} we also have $|f(x-y)| \le  2(1+ \|f'\|_{C^1}) y^2$ for $|y|\in[ \sqrt{f(x)},1]$, which can be used in the argument for $J_4^-$ even though now we only have $|f''(x) + f''(x-y)|\le 2\|f''\|_{L^\infty}$. We thus obtain
\begin{align*}
|J_4^+|   \le C  (1+\|f'\|_{C^1}^2 ) \|f'''\|_{\tilde L^2}^2.
\end{align*}

To estimate the last three integrals, we will use that 
\beq\lb{2.12b}
\int_{R_x} y\phi(y)dy = \int_{\sqrt{f(x)}}^\infty y\,[\phi(y)-\phi(-y)]\,dy
\eeq
and assume below without loss that $x=0$ (and of course then also $f(0)\le 1$). 
For $J_5^+$, note that from \eqref{2.6a}  we obtain
\begin{align*}
&y^5 \left|   \frac {y(f'(0) + f'(-y))^2  } { [y^2+(f(0) + f(-y))^2]^2 } - \frac {y(f'(0) + f'(y))^2  } { [y^2+(f(0) + f(y))^2]^2 } \right|
\\ & \qquad \qquad \qquad \le y^2|2f'(0) + f'(y)+f'(-y)|\,| f'(y)-f'(-y)|  
\\ & \qquad \qquad \qquad \quad + 2 y(f'(0) + f'(y))^2 
| f(y)-f(-y)|.
\end{align*}
The first term on the right-hand side can be estimated by 
\[
C (1+\|f'\|_{C^{1,\gamma}}^2) \,y^2  \min \left\{1,(\sqrt{f(0)} +y^{1+\gamma})y \right\}  
\]
for $y\ge\sqrt {f(0)}$ 
due to \eqref{2.7} and
\beq \lb{2.12a}
\begin{split}
|f'(y)-f'(-y)| & \le 2\|f'\|_{C^1} \min\{1,y\}, 
\\ |2f'(0) + f'(y)+f'(-y)| &  \le 4\|f'\|_{L^\infty}  \qquad \qquad \qquad \quad \text{for } y > 1,  
\\ | f'(y)+f'(-y)-2f'(0)| &  \le    2\|f''\|_{\dot C^{\gamma}} |y|^{1+\gamma}     \qquad \qquad \text{for  $y\in[0,1]$}.
\end{split}
\eeq
The second term  can be estimated by 
\[
C (1+\|f'\|_{C^1}^3) y^2 \min\{1,y^{3}\} 
\]
for $y\ge\sqrt{f(0)}$ because \eqref{2.7}  shows
\begin{align*}
 |f'(z)|  \le  2(1+\|f'\|_{C^1}) \min \{ 1,y\}
\end{align*}
for $y\ge\sqrt{f(0)}$ and all $|z|\le y$.  Hence
\begin{align*}
|J_5^+|  & \le \int_\bbRr h(x) | f'''(x)|\,| f''(x)| \int_{\sqrt{f(x)}}^\infty C (1+\|f'\|_{C^{1,\gamma}}^3) \frac{ \min\{1,(\sqrt {f(x)}+y^{1+\gamma})y\} } { y^3 } dydx 
\\ &\le C (1+\|f'\|_{C^{1,\gamma}}^3) \|f''\|_{\tilde L^2} \|f'''\|_{\tilde L^2}
\end{align*}
because the inside integral is bounded by $C  (1+\|f'\|_{C^{1,\gamma}}^3) $.
The same bound can be obtained for $J_6^+$ in exactly the same way.

Finally, $J_7^+$ can be estimated similarly via \eqref{2.12b}, using \eqref{2.6a} to get
\beq\lb{2.12c}
\begin{split}
&y^5 \left|   \frac {y\,  (f''(0) + f''(-y)) (f(0) + f(-y)) } { [y^2+(f(0) + f(-y))^2]^2 } 
-\frac {y\,  (f''(0) + f''(y)) (f(0) + f(y)) } { [y^2+(f(0) + f(y))^2]^2 }  \right|
\\ &   \qquad \le y^2|f''(y) - f''(-y) |\,( f(0)+f(-y)) +    y^2|f''(0) + f''(y) |\,| f(y)-f(-y)|
\\ &  \qquad \quad + 2 y |f''(0) + f''(y) |\, ( f(0)+f(y)) 
| f(y)-f(-y)|.
\end{split}
\eeq
Then the above bounds together with (recall \eqref{2.7} and that $f(0)\le 1$)
\begin{align*}
|f''(y)-f''(-y)| &\le  2\|f''\|_{\dot C^{\gamma}}\min\{1,y^{\gamma}\} ,
\\ |f(\pm y)| &\le 2(1+\|f'\|_{C^1}) \min \{ y,y^2\},
\\ |f(y)-f(-y)| & = \left| 2f'(0)y - \int_{-y}^y (z-y\sgn z)f''(z)dz \right|
 \le 2
  |f'(0)|y + \|f''\|_{\dot C^{\gamma}} y^{2+\gamma} 
 \\ & \le  4 (1+\|f'\|_{C^{1,\gamma}}) 
 (\sqrt {f(0)}  +  y^{1+\gamma})y 
\end{align*}
for $y\ge\sqrt{f(0)}$,
bound the right-hand side of \eqref{2.12c} by
\begin{align*}
C (1+\|f'\|_{C^{1,\gamma}}^2) & \left[ y^{3}  \min\{1,y^{1+\gamma} \}  + y^2 \min\left\{y, (\sqrt {f(0)}  +  y^{1+\gamma})y \right\} \right]
\\ & + C (1+\|f'\|_{C^{1,\gamma}}^3) \min\left\{y^3, (\sqrt {f(0)}  +  y^{1+\gamma})y^4 \right\} 
\end{align*}
and we  obtain
\begin{align*}
|J_7^+|  & \le \int_\bbRr h(x) | f'''(x)|\,| f''(x)| \int_{\sqrt{f(x)}}^\infty C (1+\|f'\|_{C^{1,\gamma}}^3) \frac{ \min\{1,\sqrt {f(x)}+y^{1+\gamma}\} } { y^{2} } dydx 
\\ & \le C (1+\|f'\|_{C^{1,\gamma}}^3) \|f''\|_{\tilde L^2} \|f'''\|_{\tilde L^2}.
\end{align*}
So the above arguments yield the bound 
\beq\lb{2.13}
|I_3''| \le C  (1+\|f'\|_{C^{1,\gamma}}^3) (\|f''\|_{\tilde L^2}^2 + \|f'''\|_{\tilde L^2}^2 ).
\eeq


Let us now turn to $I_3'''$ and consider its inside integrand, again for $x=0$. This is
\beq\lb{2.13a}
y(f''(0)+ f''(-y)) \, \partial_{x}^2 A^+(0,y) = 6yP(-y) - 8y^3Q(-y) -2yS(-y),
\eeq
where
\begin{align*}
P(y) & :=  \frac { (f''(0) + f''(y))(f'(0) + f'(y))^2  } { [y^2+(f(0) + f(y))^2]^2 },
\\ Q(y) &:= \frac { (f''(0) + f''(y)) (f'(0) + f'(y))^2  } { [y^2+(f(0) + f(y))^2]^3 },
\\ S(y) &:= \frac { (f''(0) + f''(y))^2 (f(0) + f(y)) } { [y^2+(f(0) + f(y))^2]^2 }.
 \end{align*}
Let us first deal with $S$ above.

\begin{lemma} \lb{L.2.1}
We have
\beq\lb{2.30}
\left| \int_{- \sqrt{f(0)} }^{  \sqrt {f(0)} } y S(y) dy \right| \le  C (1+\|f'\|_{C^1}^{4}) \left( |f''(0)|   +  M_f(0) \right) 
\eeq
when $f(0)\le 1$, where 
\[
M_f(x):=\sup_{y>0} \frac 1{2y} \int_{-y}^y |f'''(x+z)|dz
\]
is the Hardy-Littlewood maximal function for $f'''$.
\end{lemma}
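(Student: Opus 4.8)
\emph{Strategy.} We may assume $f(0)>0$, so that $\sqrt{f(0)}\le 1$ and the integrand $yS(y)$ is continuous (the case $f(0)=0$ is trivial). The obstruction is that $S\ge 0$ on $[-\sqrt{f(0)},\sqrt{f(0)}\,]$ and can be as large as $\sim f''(0)^2 f(0)^{-3}$ near $y=0$, so $\int |y|\,S(y)\,dy$ diverges like $f(0)^{-1}$ and the bound must come from the fact that $S$ is \emph{almost even}. The obvious symmetrization, writing the integral as $\int_0^{\sqrt{f(0)}} y\,[S(y)-S(-y)]\,dy$ and expanding $S(y)-S(-y)$ via \eqref{2.6a}, is still not enough: the piece coming from the difference of the two fourth powers in the denominators remains non-integrable uniformly in $f(0)$. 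My plan is instead to peel off the bulk of $yS(y)$ as an \emph{exact} $y$-derivative, using the following identities (each checked by differentiation), where $b(y):=f(0)+f(y)$ so that $b'=f'$:
\[
\frac{y\,b}{(y^2+b^2)^2}=\frac{d}{dy}\!\left(\frac{-b}{2(y^2+b^2)}\right)+\frac{f'\,(y^2-b^2)}{2(y^2+b^2)^2},\qquad
\frac{y^2-b^2}{(y^2+b^2)^2}=\frac{d}{dy}\!\left(\frac{-y}{y^2+b^2}\right)-\frac{2\,y\,b\,f'}{(y^2+b^2)^2}.
\]

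\emph{The decomposition.} Since $yS(y)=(f''(0)+f''(y))^2\,\dfrac{y\,b(y)}{(y^2+b(y)^2)^2}$, substituting the first identity, then the second one into the leftover term, and pulling the slowly varying factors $(f''(0)+f''(y))^2$ and $f'(y)$ across the derivatives by the product rule, one obtains, with $a:=f''(0)+f''(y)$ (so $a'=f'''$), the exact identity
\[
yS(y)=\frac{d}{dy}\,\Psi(y)+R(y),\qquad \Psi(y):=\frac{-a^2\,(b+f'\,y)}{2\,(y^2+b^2)},
\]
where $R$ is the sum of the four terms $\dfrac{a\,a'\,b}{y^2+b^2}$, $\ \dfrac{a\,a'\,f'\,y}{y^2+b^2}$, $\ -\dfrac{a^2\,(f')^2\,y\,b}{(y^2+b^2)^2}$, $\ \dfrac{a^2\,f''\,y}{2\,(y^2+b^2)}$. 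Crucially, each of the first two carries a factor $f'''$, the third carries the extra small factors $(f'(y))^2$ and $b(y)$, and only the last carries none of these. Integrating over $[-\sqrt{f(0)},\sqrt{f(0)}\,]$, the derivative term contributes $\Psi(\sqrt{f(0)})-\Psi(-\sqrt{f(0)})$.

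\emph{The estimates.} Throughout the interval $|y|\le\sqrt{f(0)}$ we have, by \eqref{2.5a} and \eqref{2.7}, $f(0)\le b(y)\le C(1+\|f'\|_{C^1})f(0)$ and $|f'(y)|\le 2(1+\|f'\|_{C^1})\sqrt{f(0)}$, while $|a(y)|\le 2\|f''\|_{L^\infty}$ and also, since $f''(y)-f''(0)=\int_0^y f'''(z)\,dz$, $|a(y)|\le 2(|f''(0)|+M_f(0))$. My rule will be: whenever $a$ occurs, bound \emph{one} copy by $2\|f''\|_{L^\infty}\le 2\|f'\|_{C^1}$ and any remaining copy by $2(|f''(0)|+M_f(0))$; this pulls the factor $|f''(0)|+M_f(0)$ out \emph{linearly}. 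The boundary term is then $\le C(1+\|f'\|_{C^1}^2)(|f''(0)|+M_f(0))$ because $y^2+b(y)^2\asymp f(0)$ at $y=\pm\sqrt{f(0)}$. The first three remainder terms are estimated using the Poisson-type bound $\int_{\bbR}\dfrac{f(0)\,|f'''(y)|}{y^2+f(0)^2}\,dy\le C\,M_f(0)$ (proved by a dyadic decomposition straight from the definition of $M_f$), together with the elementary facts $\sqrt{f(0)}\,\dfrac{|y|}{y^2+f(0)^2}\le\dfrac{f(0)}{y^2+f(0)^2}$ for $|y|\le\sqrt{f(0)}$ and $\int_{-\sqrt{f(0)}}^{\sqrt{f(0)}}\dfrac{|y|\,f(0)^2}{(y^2+f(0)^2)^2}\,dy\le C$. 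The point is that the extra powers of $\sqrt{f(0)}$ carried by $f'$ and $(f')^2$ precisely compensate the $f(0)^{-1}$ growth of these kernel integrals, so each of the three is $\le C(1+\|f'\|_{C^1}^4)(|f''(0)|+M_f(0))$.

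\emph{Main obstacle.} The remaining term $\int_{-\sqrt{f(0)}}^{\sqrt{f(0)}}\dfrac{a(y)^2 f''(y)\,y}{2(y^2+b(y)^2)}\,dy$ is \emph{not} absolutely integrable with a uniform bound: estimated term by term it produces $\log\tfrac1{f(0)}$. Resolving this is the heart of the proof and requires a \emph{second} symmetrization: write it as $\int_0^{\sqrt{f(0)}} y\big[\Phi(y)-\Phi(-y)\big]\,dy$ with $\Phi(y):=\tfrac12\,a(y)^2 f''(y)\,(y^2+b(y)^2)^{-1}$ and expand $\Phi(y)-\Phi(-y)$ via \eqref{2.6a}; the numerator difference $a(y)^2 f''(y)-a(-y)^2 f''(-y)$ then gains a factor $\int_{-y}^y|f'''(z)|\,dz\le 2|y|\,M_f(0)$, and the denominator difference gains $|b(y)^2-b(-y)^2|\le C(1+\|f'\|_{C^1})^2\,|y|\,f(0)^{3/2}$, so in both cases an extra power of $|y|$ appears and kills the logarithm. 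The genuine difficulty of the lemma is thus the need to alternate ``extract an exact derivative, then symmetrize the leftover obstruction, then extract again,'' while at every stage making sure $|f''(0)|+M_f(0)$ comes out only to the first power and the accumulated power of $\|f'\|_{C^1}$ never exceeds $4$; the same scheme, with considerably heavier bookkeeping, must then be run for the terms $P$ and $Q$ of \eqref{2.13a} (whose numerators carry $(f')^2$ in place of $(f'')^2$) and, later, for all of $I_4^+$.
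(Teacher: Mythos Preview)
Your decomposition $yS(y)=\Psi'(y)+R(y)$ via the two Poisson-type identities is correct (I checked both identities and the product-rule derivation), the boundary terms and the first three pieces of $R$ are bounded exactly as you say, and the symmetrization of the fourth piece $\tfrac12 a^2 f'' y/(y^2+b^2)$ does kill the residual $\log\tfrac1{f(0)}$: both the numerator difference and the denominator difference gain a factor $y$, and the resulting integrals are uniformly bounded with total $(1+\|f'\|_{C^1})^4$. So the argument for Lemma~\ref{L.2.1} is complete.

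It is, however, genuinely different from the paper's route. The paper \emph{symmetrizes first}, writes $S(y)-S(-y)$ in Taylor form around $0$ (setting $a=f(0)$, $b=f'(0)$, $c=f''(0)$ and freezing $f(0)+f(\pm y)\to 2a$), and isolates a constant-coefficient leading term $\tilde S_4(y)=8bc^2(y^3-12a^2y)/(y^2+4a^2)^3$; the crucial observation is then the exact-derivative identity \eqref{2.34}. You instead keep the full variable-coefficient $b(y)=f(0)+f(y)$ and find an exact derivative in $yS(y)$ itself, postponing the symmetrization to the single surviving logarithmic term. For this lemma your scheme is arguably more economical: no Taylor remainders $g_j$, $G_j$, and no separate control of $S_4-\tilde S_4$ and $S_2+S_3-S_4$. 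The paper's approach buys something else, though: by reducing everything to explicit rational functions of $y$ with \emph{constant} coefficients, the cancellation between the non-integrable pieces of $P$ and $Q$ in Lemma~\ref{L.2.2} (see \eqref{2.45}) and of $U$ and $V$ in Lemma~\ref{L.2.3} (see \eqref{2.61}) becomes an elementary algebraic computation. As the paper's remark after Lemma~\ref{L.2.1} notes, \eqref{2.30} \emph{fails} individually for $yP(y)$ and $y^3Q(y)$, so your ``same scheme'' there would have to treat the combination $3yP-4y^3Q$ from the start; finding the right variable-coefficient exact-derivative identity for that combination is likely possible but less transparent than the paper's frozen-coefficient algebra.
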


{\it Remark.}
This result and Lemma \ref{L.2.2} below, which is its analog with  $6yP(y)-8y^3Q(y)$ in place of $yS(y)$, now yield
\beq\lb{2.14}
|I_3'''|\le C (1+\|f'\|_{C^1}^{4})  (\|f''\|_{\tilde L^2 }^2+\|f'''\|_{\tilde L^2 }^2) .
\eeq
This is because both results equally hold with any $x\in\bbR$ instead of just $x=0$, and it is well known that $\|M_f\|_{L^2} \le C \|f'''\|_{L^2}$ for $f\in L^2(\bbR)$, which easily implies $\|M_f\|_{\tilde L^2 } \le C \|f'''\|_{\tilde L^2 }$.
 because for $y\ge 1$ we have
\[
\frac 1{2y} \int_{-y}^y |f'''(x+z)|dz \le \sup_{x'\in\bbR} \frac 12 \int_{x'-1}^{x'+1} |f'''(z)|dz 
\le \|f'''\|_{\tilde L^2}.
\]
  Together with \eqref{2.12}, \eqref{2.11b}, and \eqref{2.13}, this yields
\beq \lb{2.14a}
|I_3^\pm| \le C (1+\|f\|_{C^{2,\gamma}_\gamma}^{4})  (\|f''\|_{\tilde L^2 }^2+\|f'''\|_{\tilde L^2 }^2) .
\eeq
We note that, as the proof of Lemma \ref{L.2.2} shows,  \eqref{2.30} does not hold individually for integrands $yP(y)$ and $y^3Q(y)$!
\smallskip

\begin{proof}
 Let 
\[
a:=f(0)\in [0,1], \qquad b:=f'(0)\in \left[ -2 \|f''\|_{L^\infty}^{1/2} \sqrt{a},2 \|f''\|_{L^\infty}^{1/2} \sqrt{a} \right], \qquad  c:= {f''(0)}
\]
(see \eqref{2.5a}).
Then 
\begin{align*}
f''(y)  = c + g_0(y), \qquad
 f'(y)  = b+cy + g_1(y), \qquad
 f(y)  = a+by+\frac c2 y^2 + g_2(y)
\end{align*}
for all $y\in\bbR$, where
\begin{align*}
g_0(y)  := \int_0^y f'''(z) dz, \qquad
g_1(y)  :=  \int_0^y (y-z) f'''(z) dz, \qquad 
 g_2(y)  :=  \int_0^y \frac{(y-z)^2}2 f'''(z) dz.
\end{align*}
Let also
\begin{align*}
G_j(y) & :=g_j(y) - g_j(-y) = \int_{-y}^y \frac{(y-z)^j}{j!} f'''(z) dz,
\\ F_{k} (y) & := (f(0)+f( y))^k + (f(0)+f( -y))^k.
\end{align*}

By \eqref{2.6a} we have 
\beq\lb{2.30a}
S(y)-S(-y) = S_1(y)G_0(y) + S_2(y) + S_3(y) ,
\eeq
where
\begin{align*}
S_1(y) & :=  \frac { (2f''(0)+f''(y)+f''(-y) ) \,  (f(0)+f( y))   } {[y^2+(f(0)+f( y))^2]^2},
\\ S_2(y) & := \frac {(2c+ g_0(-y))^2  (2by+G_2(y))} {[y^2+(f(0)+f( y))^2]^2 },
\\ S_3(y) & :=  - \frac {(2c+ g_0(-y))^2 (f(0)+f( -y)) \, [2y^2+ F_{2}(y) ] \, F_{1}(y) \, (2by+G_2(y))} 
{[y^2+(f(0)+f( y))^2]^2 \, [y^2+(f(0)+f( -y))^2]^2}.
\end{align*}

First consider $S_1$ and let
\[
\tilde S_1(y)  := \frac {(2f''(0)+f''(y)+f''(-y) )   (f(0)+f( y))} {[y^2+4a^2]^2 }.
\]
From \eqref{2.7} we have
\beq\lb{2.30c}
\begin{split}
|f(0)+f(\pm y)|  &\le  3(1+ \|f'\|_{C^1}) a
\\ |f(0)+f(\pm y)-2a| & =|f(\pm y)-f(0)|\le 2(1+ \|f'\|_{C^1}) \sqrt a \, y
\end{split}
\eeq
for all $y\in[0, \sqrt a]$, and so for these $y$ we have
\[
|(f(0)+f(\pm y))^2-4a^2|  \le 20(1+ \|f'\|_{C^1}^2 ) a^{3/2}y
\]
 and then
\beq\lb{2.33}
\left| \frac  {y^2+4a^2 } {y^2+(f(0)+f( \pm y))^2 } -1 \right| \le  C  (1+ \|f'\|_{C^1}^2) \frac{   a^{3/2}} {\max\{y,a\} }.
\eeq
This yields
\beq\lb{2.32a}
| S_1(y) - \tilde S_1(y)|  
\le C (1+\|f'\|_{C^1}^{4})   \frac{   a^{5/2}} {\max\{y,a\}^5}
\eeq
for all $y\in[0, \sqrt a]$.  Similarly, \eqref{2.30c} yields
\beq\lb{2.32b}
|\tilde S_1(y)| \le C(1+\|f'\|_{C^1}^{2})  \frac{a} {\max\{y,a\}^4}
\eeq
for  $y\in[0, \sqrt a]$,
so
\begin{align*}
\int_0^{ \sqrt a} y |\tilde S_1(y)G_0(y)| dy
 \le C(1+\|f'\|_{C^1}^{2}) \int_{0}^{ \sqrt a}  \frac{ay^2} {\max\{y,a\}^4}  M_{f}(0) dy 
 \le C(1+\|f'\|_{C^1}^{2}) M_{f}(0).
\end{align*}
An analogous bound can be obtained using \eqref{2.32a} in place of \eqref{2.32b}, and we thus have
\beq\lb{2.32c}
\int_0^{ \sqrt a} y |S_1(y)G_0(y)| dy  
\le  C(1+\|f'\|_{C^1}^{4}) M_{f}(0),
\eeq
which is dominated by the right-hand side of \eqref{2.30}.

Let next
\[
\tilde S_4(y):= \frac { 8bc^2y }{[y^2+4a^2]^2}  - \frac {128a^2bc^2y } {[y^2+4a^2]^3} = 8bc^2 \frac {y^3-12a^2y } {[y^2+4a^2]^3}, 
\]
which is  $S_2(y)+S_3(y)$ after dropping  all $G_2$ and $g_0$ terms and replacing all $f(0)+f(\pm y)$ by $2a$ (including those in $F_j(y)$);
this is also the leading order term of $S_2(y)+S_3(y)$ when $bc\neq 0$ and $0\le y\ll \sqrt a$.  When $|b|\sim\sqrt a$ and $c\sim 1$, in the latter region we have $y|\tilde S_4(y)|\sim \sqrt a\, y^2 \max\{y,a\}^{-4}$, which is too large to yield an upper bound on $\int_0^{\sqrt a} y|\tilde S_4(y)|dy$ that is independent of $a$ when $a\ll 1$.  However, we  observe that
\beq\lb{2.34}
y \tilde S_4(y) = -\partial_y\, \frac {8bc^2y^3}{[y^2+4a^2]^2} ,
\eeq
so this and $|b|\le 2 \|f''\|_{L^\infty}^{1/2} \sqrt{a}$ yield
\beq\lb{2.35}
\left| \int_0^{ \sqrt a} y \tilde S_4(y) dy \right| \le  C  \|f''\|_{L^\infty}^{1/2} c^2, 
\eeq
which is dominated by the right-hand side of \eqref{2.30}.

Now let
\[
S_4(y):=\frac {(2c+ g_0(-y))^2  (2by+G_2(y)) }
{[y^2+4a^2]^2} - \frac {16a^2(2c+ g_0(-y))^2  (2by+G_2(y)) }
{[y^2+4a^2]^3},
\]
which is obtained from $S_2(y)+S_3(y)$ in the same way as $\tilde S_4(y)$, but without dropping the $G_2$ and $g_0$ terms.
From  \eqref{2.30c}, 
\beq\lb{2.35b}
\begin{split}
2c+ g_0(-y) & =f''(0)+f''(-y),
\\ 2by+G_2(y) & = f(y)-f(-y),
\end{split}
\eeq
\eqref{2.7},  \eqref{2.33}, 
and $|g_0(-y)|\le 2yM_f(0)$ for $y\in[0,1]$ we obtain
\begin{align}
| S_2(y) & +S_3(y) - S_4(y)|  \notag
\\ & \le C  \|f''\|_{L^\infty} (|c| + M_f(0)\sqrt a) (1+\|f'\|_{C^1} ) \sqrt a \,y 
\left( \frac{  (1+\|f'\|_{C^1}^2) a^{3/2}} {\max\{y,a\}^5} +  \frac{(1+\|f'\|_{C^1}^2) a^{3/2}y} {\max\{y,a\}^6}   \right) \notag
\\ & \leq C  (1+\|f'\|_{C^1}^{4})   (|c|+ M_f(0)) \frac{ a^2} {\max\{y,a\}^4}  \lb{2.36}
\end{align}
for $y\in[0, \sqrt a]$, similarly to \eqref{2.32a}.  This bound follows after we first replace $f(0)+f(-y)$ by $2a$ and $F_1(y)$ by $4a$ in $S_3$, and then all the $(f(0)+f(\pm y))^2$ by $4a^2$ in both $S_2$ and $S_3$.
Thus
\beq\lb{2.37}
 \int_0^{ \sqrt a} y | S_2(y) +S_3(y) - S_4(y)| dy  \le  C (1 +\|f'\|_{C^1}^{4})  (|c|+ M_f(0)) ,
\eeq
which is again dominated by the right-hand side of \eqref{2.30}.
For $y\in[0, \sqrt a]$ we similarly have
\beq\lb{2.38}
|S_4(y) - \tilde S_4(y)| \le C(1+\|f'\|_{C^1}^{2})  \frac{ \sqrt a\, y + y^2} {\max\{y,a\}^4}  \int_{-y}^{y}|f'''(z)| dz \le C(1+\|f'\|_{C^1}^{2})  \frac{M_f(0) a} {\max\{y,a\}^3}  ,
\eeq
so  we  obtain
\beq\lb{2.39}
 \int_0^{ \sqrt a} y |S_4(y) - \tilde S_4(y)| dy
\le  C(1+\|f'\|_{C^1}^{2}) M_f(0).
\eeq
The result therefore follows from  \eqref{2.30a}, \eqref{2.32c},   \eqref{2.35}, \eqref{2.37}, and \eqref{2.39}.
\end{proof}

Next we consider the terms $P$ and $Q$.

\begin{lemma} \lb{L.2.2}
When $f(0)\le 1$, we have
\beq\lb{2.40}
\left| \int_{- \sqrt{f(0)} }^{  \sqrt {f(0)} } \left[ 3yP(y) -4y^3 Q(y) \right] dy \right| 
 \le  C (1+\|f'\|_{C^1}^{4})  \left( |f''(0)|   +  M_f(0) \right) 
\eeq
\end{lemma}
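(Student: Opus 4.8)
The plan is to follow the template of the proof of Lemma~\ref{L.2.1}. We may assume $a:=f(0)\in(0,1]$, since for $a=0$ the integration interval degenerates and the left-hand side of \eqref{2.40} vanishes. Set also $b:=f'(0)$ and $c:=f''(0)$, so that $|b|\le 2\|f''\|_{L^\infty}^{1/2}\sqrt a$ by \eqref{2.5a}, and write $f''(y)=c+g_0(y)$, $f'(y)=b+cy+g_1(y)$, $f(y)=a+by+\tfrac c2 y^2+g_2(y)$, with the Taylor remainders $g_j$ and their odd combinations $G_j$ exactly as in the proof of Lemma~\ref{L.2.1}; recall in particular $|g_0(\pm y)|,|G_0(y)|\le 2yM_f(0)$ and $|G_1(y)|\le Cy^2M_f(0)$, $|G_2(y)|\le Cy^3M_f(0)$ for $y\in[0,1]$. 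Since $3yP(y)-4y^3Q(y)=y\psi(y)$ with $\psi:=3P-4y^2Q$, I would first symmetrize in $y\mapsto-y$:
\[
\int_{-\sqrt a}^{\sqrt a}\big[3yP(y)-4y^3Q(y)\big]\,dy=\int_0^{\sqrt a} y\,[\psi(y)-\psi(-y)]\,dy ,
\]
and then isolate the leading part of $\psi(y)-\psi(-y)$ and treat the rest as error.

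For the leading part $\Psi_0$ I would replace in the numerators $f''(0)+f''(\pm y)$ by $2c$ and $(f'(0)+f'(\pm y))^2$ by $(2b\pm cy)^2$, and in the denominators $[y^2+(f(0)+f(\pm y))^2]$ by $[y^2+4a^2]$. The decisive algebraic identity is
\[
\frac{3}{[y^2+4a^2]^2}-\frac{4y^2}{[y^2+4a^2]^3}=\frac{3(y^2+4a^2)-4y^2}{[y^2+4a^2]^3}=\frac{12a^2-y^2}{[y^2+4a^2]^3},
\]
which gives $\Psi_0(y)=2c\,(2b+cy)^2\,\dfrac{12a^2-y^2}{[y^2+4a^2]^3}$. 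The even contributions $4b^2$ and $c^2y^2$ drop out under symmetrization, so $\Psi_0(y)-\Psi_0(-y)=16bc^2\,y\,\dfrac{12a^2-y^2}{[y^2+4a^2]^3}$, and then — in analogy with \eqref{2.34} — one has the exact derivative
\[
y\,[\Psi_0(y)-\Psi_0(-y)]=16bc^2\,\frac{12a^2y^2-y^4}{[y^2+4a^2]^3}=\partial_y\,\frac{16bc^2\,y^3}{[y^2+4a^2]^2}.
\]
Hence $\int_0^{\sqrt a}y[\Psi_0(y)-\Psi_0(-y)]\,dy=\dfrac{16bc^2\,a^{3/2}}{(a+4a^2)^2}=\dfrac{16bc^2}{\sqrt a\,(1+4a)^2}$, and since $|b|\le 2\|f''\|_{L^\infty}^{1/2}\sqrt a$ and $|c|\le\|f''\|_{L^\infty}\le\|f'\|_{C^1}$, this is at most $32\|f''\|_{L^\infty}^{3/2}|f''(0)|\le C(1+\|f'\|_{C^1}^4)|f''(0)|$, dominated by the right-hand side of \eqref{2.40}.

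It then remains to estimate $\int_0^{\sqrt a}y\,\big|\psi(y)-\psi(-y)-(\Psi_0(y)-\Psi_0(-y))\big|\,dy$. This difference is a finite sum of terms, each of which carries at least one remainder factor ($g_0,g_1,g_2$ or some $G_j$) or at least one discrepancy factor $[y^2+4a^2]^{-1}-[y^2+(f(0)+f(\pm y))^2]^{-1}$. I would bound these in the same manner as \eqref{2.32a}, \eqref{2.36}, and \eqref{2.38}: peeling off one discrepancy at a time via \eqref{2.6a}, controlling $f(\pm y),f'(\pm y)$ by powers of $\max\{y,\sqrt a\}$ via \eqref{2.7}, using \eqref{2.33} for the denominator replacements, and inserting the $g_j,G_j$ bounds recalled above. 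Each such term then integrates over $[0,\sqrt a]$ to at most $C(1+\|f'\|_{C^1}^4)(|f''(0)|+M_f(0))$, and summing completes the proof.

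The main obstacle is pinpointing this cancellation. Individually, the odd parts of $yP(y)$ and of $y^3Q(y)$ produce, after integration over $[0,\sqrt a]$, a contribution of order $|bc^2|\,a^{-1}\sim\|f''\|_{L^\infty}^{1/2}|f''(0)|^2\,a^{-1/2}$ — which is \emph{unbounded} as $a\to0$, because $\int_0^{\sqrt a}y^2[y^2+4a^2]^{-2}\,dy\sim a^{-1}$ — so \eqref{2.40} genuinely fails for these integrands separately (the point of the remark after Lemma~\ref{L.2.1}). It is only the specific combination $3P-4y^2Q$ that collapses the two denominators into $(12a^2-y^2)[y^2+4a^2]^{-3}$ and turns $y[\Psi_0(y)-\Psi_0(-y)]$ into an exact $y$-derivative with a harmless boundary value; recognizing that this is forced by \eqref{2.13a} (where $I_3'''$ is the integral of $6yP(-y)-8y^3Q(-y)-2yS(-y)$) is the conceptual heart of the argument. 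The remaining labor — organizing the numerous error terms so each is separately dominated by $|f''(0)|+M_f(0)$ while tracking the attendant powers of $\|f'\|_{C^1}$ — is bookkeeping along the lines already carried out for $yS(y)$.
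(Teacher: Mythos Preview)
Your reduction to a single expression $\psi=3P-4y^2Q$ and the exact-derivative identity for its leading odd part is correct and elegant, but there is a genuine gap in the error analysis. Your $\Psi_0$ replaces $f(0)+f(\pm y)$ by the \emph{even} constant $2a$ in both the numerator factor $3(f(0)+f(\pm y))^2-y^2$ and the denominator $D(\pm y)$, so the entire contribution of the odd part of $f(0)+f(\pm y)$ sits in your ``error''. That odd part has the leading form $f(y)-f(-y)\approx 2by$, which is \emph{not} a remainder $g_j$ or $G_j$, and the resulting piece of $\psi(y)-\psi(-y)$ is
\[
2c\,(2b)^2\Bigl[\tfrac{3(f(0)+f(y))^2-y^2}{D(y)^3}-\tfrac{3(f(0)+f(-y))^2-y^2}{D(-y)^3}\Bigr]
\ \approx\ 384\,ab^3c\,\frac{y(y^2-4a^2)}{[y^2+4a^2]^4}.
\]
A pointwise bound on this (via \eqref{2.33}, \eqref{2.7}, \eqref{2.30c} as you propose) gives only $|c|\,a^{5/2}y/\max\{y,a\}^6$, and $\int_0^{\sqrt a} y\cdot a^{5/2}y/\max\{y,a\}^6\,dy\sim a^{-1/2}$ --- exactly the blow-up you correctly flagged for $yP$ and $y^3Q$ separately. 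So this term cannot be handled as ``bookkeeping along the lines already carried out for $yS(y)$.''

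What saves it is a \emph{second} exact-derivative identity: $y$ times the display above equals $-128\,ab^3c\,\partial_y\bigl(y^3/[y^2+4a^2]^3\bigr)$, whose boundary value at $\sqrt a$ is $O(|b|^3|c|/\sqrt a)=O(\|f''\|_{L^\infty}^{3/2}|c|\,a)$, harmless. The paper's proof builds this into its leading term from the start: it approximates $(f'(0)+f'(\pm y))^2$ by $(2b)^2$ rather than $(2b\pm cy)^2$, so its $\tilde P_4,\tilde Q_4$ carry both the $bc^2$ and the $ab^3c$ contributions, and \eqref{2.45} then exhibits the combination $3y\tilde P_4-4y^3\tilde Q_4$ as a sum of \emph{three} exact derivatives (two of size $bc^2$ and one of size $ab^3c(2ac-b^2)$), after which the remaining errors genuinely are pointwise-integrable in the manner of Lemma~\ref{L.2.1}. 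Your $\Psi_0$ captures only one of these; to close the argument you must either enlarge $\Psi_0$ to include the $ab^3c$ piece, or isolate it separately and recognize the additional antiderivative.
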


\begin{proof}
This proof will follow along the same lines as that of Lemma \ref{L.2.1} so we will skip some details.  Let $a,b,c,g_j,G_j^\pm$ be as above, and let us start with $P$.  By \eqref{2.6a} we have 
\beq\lb{2.40a}
P(y)-P(-y) = P_1(y)G_0(y) + P_2(y) + P_3(y) ,
\eeq
where
\begin{align*}
P_1(y) & :=  \frac {  (f'(0)+f'( y))^2   } {[y^2+(f(0)+f( y))^2]^2},
\\ P_2(y) & := \frac {(2c+ g_0(-y)) \, (2f'(0)+f'(y)+f'(-y)) \, (2cy+G_1(y))} {[y^2+(f(0)+f( y))^2]^2 },
\\ P_3(y) & :=  - \frac {(2c+ g_0(-y)) (f'(0)+f'( -y))^2 \, [2y^2+ F_{2}(y) ] \, F_{1}(y) \, (2by+G_2(y))} 
{[y^2+(f(0)+f( y))^2]^2 \, [y^2+(f(0)+f( -y))^2]^2}.
\end{align*}

Let
\[
\tilde P_1(y)  := \frac {  (f'(0)+f'( y))^2   } {[y^2+4a^2]^2 }.
\]
Then similarly to \eqref{2.32a} and \eqref{2.32b} we obtain
\begin{align*}
| P_1(y) - \tilde P_1(y)|  &\le C (1+\|f'\|_{C^1}^{4})   \frac{   a^{5/2}} {\max\{y,a\}^5},
\\ |\tilde P_1(y)| &\le C(1+\|f'\|_{C^1}^{2})  \frac{a} {\max\{y,a\}^4}
\end{align*}
for $y\in[0, \sqrt a]$, where we also used the first bound from \eqref{2.7}.  Just as \eqref{2.32c}, we now get
\beq\lb{2.41}
\int_0^{ \sqrt a} y |P_1(y)G_0(y)| dy  
\le  C(1+\|f'\|_{C^1}^{4}) M_{f}(0).
\eeq

Let now
\[
\tilde P_4(y):= \frac { 16bc^2y }{[y^2+4a^2]^2}  - \frac {128ab^3cy } {[y^2+4a^2]^3} = 
16bc^2 \frac {y^3-12a^2y } {[y^2+4a^2]^3} + 128abc \frac {(2ac-b^2)y } {[y^2+4a^2]^3} , 
\]
which is  $P_2(y)+P_3(y)$ after dropping  all $G_j$ and $g_0$ terms, replacing all $f(0)+f(\pm y)$ by $2a$ (including those in $F_j(y)$), and replacing all $f'(0)+f'(\pm y)$ terms by $2b$.  Then
\beq\lb{2.42}
y \tilde P_4(y) = 128abc \frac {(2ac-b^2)y^2 } {[y^2+4a^2]^3} -\partial_y\, \frac {16bc^2y^3}{[y^2+4a^2]^2} ,
\eeq
but this time the first term means that
\beq\lb{2.42a}
\left| \int_0^{ \sqrt a} y \tilde P_4(y) dy \right| \sim a^{-1/2}
\eeq
for $a\ll 1$ whenever $a\neq 0$, $|b|\not\ll \sqrt a$,  $|c|\not\ll 1$, and  $| 2ac-b^2 |\not \ll a$.  We will therefore  leave $\tilde P_4$ for now and return to it at the end of this proof.

Next let
\[
P_4(y):=\frac {4b (2c+ g_0(-y))  \, (2cy+G_1(y))} {[y^2+4a^2]^2 }
 - \frac { 32ab^2 (2c+ g_0(-y))  \, (2by+G_2(y))} 
{[y^2+4a^2]^3 }
\]
which is obtained from $P_2(y)+P_3(y)$ in the same way as $\tilde P_4(y)$, but without dropping the $G_2$ and $g_0$ terms.  Similarly to \eqref{2.36}, we now obtain
\begin{align*}
| P_2(y) & +P_3(y) - P_4(y)|  
\\ & \le C \|f''\|_{L^\infty}^2 \frac{M_f(0) y^3} {\max\{y,a\}^4} +  C (|c| + M_f(0)\sqrt a) (1+\|f'\|_{C^1}^4)  \frac{  a^{2}y^2} {\max\{y,a\}^6}  
\\ & + C (|c| + M_f(0)\sqrt a) (1+\|f'\|_{C^1}^2)  \sqrt a \, y
\left(   \frac{  (1+\|f'\|_{C^1}^2) a^{3/2}} {\max\{y,a\}^5} +    \frac{  (1+\|f'\|_{C^1}) a^{3/2}y} {\max\{y,a\}^6}   \right) 
\\ & \leq C  (1+\|f'\|_{C^1}^{4})   (|c|+ M_f(0))  \frac{   a^2+y^3} {\max\{y,a\}^4} 
\end{align*}
for $y\in[0, \sqrt a]$, where we also used $|b|\le 2 \|f''\|_{L^\infty}^{1/2} \sqrt{a}$
and 
\beq\lb{2.42b}
\begin{split}
2cy+G_1(y) & = f'(y)-f'(-y),  
\\ \left| (f'(0)+f'(\pm y))^n -(2b)^n \right|  &\le C_n (1+\|f'\|_{C^1}^n) a^{(n-1)/2}   y, 
\\ |2f'(0) + f'(y)+f'(-y)-4b| & = |g_1(y)+g_1(-y)| \le  2y^2 M_f(0)
\end{split}
\eeq
for $y\in[0, \sqrt a]$ and $n\ge 1$ (here we only need $n=2$ but  other $n$ will be useful in Lemma~\ref{L.2.3} below).
  The above estimate is obtained  if we first replace all the terms $f'(0)+f'(\pm y)$ by $2b$, where the errors are the first two terms after the first inequality above (in the first of them we use the first line in \eqref{2.35b} and the first and third line in \eqref{2.42b}; in the second we use the second line in \eqref{2.35b}, \eqref{2.7}, the second line in \eqref{2.42b}, and \eqref{2.30c}).
  Then during the following replacements we  use the bound $|b|\le 2 \|f''\|_{L^\infty}^{1/2} \sqrt{a}$, obtaining essentially the same errors as in Lemma \ref{L.2.1} (these are collected in the third term after the first inequality above; here we used the second line in \eqref{2.35b}, the first line in \eqref{2.42b}, the second line in \eqref{2.30c}, and \eqref{2.33}).
From this we see that
\beq\lb{2.43}
 \int_0^{ \sqrt a} y | P_2(y) +P_3(y) - P_4(y)| dy  \le  C (1 +\|f'\|_{C^1}^{4})  (|c|+ M_f(0)).
\eeq
By \eqref{2.35b}, the first line in \eqref{2.42b}, and \eqref{2.7}  we again have
\[
|P_4(y) - \tilde P_4(y)| \le C(1+\|f'\|_{C^1}^{2})  \frac{ a} {\max\{y,a\}^4}  \int_{-y}^{y}|f'''(z)| dz \le C(1+\|f'\|_{C^1}^{2})  \frac{M_f(0) a} {\max\{y,a\}^3}
\]
for $y\in[0, \sqrt a]$, so  we  obtain
\[
 \int_0^{ \sqrt a} y |P_4(y) - \tilde P_4(y)| dy
\le  C(1+\|f'\|_{C^1}^{2}) M_f(0).
\]
Finally we can conclude from this and \eqref{2.43} that
\beq\lb{2.44}
 \int_0^{ \sqrt a} y | P_2(y) +P_3(y) - \tilde P_4(y)| dy  \le  C (1 +\|f'\|_{C^1}^{4})  (|c|+ M_f(0)).
\eeq

The treatment of $Q$ is the same as $P$, with two adjustments due to the power 3 in the denominator of $Q$.  First, all the pointwise estimates on $Q_j$ (terms analogous to $P_j$) have an extra factor $\max\{y,a\}^{-2}$, but this is cancelled in all the integrals by the extra factor $y^2$ in front of $Q$ in \eqref{2.13a} (relative to $P$).  Second, the term $Q_3$ has an extra factor $\frac 32$ relative to $P_3$ (besides $\max\{y,a\}^{-2}$).  This means that we still get
\beq\lb{2.44b}
\int_0^{ \sqrt a} y^3 |Q_1(y)G_0(y)| dy  \le  C(1+\|f'\|_{C^1}^{4}) M_{f}(0).
\eeq
and
\beq\lb{2.44a}
 \int_0^{ \sqrt a} y^3 | Q_2(y) +Q_3(y) - \tilde Q_4(y)| dy  \le  C (1 +\|f'\|_{C^1}^{4})  (|c|+ M_f(0))
\eeq
via the same arguments as \eqref{2.41} and \eqref{2.44}, but now
 the critical term (analogous to $\tilde P_4$) is 
\[
\tilde Q_4(y):= \frac { 16bc^2y }{[y^2+4a^2]^3}  - \frac {192ab^3cy } {[y^2+4a^2]^4} = 
16bc^2 \frac {y^3-20a^2y } {[y^2+4a^2]^4} + 192abc \frac {(2ac-b^2)y } {[y^2+4a^2]^4} .
\]
Hence
\beq\lb{2.44c}
y^3 \tilde Q_4(y) = 192abc \frac {(2ac-b^2)y^4 } {[y^2+4a^2]^4} -\partial_y\, \frac {16bc^2y^5}{[y^2+4a^2]^3} ,
\eeq
which together with \eqref{2.42} yields
\begin{align}
\frac{3y\tilde P_4(y)-4y^3 \tilde Q_4(y)}{16} &= -24abc (2ac-b^2) \frac {y^4-4a^2y^2 } {[y^2+4a^2]^4} -\partial_y\, \frac {3bc^2y^3}{[y^2+4a^2]^2} + \partial_y\, \frac {4bc^2y^5}{[y^2+4a^2]^3}  \notag
\\ & = \partial_y  \frac { 8abc (2ac-b^2) y^3 } {[y^2+4a^2]^3} -\partial_y\, \frac {3bc^2y^3}{[y^2+4a^2]^2} + \partial_y\, \frac {4bc^2y^5}{[y^2+4a^2]^3} . \lb{2.45}
\end{align}
This way we essentially cancelled the problematic term in \eqref{2.42} by adding to it its equally problematic counterpart from \eqref{2.44c}, and using $|b|\le 2 \|f''\|_{L^\infty}^{1/2} \sqrt{a}$ we obtain
\beq \lb{2.46}
\left| \int_0^{ \sqrt a} \left[ 3y\tilde P_4(y)- 4y^3 \tilde Q_4(y) \right] dy \right| 
\le C (\|f''\|_{L^\infty}^{1/2} c^2 + \|f''\|_{L^\infty}^{3/2}  |c|) \le C\|f''\|_{L^\infty}^{3/2} |c| .
\eeq
In view of \eqref{2.40a},  \eqref{2.41},  \eqref{2.44}, \eqref{2.44b}, and \eqref{2.44a}, this finishes the proof.
\end{proof}


\section{Estimates on  $I_4^\pm$} \lb{S6}

Finally we will treat $I_4^\pm$.  Note that  $I_4^-= 6J_3^- -24 J_5^- + I_2^-$, where
\begin{align*}
J_5^- & := \int_\bbRr h(x)  f'''(x) \int_\bbR  \frac {y\, (f'(x) - f'(x-y))^4 (f(x) - f(x-y)) [(f(x) - f(x-y))^2 -  y^2] } { [y^2+(f(x) - f(x-y))^2]^4 }  \, dydx .
\end{align*}
Similarly to \eqref{2.11a} we now obtain
\begin{align*}
|J_5^-| & \le C \|f'\|_{C^1}^4  \int_{  \bbR^2 \times [0,1]}  h(x) | f'''(x)|\, | f''(x-sy)|   \min\{1,|y|^{-4} \} \, dxdyds 
\\ & \le C \|f'\|_{C^1}^4  \|f''\|_{\tilde L^2} \|f'''\|_{\tilde L^2} \int_{\bbR \times [0,1]}   \min\{1,|y|^{-4} \} \, dyds 
 \le C \|f'\|_{C^1}^4 (\|f''\|_{\tilde L^2}^2+ \|f'''\|_{\tilde L^2}^2) .
\end{align*}
This, \eqref{2.11}, and \eqref{2.11a} now yield
\beq \lb{2.65}
|I_4^-| \le C(1+  \|f'\|_{C^{1,\gamma}}^4)  (\|f''\|_{\tilde L^2}^2 + \|f'''\|_{\tilde L^2}^2).
\eeq

Term $I_4^+$ will again be the challenging one. As with $I_3^+$, split $I_4^+=I_4'+I_4'' + I_4'''+I_4''''$, where with $R_x:=\bbR\setminus[-\sqrt{f(x)}, \sqrt{f(x)}\,]$ and $\tilde h(x):=\chi_{[0,1]}(f(x))\, h(x)$ we have
\begin{align*}
I_4' & := \int_\bbRr \chi_{(1,\infty)}(f(x)) \, h(x) f'''(x)  \int_{\bbR} \left[ y\, (f'(x)+ f'(x-y)) \, \partial_{x}^3 A^+(x,y) +2y W(x,y) \right] \, dydx,
\\ I_4'' & := \int_\bbRr \tilde h(x) f'''(x) \, PV \int_{R_x} \left[ y\, (f'(x)+ f'(x-y)) \, \partial_{x}^3 A^+(x,y) +2y W(x,y) \right] \, dydx,
\\ I_4''' & :=  \int_\bbRr \tilde h(x) f'''(x) \int_{-\sqrt{f(x)}}^{\sqrt{f(x)}} \left[  y\, (f'(x)+ f'(x-y)) \, \partial_{x}^3 A^+(x,y) +2y W(x,y) \right] \, dydx,
\\ I_4'''' & := -  \int_\bbRr h(x) f'''(x) \int_{\bbR} 2 y W(x,y) \, dydx, 
\end{align*}
and
\[
W(x,y) := \frac { (f'''(x) + f'''(x-y)) (f(x) + f(x-y)) (f'(x) + f'(x-y))  } { [y^2+(f(x) + f(x-y))^2]^2 }.
\]
Note that $I_4''''=I_2^+$, so \eqref{2.10c} yields
\beq \lb{2.63}
|I_4''''| \le C(1+ \|f'\|_{C^1}^2)  \|f'''\|_{\tilde L^2}^2 .
\eeq


Next, the inside integrand in $I_4'$ is \eqref{2.62} below with $(x,x-y)$ in place of $(0,-y)$,  so
\beq\lb{2.63a}
\begin{split}
|I_4'| &\le  C \|f'\|_{C^1}^3  \int_{ \bbR^2} h(x)  |f'''(x)| \sum_{j=1}^2 \left(|f^{(j)}(x)| + |f^{(j)}(x-y)|\right) \min\{1,|y|^{-3} \} \, dxdyds 
\\ & 
 \le C \|f'\|_{C^1}^3 ( \|f'\|_{\tilde L^2}^2 + \|f''\|_{\tilde L^2}^2 + \|f'''\|_{\tilde L^2}^2) .
\end{split}
\eeq
We also have $I_4''= 6(J_3^+ + 6J_5^+ - 8 J_6^+) -24 J_8^+$, where
\[
J_8^+:= \int_\bbRr \tilde h(x) f'''(x) \int_{R_x}  \frac {y\, (f'(x) + f'(x-y))^4 (f(x) + f(x-y)) [(f(x) + f(x-y))^2 -  y^2]} { [y^2+(f(x) + f(x-y))^2]^4 }  \, dydx .
\]
Estimates from the last section show that
\[
|J_3^+ |+ |J_5^+| + |J_6^+| \le C  (1+\|f'\|_{C^{1,\gamma}}^3) (\|f''\|_{\tilde L^2}^2 +  \|f'''\|_{\tilde L^2}^2),
\]
while we  obtain
\begin{align*}
|J_8^+| & \le C (1+ \|f'\|_{C^1}^4)  \int_{\bbR^2 \times [0,1]}  h(x) | f'''(x)| (|f'(x)|+ | f''(x-sy)| ) \min\{1,|y|^{-4} \} \, dxdyds 
\\ & \le C (1+\|f'\|_{C^1}^4)  (\|f'\|_{\tilde L^2}^2 + \|f''\|_{\tilde L^2}^2 +  \|f'''\|_{\tilde L^2}^2)
\end{align*}
if inside the integral we split one factor $f'(x) + f'(x-y)$ into $2f'(x)$ and $f'(x-y) - f'(x)$, and then use \eqref{2.7} to estimate both integrals (the latter in the same way as $J_3^-$).  Therefore
\beq \lb{2.64}
|I_4'' | \le C(1+  \|f'\|_{C^{1,\gamma}}^4)   (\|f'\|_{\tilde L^2 }^2 + \|f''\|_{\tilde L^2 }^2+\|f'''\|_{\tilde L^2 }^2)
\eeq

It remains to estimate $I_4'''$.
Consider again $x=0$, and then we have
\beq\lb{2.62}
y(f'(0)+ f'(-y)) \, \partial_{x}^3 A^+(0,y) + 2 y W(0,y) = 18yP(-y) - 24y^3Q(-y)  - 24y U(-y) +  48y^3 V(-y)  ,
\eeq
where
\begin{align*}
U(y) &:= \frac { (f'(0) + f'(y))^4 (f(0) + f(y)) } { [y^2+(f(0) + f(y))^2]^3 },
\\ V(y) &:= \frac { (f'(0) + f'(y))^4 (f(0) + f(y)) } { [y^2+(f(0) + f(y))^2]^4 }.
\end{align*}
We can use Lemma \ref{L.2.2} to estimate the first two terms on the right-hand side of \eqref{2.62}.  Since the next lemma will deal with the other two terms, similarly to  \eqref{2.14} it follows that
\[
|I_4'''|\le C (1+\|f'\|_{C^1}^{4})  (\|f'\|_{\tilde L^2 }^2 + \|f''\|_{\tilde L^2 }^2+\|f'''\|_{\tilde L^2 }^2)
\]
by using these results for all $x\in\bbR$ instead of just $x=0$.  This, \eqref{2.65}, \eqref{2.63},  \eqref{2.63a}, and \eqref{2.64} then yield
\beq \lb{2.66}
|I_4^\pm | \le C (1+  \|f'\|_{C^{1,\gamma}}^4)   (\|f'\|_{\tilde L^2 }^2 + \|f''\|_{\tilde L^2 }^2+\|f'''\|_{\tilde L^2 }^2).
\eeq

\begin{lemma} \lb{L.2.3}
When $f(0)\le 1$, we have
\beq\lb{2.60a}
\left| \int_{- \sqrt{f(0)} }^{  \sqrt {f(0)} } \left[ 2y^3V(y) -y U(y) \right] dy \right| 
 \le  C (1+\|f'\|_{C^1}^{4}) \left( |f'(0)| + |f''(0)|   +  M_f(0) \right) .
\eeq
\end{lemma}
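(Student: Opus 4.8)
The plan is to run the same scheme as in the proofs of Lemmas \ref{L.2.1} and \ref{L.2.2}, so I will concentrate on the one new ingredient, a cancellation of the type \eqref{2.45}. If $f(0)=0$ there is nothing to prove, so assume $a:=f(0)\in(0,1]$; then $U,V$ are smooth near $y=0$ and no principal value is needed. Put $b:=f'(0)$, $c:=f''(0)$ (so $|b|\le 2\|f''\|_{L^\infty}^{1/2}\sqrt a$ by \eqref{2.5a}), and keep the notation $g_j$, $G_j$, $F_k$ of the previous proofs. The key preliminary remark is the algebraic identity
\[
2y^3 V(y)-y\,U(y)=y\,\Psi(y),\qquad \Psi(y):=\frac{(f'(0)+f'(y))^{4}\,(f(0)+f(y))\,\big[\,y^{2}-(f(0)+f(y))^{2}\,\big]}{[\,y^{2}+(f(0)+f(y))^{2}\,]^{4}}.
\]
The factor $y^{2}-(f(0)+f(y))^{2}$ changes sign, and it is this built-in cancellation that makes \eqref{2.60a} hold for the combination $2y^3V-yU$ although, as in the Remark after Lemma \ref{L.2.2}, it fails for $yU$ and $y^3V$ separately. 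Since $y\Psi$ is $y$ times an ``almost even'' function, $\int_{-\sqrt a}^{\sqrt a}[2y^3V-yU]\,dy=\int_0^{\sqrt a}y\,[\Psi(y)-\Psi(-y)]\,dy$, which reduces everything to the antisymmetrized integrand, just as \eqref{2.30} and \eqref{2.40} were reduced.

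In practice I would expand $U(y)-U(-y)$ and $V(y)-V(-y)$ by \eqref{2.6a}, exactly as $P$ and $Q$ were treated in Lemma \ref{L.2.2}, using the slots $(f'(0)+f'(y))^{4}$, $f(0)+f(y)$, and $[\,y^{2}+(f(0)+f(y))^{2}\,]^{-k}$; the differences are driven by $f'(y)-f'(-y)=2cy+G_1(y)$ and $f(y)-f(-y)=2by+G_2(y)$. This produces, as there, (a) terms carrying a factor $G_1(y)$ or $G_2(y)$; (b) replacement errors incurred when the slowly varying factors are frozen one at a time to $f(0)+f(\pm y)\to 2a$ and $f'(0)+f'(\pm y)\to 2b$; and (c) the fully frozen ``critical'' terms $\tilde U_4$, $\tilde V_4$, each a finite sum of monomials $\mathrm{const}\cdot a^{i}b^{j}c^{k}\,y\,(y^{2}+4a^{2})^{-m}$. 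Group (a) is handled as in \eqref{2.41}: using $f\ge0$ (so $y^{2}+(f(0)+f(y))^{2}\ge\max\{y,a\}^{2}$), \eqref{2.7} (which on $[-\sqrt a,\sqrt a]$ gives $|f'(0)+f'(\pm y)|\le C(1+\|f'\|_{C^1})\sqrt a$ and $|f(0)+f(\pm y)|\le C(1+\|f'\|_{C^1})a$), and $|G_j(y)|\le C y^{j+1}M_f(0)$, each such term integrated against $y\,dy$ over $[0,\sqrt a]$ is $\le C(1+\|f'\|_{C^1}^{4})M_f(0)$. Group (b) is handled as in the derivations of \eqref{2.36}, \eqref{2.38}, \eqref{2.43}, controlling each replacement error by \eqref{2.7}, \eqref{2.30c}, \eqref{2.33}, and $|b|\le 2\|f''\|_{L^\infty}^{1/2}\sqrt a$; the cumulative error integrates to $\le C(1+\|f'\|_{C^1}^{4})(|f'(0)|+|f''(0)|+M_f(0))$.

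What remains is $\int_0^{\sqrt a}[2y^3\tilde V_4(y)-y\,\tilde U_4(y)]\,dy$. Neither $\int_0^{\sqrt a}y\,\tilde U_4$ nor $\int_0^{\sqrt a}y^3\,\tilde V_4$ is bounded uniformly in $a$ as $a\to0$ (cf. \eqref{2.42a}); the point is that the specific combination telescopes, just as \eqref{2.42} together with \eqref{2.44c} gives \eqref{2.45}. Concretely, because $\tilde V_4$ is $\tilde U_4$ ``with one more power in the denominator'', one checks via $\partial_y\frac{y^{3}}{(y^{2}+4a^{2})^{m}}=\frac{3y^{2}(y^{2}+4a^{2})-2my^{4}}{(y^{2}+4a^{2})^{m+1}}$ that $y[2y^{2}\tilde V_4(y)-\tilde U_4(y)]=\partial_y R(y)$, where $R$ is an explicit linear combination of terms $\frac{a^{i}b^{j}c^{k}y^{3}}{(y^{2}+4a^{2})^{m}}$ ($m\in\{2,3,4\}$) with $R(0)=0$ (any non-integrable remainder present in the $U$- and $V$-parts individually cancels in the combination, exactly as in \eqref{2.45}). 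Then $\big|\int_0^{\sqrt a}[2y^3\tilde V_4-y\tilde U_4]\,dy\big|=|R(\sqrt a)|$, and one absorbs the negative powers of $a$ in $R(\sqrt a)$ using $|b|\le 2\|f''\|_{L^\infty}^{1/2}\sqrt a$ repeatedly — e.g. $\frac{|b|^{5}}{a^{3/2}}\le 16\|f''\|_{L^\infty}^{2}\,a^{1/2}\,|b|$ and $\frac{a|b|^{3}|c|}{a^{3/2}}\le C\|f''\|_{L^\infty}^{3/2}\,a^{1/2}\,|c|$ — to get $|R(\sqrt a)|\le C(1+\|f'\|_{C^1}^{4})(|f'(0)|+|f''(0)|)$. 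Combining the (a), (b), and critical-term bounds yields \eqref{2.60a}. I expect this last step to be the main obstacle: one must keep track of enough of the monomials of $\tilde U_4$, $\tilde V_4$ — in particular the ``bad'' pieces coming from differentiating the $[\,\cdot\,]^{-3}$ and $[\,\cdot\,]^{-4}$ denominators, which individually are not $y$-derivatives — to recognize the total-derivative structure, and then distribute the available powers $|b|\le C\sqrt a$ among the surviving monomials so that the final bound is genuinely linear in $|f'(0)|+|f''(0)|$.
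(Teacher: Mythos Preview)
Your overall scheme matches the paper's: antisymmetrize, peel off the $G_j$ pieces, freeze the slowly varying factors to reach the critical monomial sums $\tilde U_4,\tilde V_4$, and then exhibit the total-derivative structure of $2y^3\tilde V_4-y\tilde U_4$ (the paper's \eqref{2.61}). The algebraic observation about $\Psi$ and the sign-changing factor $y^2-(f(0)+f(y))^2$ is nice intuition, though you do not actually use it. Your treatment of groups (a) and (c) is essentially correct.

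There is, however, a real gap in your group (b). You claim the freezing errors integrate to $C(1+\|f'\|_{C^1}^4)(|b|+|c|+M_f(0))$, but the tools you list --- \eqref{2.7}, \eqref{2.30c}, \eqref{2.33}, and $|b|\le 2\|f''\|_{L^\infty}^{1/2}\sqrt a$ --- do \emph{not} deliver that factor here. In Lemmas \ref{L.2.1} and \ref{L.2.2} the analogous bounds \eqref{2.36}, \eqref{2.43} carried a free factor $|c|+M_f(0)$ because $S_2,S_3,P_2,P_3$ each contained $2c+g_0(-y)$ or $2cy+G_1(y)$. The terms $U_2,U_3,V_2,V_3$ do \emph{not}: their only ``small'' numerator factor is $2by+G_2(y)=f(y)-f(-y)$. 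So when, for instance, you replace the denominator $[y^2+(f(0)+f(y))^2]^3$ by $[y^2+4a^2]^3$ in $\tfrac{16b^4(2by+G_2(y))}{[y^2+(f(0)+f(y))^2]^3}$, the naive bound $b^4\le 16\|f''\|_{L^\infty}^2a^2$ together with \eqref{2.33} gives, after integration, only $C(1+\|f'\|_{C^1}^4)$ --- an unwanted $1$ inside the final parenthesis of \eqref{2.60a}.

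The paper plugs this with two extra ingredients you are missing. First, when replacing $(f'(0)+f'(-y))^4$ by $(2b)^4$, use $|f'(-y)-f'(0)|\le 2(|c|+M_f(0))\,y$ (rather than just \eqref{2.7}) to get
\[
\big|(f'(0)+f'(\pm y))^n-(2b)^n\big|\le C_n(1+\|f'\|_{C^1}^{n-1})(|c|+M_f(0))\,a^{(n-1)/2}\,y.
\]
Second, for the remaining errors that still carry a bare $b^4$, invoke the inequality
\[
b^2\le 4\,(|b|+|c|+M_f(0))\,a,
\]
which the paper proves as follows: it is trivial when $|b|\le 4a$, while if $|b|>4a$ then $f(\pm\tfrac{2a}{b})\ge 0$ forces $\tfrac{|b|}{2}\le\sup_{|y|\le 2a/|b|}|f'(y)-f'(0)|\le (|c|+M_f(0))\,\tfrac{2a}{|b|}$. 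Using this once converts $b^4\le C\|f''\|_{L^\infty}(|b|+|c|+M_f(0))\,a^2$, which restores the missing factor. Without these two observations your group-(b) bound is off by exactly the constant the lemma forbids.
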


\begin{proof}
By  \eqref{2.6a} we again have
\beq\lb{2.60}
\begin{split}
U(y)-U(-y) & = U_1(y) + U_2(y) + U_3(y) ,
\\ V(y)-V(-y) & = V_1(y) + V_2(y) + V_3(y) ,
\end{split}
\eeq
where
\begin{align*}
U_1(y) & :=  \frac {  (f(0)+f( y))  ( 2cy+G_1(y) ) \sum_{n=0}^3 (f'(0)+f'(y))^n (f'(0)+f'(-y))^{3-n}   } {[y^2+(f(0)+f( y))^2]^3},
\\ U_2(y) & := \frac {(f'(0) + f'(-y))^4  (2by+G_2(y))} {[y^2+(f(0)+f( y))^2]^3 },
\\ U_3(y) & :=  - \frac {(f'(0) + f'(-y))^4 (f(0)+f( -y)) \, H_2(y) \, F_{1}(y) \, (2by+G_2(y))} 
{[y^2+(f(0)+f( y))^2]^3 \, [y^2+(f(0)+f( -y))^2]^3}
\end{align*}
and
\begin{align*}
V_1(y) & :=  \frac {  (f(0)+f( y))  ( 2cy+G_1(y) ) \sum_{n=0}^3 (f'(0)+f'(y))^n (f'(0)+f'(-y))^{3-n}   } {[y^2+(f(0)+f( y))^2]^4},
\\ V_2(y) & := \frac {(f'(0) + f'(-y))^4  (2by+G_2(y))} {[y^2+(f(0)+f( y))^2]^4 },
\\ V_3(y) & :=  - \frac {(f'(0) + f'(-y))^4 (f(0)+f( -y)) \, H_3(y) \, F_{1}(y) \, (2by+G_2(y))} 
{[y^2+(f(0)+f( y))^2]^4 \, [y^2+(f(0)+f( -y))^2]^4},
\end{align*}
with
\[
H_j(y):= \sum_{n=0}^j [y^2+(f(0)+f( y))^2]^n \, [y^2+(f(0)+f( -y))^2]^{j-n}
\]
(so in particular,  $H_1(y)=2y^2+F_2(y)$).
None of these terms is analogous to $S_1$ and $P_1$ because $U$ and $V$ do not have the $f''(0)+f''(y)$ term.  Instead $U_1+U_2+U_3$ and $V_1+V_2+V_3$ are treated the same way as $S_2+S_3$, $P_2+P_3$, and $Q_2+Q_3$ in Lemmas \ref{L.2.1} and \ref{L.2.2}.  We leave some of the details to the reader, but provide here the main estimates, discuss how to deal with an extra complication, and  show cancellation of the critical leading terms
 \begin{align*}
 \tilde U_4(y) &:= \frac { 128ab^3cy }{[y^2+4a^2]^3} + \frac { 32b^5y }{[y^2+4a^2]^3}  - \frac {768a^2b^5y } {[y^2+4a^2]^4}
\\ & = 32b^3(4ac+b^2) \frac { y^3-4a^2y }{[y^2+4a^2]^4} + 512a^2b^3 \frac {(2ac -b^2) y }{[y^2+4a^2]^4},
\\  \tilde V_4(y) &:= \frac { 128ab^3cy }{[y^2+4a^2]^4} + \frac { 32b^5y }{[y^2+4a^2]^4}  - \frac {1024a^2b^5y } {[y^2+4a^2]^5}
\\ & = 32b^3(4ac+b^2) \frac { 3y^3-20a^2y }{ 3[y^2+4a^2]^5} + {2048a^2b^3} \frac {(2ac -b^2) y }{3 [y^2+4a^2]^5}.
\end{align*}
These were obtained from $U_1+U_2+U_3$ and $V_1+V_2+V_3$ by dropping all the $G_j$ terms, and replacing all the $f(0)+f(\pm y)$ and $f'(0)+f'(\pm y)$ terms by $2a$ and $2b$, respectively.  

Doing the latter but keeping the $G_j$ terms defines terms $U_4$ and $V_4$, and we  first obtain an estimate on $U_1+U_2+U_3 - U_4$.  We can do this similarly to the analogous estimates for $S_2+S_3 - S_4$ and  $P_2+P_3 - P_4$, using \eqref{2.7}, \eqref{2.30c}, \eqref{2.33}, \eqref{2.35b}, and \eqref{2.42b}.  But if we simply estimate each $f'(0)+f'(\pm y)$  by $4(1+\|f'\|_{C^1})\sqrt a$ and each $b$ by $2 \|f''\|_{L^\infty}^{1/2} \sqrt a$, our final estimate will lack the term $|c|+M_f(0)$ from \eqref{2.36} because $U_2, U_3, V_2, V_3$ do not contain the factor $2c+g_0(-y)$ or $2cy+G_1(y)$, which can be estimated by $2(|c|+M_f(0)\sqrt a)$ and $2(|c|+M_f(0)\sqrt a)y$, respectively.  (This would add 1 to the last parenthesis in \eqref{2.60a}, which is not an issue when we consider solutions with $f'''\in \tilde L^2(\bbR)$ but would become one for those with $f'''\in L^2(\bbR)$, relevant to \eqref{1.11}.)  Instead, when we first replace $(f'(0)+f'(- y))^4$ by $(2b)^4$ in either of these terms and estimate the difference, we need to use
\[
|f'(-y)-f'(0)| \le 2(|c|+ M_f(0)\sqrt a )y \le 2 (|c|+ M_f(0) )y,
\]
which replaces the second line in \eqref{2.42b} by
\[
\left| (f'(0)+f'(\pm y))^n -(2b)^n \right|  \le C_n (1+\|f'\|_{C^1}^{n-1}) (|c|+ M_f(0) ) a^{(n-1)/2}   y .
\]
  In the case of $U_2$ we then also need to estimate
\[
\frac { 16b^4 (2by+G_2(y))} {[y^2+(f(0)+f( y))^2]^3 } - \frac { 16b^4  (2by+G_2(y))} {[y^2+4a^2]^3 },
\]
but doing this by only using  \eqref{2.33} and $|b|\le 2 \|f''\|_{L^\infty}^{1/2} \sqrt a$ is again insufficient.  However, we do obtain the desired estimate, with $|b|+ |c|+M_f(0)$ in place of $|c|+M_f(0)$, if we also employ the bound
\[
b^2 \le 4 (|b|+ |c| + M_f(0))a
\]
once.  This clearly holds when $|b|\le 4a$, while if $|b|> 4a$, then $f(\pm \frac {2a}b)\ge 0$ forces
\[
\frac b2\le \sup_{|y|\le 2a/|b|} |f'(y)-f'(0)| \le (|c|+ M_f(0) )2ab^{-1} .
\]
With a similar adjustment when estimating $U_3$,  we eventually obtain
\begin{align*}
| U_1(y)  +U_2(y)+U_3(y) - U_4(y)|  
 & \le C (1+\|f'\|_{C^1}^{4}) (|b|+|c| + M_f(0))     \frac{  a^2} {\max\{y,a\}^4} ,
\\ | V_1(y)  +V_2(y)+V_3(y) - V_4(y)|  
&  \le C (1+\|f'\|_{C^1}^{4}) (|b|+|c| + M_f(0))     \frac{  a^2} {\max\{y,a\}^6} 
\end{align*}
for $y\in[0,\sqrt a]$, so that
\[
 \int_0^{ \sqrt a} y | U_1(y) + U_2(y) +U_3(y) - U_4(y)| dy  \le  C (1+\|f'\|_{C^1}^{4}) (|b|+|c| + M_f(0))
\]
and
\[
 \int_0^{ \sqrt a} y^3 | V_1(y) + V_2(y) +V_3(y) - V_4(y)| dy  \le  C (1+\|f'\|_{C^1}^{4}) (|b|+|c| + M_f(0)).
\]
We easily also get
\begin{align*}
|U_4(y) - \tilde U_4(y)| & \le C(1+\|f'\|_{C^1}^{2})  \frac{ a^3} {\max\{y,a\}^6}  \int_{-y}^{y}|f'''(z)| dz 
\le C(1+\|f'\|_{C^1}^{2})  \frac{M_f(0) a} {\max\{y,a\}^3} ,
\\ |V_4(y) - \tilde V_4(y)| & \le C(1+\|f'\|_{C^1}^{2})  \frac{ a^3} {\max\{y,a\}^8}  \int_{-y}^{y}|f'''(z)| dz 
\le C(1+\|f'\|_{C^1}^{2})  \frac{M_f(0) a} {\max\{y,a\}^5}
\end{align*}
for $y\in[0,\sqrt a]$, so
\[
 \int_0^{ \sqrt a} \left( y |U_4(y) - \tilde U_4(y)| + y^3 |V_4(y) - \tilde V_4(y)| \right) dy
\le  C(1+\|f'\|_{C^1}^{2}) M_f(0).
\]

Finally, we now  have the crucial identity
\ \begin{align}
 & \frac{2 y^3  \tilde V_4(y)  - y\tilde U_4(y)}{32}  = -\partial_y \frac {  2b^3(4ac+b^2) y^5 }{3 [y^2+4a^2]^4} + \partial_y \frac {  b^3(4ac+b^2) y^3 }{3 [y^2+4a^2]^3}  
 +16a^2b^3 (2ac -b^2)\frac { 5y^4 - 12a^2y^2 }{ 3[y^2+4a^2]^5} \notag
\\ & \qquad\qquad\qquad = -\partial_y \frac {  2b^3(4ac+b^2) y^5 }{3 [y^2+4a^2]^4} + \partial_y \frac {  b^3(4ac+b^2) y^3 }{3 [y^2+4a^2]^3}  
-\partial_y \frac {  16a^2b^3 (2ac -b^2) y^3 }{3 [y^2+4a^2]^4}  , \lb{2.61}
\end{align}
hence
\[
\left| \int_0^{ \sqrt a} \left[ 2 y^3\tilde V_4(y) - y\tilde U_4(y)  \right] dy \right| 
\le  C b^2 \|f''\|_{L^\infty}^{3/2} (1+a)  \le C \|f''\|_{L^\infty}^{2} |b|.
\]
The result now follows from the above bounds and \eqref{2.60}.
\end{proof}


\section{A Priori Estimates for an Approximating Family of  Equations} \lb{S7}

We will find solutions to \eqref{1.5} as $\eps\to 0$ limits of solutions to the mollified problems
\begin{equation} \lb{7.1}
{\partial_t f_\eps} (x,t) = \phi_\eps * PV \int_\bbR \sum_\pm \frac{y\, (\partial_x(\phi_\eps*f_\eps)(x,t) \pm \partial_x(\phi_\eps*f_\eps)(x-y,t))}{y^2+((\phi_\eps*f_\eps)(x,t)\pm (\phi_\eps*f_\eps)(x-y,t))^2}   dy,
\end{equation}
where $\phi_\eps(x):=\frac 1\eps \phi(\frac x\eps)$ for $\eps\in (0,1)$ and $\phi\in C^\infty(\bbR)$ vanishes on $\bbR\setminus [-1,1]$, is even, decreasing on $[0,1]$ with $\phi(0)=1$ and $\|\phi'\|_{L^\infty}\le 2$, and $\int_{-1}^1 \phi(x)dx=1$.  
Dropping again $t$ in the arguments, denoting $g':=\partial_x g$,  and letting $F_\eps:=\phi_\eps*f_\eps$ for $f_\eps\ge 0$, we obtain
\[
\frac d{dt}  \left\| \sqrt{h}\, f_\eps''' \right\|_{L^2}^2 
 = 2( I_{1'}^+ + I_{1'}^- + I_{4'}^+ + I_{4'}^- ) + 6(I_{2'}^+ + I_{2'}^- + I_{3'}^+ + I_{3'}^-)  ,
\]
where with $A_\eps^\pm(x,y):= [y^2+(F_\eps(x)\pm F_\eps(x-y))^2]^{-1}$ we have
\begin{align*}
I_{1'}^\pm & := 
\int_\bbR  h(x) f_\eps'''(x) \, \phi_\eps*   PV \int_\bbR \frac{y\, (F_\eps''''(x)\pm F_\eps''''(x-y))}{y^2+(F_\eps(x)\pm F_\eps(x-y))^2} \, dydx ,
\\  I_{2'}^\pm & := 
\int_\bbR h(x)  f_\eps'''(x) \, \phi_\eps* \int_\bbR  y\, (F_\eps'''(x)\pm F_\eps'''(x-y)) \, \partial_x A_\eps^\pm(x,y) \, dydx ,
\\  I_{3'}^\pm & := 
\int_\bbR  h(x) f_\eps'''(x) \, \phi_\eps* \int_\bbR  y\, (F_\eps''(x)\pm F_\eps''(x-y)) \, \partial_{x}^2 A_\eps^\pm(x,y) \, dydx ,
\\  I_{4'}^\pm & := 
\int_\bbR  h(x) f_\eps'''(x) \, \phi_\eps* \int_\bbR  y\, (F_\eps'(x)\pm F_\eps'(x-y)) \, \partial_{x}^3 A_\eps^\pm(x,y) \, dydx. 
\end{align*}
As explained at the end of Section \ref{S2}, we can drop PV in the last three integrals, and below we will also drop it  in $I_{1'}^\pm$ (where it is only needed as $|y|\to\infty$).
Note that now $F_\eps$ does have the degree of regularity required for the arguments in the previous sections (it is in fact smooth), which carry over with the following adjustments.

Let us first consider $I_{1'}^\pm$.  From $\int_\bbR F(\phi*G) dx = \int_\bbR (\phi*F)G dx$ for even $\phi$ we have
\begin{align*}
I_{1'}^\pm  = J_{1'}^\pm + J_{0'}^\pm  & :=  
\int_\bbR  h(x) F_\eps'''(x)  \int_\bbR \frac{y\, (F_\eps''''(x)\pm F_\eps''''(x-y))}{y^2+(F_\eps(x)\pm F_\eps(x-y))^2} \, dydx 
\\ & + \int_{\bbR^2}  (h(z)-h(x)) \phi_\eps(x-z) f_\eps'''(z)  \int_\bbR \frac{y\, (F_\eps''''(x)\pm F_\eps''''(x-y))}{y^2+(F_\eps(x)\pm F_\eps(x-y))^2} \, dydx dz.
\end{align*}
We can estimate $J_{1'}^\pm$ in the same way as $I_1^\pm$, and similarly to \eqref{2.10} we  obtain
\beq \lb{7.2}
J_{1'}^+ + J_{1'}^- \le C(1+  \|f_\eps\|_{C^{2,\gamma}_\gamma}^2 ) \| f_\eps'''\|_{\tilde L^2}^2
\eeq
with some $\eps$-independent $C$, where we also used
\[
\| \phi_\eps *g\|_{\tilde L^2}^2 \le 2 \| g\|_{\tilde L^2}^2.
\]

On the other hand, we have 
\beq \lb{7.3}
J_{0'}^\pm = \int_{\bbR^2}  f_\eps'''(z) f_\eps'''(v)  \int_\bbR (h(z)-h(x)) \phi_\eps(x-z)  \int_\bbR \frac{y\, (\phi_\eps'(x-v)\pm \phi_\eps'(x-y-v))}{y^2+(F_\eps(x)\pm F_\eps(x-y))^2} \, dydx dzdv.
\eeq
From \eqref{2.1} 
we obtain 
\[
\left| \int_\bbR \frac{y}{y^2+(F_\eps(x)\pm F_\eps(x-y))^2} \, dy \right|
\le C \|F_\eps\|_{C^2_\gamma} \le C \|f_\eps\|_{C^2_\gamma},
\]
so 
\beq \lb{7.4}
\left| \int_\bbR \frac{y \phi_\eps'(x-v)}{y^2+(F_\eps(x)\pm F_\eps(x-y))^2} \, dy \right|
 \le \frac {C \|f_\eps\|_{C^2_\gamma}} {\eps^{2}}  \chi_{[0,\eps]}(|x-v|)
\eeq
because $\phi_\eps(x-z)=0$ when $|x-z|\ge \eps$.  When $|x-v|\ge 2\eps$, then 
\[
\left| \frac d{dy} \,\frac{y }{y^2+(F_\eps(x)\pm F_\eps(x-y))^2} \right| \le \frac {C(1+\|f_\eps'\|_{L^\infty})} {|x-v|^2}
\]
for all $y\in[x-v-\eps, x-v+\eps]$, so from  
\[
\int_{x-v-\eps}^{x-v+\eps}\phi_\eps'(x-y-v)=0 \qquad\text{and}\qquad \int_{x-v-\eps}^{x-v+\eps}|\phi_\eps'(x-y-v)|=2\eps^{-1}
\]
we get
\beq \lb{7.5}
\left| \int_\bbR \frac{y \phi_\eps'(x-y-v)}{y^2+(F_\eps(x)\pm F_\eps(x-y))^2} \, dy \right| 
\le  \frac {C(1+ \|f_\eps'\|_{L^\infty} )}{ |x-v|^2}.
\eeq
When $|x-v|\le 2\eps$, then 
from $|\phi_\eps'(x-y-v)-\phi_\eps'(x-v)| \le C \eps^{-3}|y|$ and the fact that \eqref{7.4} also holds with integration  over $[-y',y']$ for any $y'\ge 0$ instead of over $\bbR$,
\[
\left| \int_\bbR \frac{y \phi_\eps'(x-y-v)}{y^2+(F_\eps(x)\pm F_\eps(x-y))^2} \, dy \right|
= \left| \int_{-3\eps}^{3\eps} \frac{y \phi_\eps'(x-y-v)}{y^2+(F_\eps(x)\pm F_\eps(x-y))^2} \, dy \right|
 \le \frac {C(1+ \|f_\eps\|_{C^2_\gamma})} {\eps^{2}}.
\]
From this, \eqref{7.3}, \eqref{7.4}, \eqref{7.5}, and the properties of $h$ it now follows that
\[
|J_{0'}^\pm| \le \int_{x_0-4-\eps}^{x_0+4+\eps}  | f_\eps'''(z)| \int_{\bbR} |f_\eps'''(v)| \frac{C\eps (1+ \|f_\eps\|_{C^2_\gamma}) }{\max\{|z-v|,\eps\}^2} dvdz \le C (1+ \|f_\eps\|_{C^2_\gamma}) \| f_\eps'''\|_{\tilde L^2}^2
\]
because
\[
\int_\bbR \frac{\eps |f_\eps'''(v)| }{\max\{|z-v|,\eps\}^2} \,dv
\le \sum_{n=1}^\infty \frac\eps{2^{2(n-1)}\eps^2} \int_{-2^n\eps}^{2^n\eps} |f_\eps'''(z+w)| \,dw \le 4 M_{f_\eps}(z).
\]
Hence this and \eqref{7.2} show that
\beq \lb{7.6}
I_{1'}^+ + I_{1'}^- \le C(1+ \|f_\eps\|_{C^{2,\gamma}_\gamma}^2 ) \| f_\eps'''\|_{\tilde L^2}^2
\eeq

As for $I_{2'}^\pm$, similarly to \eqref{2.11} and \eqref{2.10c} we will obtain
\beq \lb{7.7}
|I_{2'}^\pm| \le C(1+ \|f_\eps'\|_{C^{1,\gamma}}^2 )  \| f_\eps'''\|_{\tilde L^2}^2   
\eeq
if we can prove the same bound for
\begin{align*}
J_{2'}^\pm & := \int_{\bbR^2}  (h(z)-h(x)) \phi_\eps(x-z) f_\eps'''(z)  \int_\bbR  y\, (F_\eps'''(x)\pm F_\eps'''(x-y)) \, \partial_x A_\eps^\pm(x,y) \, dydxdz
\\ & =  2 \int_{\bbR^2}  f_\eps'''(z) f_\eps'''(v)  \int_\bbR (h(z)-h(x)) \phi_\eps(x-z) 
\\ & \qquad  \int_\bbR \frac{y\, (\phi_\eps(x-v)\pm \phi_\eps(x-y-v))(F_\eps(x)\pm F_\eps(x-y))(F_\eps'(x)\pm F_\eps'(x-y))}{[y^2+(F_\eps(x)\pm F_\eps(x-y))^2]^2} \, dydx dzdv.
\end{align*}
Similarly to \eqref{7.3}, this will follow once we bound the inside integral by
\[
\frac {C  (1+  \|f_\eps'\|_{C^{1,\gamma}}^2 )} {\max\{|x-v|,\eps\}^{2}},
\]
and we do this analogously.  First,  the estimation of $K_1^\pm$ from \eqref{2.4} yields 
\beq \lb{7.8a}
\left| \int_{S\cup(-S)} \frac{y (F_\eps(x)\pm F_\eps(x-y))(F_\eps'(x)\pm F_\eps'(x-y))}{[y^2+(F_\eps(x)\pm F_\eps(x-y))^2]^2} \, dy \right|
\le C(1+ \|F_\eps'\|_{C^{1,\gamma}}^2 )  \le C(1+ \|f_\eps'\|_{C^{1,\gamma}}^2 ) 
\eeq
for any $S\subseteq[0,\infty)$, so 
\beq \lb{7.8}
\left| \int_\bbR \frac{y \phi_\eps(x-v) (F_\eps(x)\pm F_\eps(x-y))(F_\eps'(x)\pm F_\eps'(x-y))}{[y^2+(F_\eps(x)\pm F_\eps(x-y))^2]^2} \, dy \right|
\le   \frac {C  (1+  \|f_\eps'\|_{C^{1,\gamma}}^2 )} {\eps} \chi_{[0,\eps]}(|x-v|).
\eeq
When $|x-v|\ge 2\eps$, then 
\[
\left| \frac{y (F_\eps(x)\pm F_\eps(x-y))(F_\eps'(x)\pm F_\eps'(x-y))}{[y^2+(F_\eps(x)\pm F_\eps(x-y))^2]^2} \right| 
\le \left| \frac{F_\eps'(x)\pm F_\eps'(x-y)}{y^2+(F_\eps(x)\pm F_\eps(x-y))^2} \right| 
\le \frac {C\|f_\eps'\|_{L^\infty}} {|x-v|^2}
\]
for all $y\in[x-v-\eps, x-v+\eps]$, so
\beq \lb{7.9}
\left| \int_\bbR \frac{y \phi_\eps(x-y-v) (F_\eps(x)\pm F_\eps(x-y))(F_\eps'(x)\pm F_\eps'(x-y))}{[y^2+(F_\eps(x)\pm F_\eps(x-y))^2]^2} \, dy \right|
\le  \frac {C \|f_\eps'\|_{L^\infty}}{ |x-v|^2}.
\eeq
When $|x-v|\le 2\eps$, then from $|\phi_\eps(x-y-v)-\phi_\eps(x-v)| \le C \eps^{-2}|y|$ and \eqref{7.8} we obtain
\begin{align}
& \left| \int_\bbR  \frac{y \phi_\eps(x-y-v) (F_\eps(x)\pm F_\eps(x-y))(F_\eps'(x)\pm F_\eps'(x-y))}{[y^2+(F_\eps(x)\pm F_\eps(x-y))^2]^2} \, dy \right|   \lb{7.9a}
 \\ & \qquad = \left| \int_{-3\eps}^{3\eps} \frac{y \phi_\eps(x-y-v) (F_\eps(x)\pm F_\eps(x-y))(F_\eps'(x)\pm F_\eps'(x-y))}{[y^2+(F_\eps(x)\pm F_\eps(x-y))^2]^2} \, dy \right|
 \le \frac {C(1+  \|f_\eps'\|_{C^{1,\gamma}}^2 )} {\eps^{3/2}}. \notag
\end{align}
This is immediate when $\pm$ is $-$, while in the other case we use \eqref{2.7} to bound the last integral with $\phi_\eps(x-y-v)-\phi_\eps(x-v)$ in place of $\phi_\eps(x-y-v)$ by
\[
\frac {C (1+ \|F_\eps'\|_{C^1})}{\eps^2} \int_0^{3\eps}  \frac{ \max\{ y, \sqrt{ F_\eps (x)} \} } { \max\{ y,  F_\eps (x) \} } dy \le \frac {C (1+ \|f_\eps'\|_{C^1})}{\eps^{3/2}}.
\]
The required bound on $J_{2'}^\pm$ now follows from the above estimates.

Integral $I_{3'}^\pm$ is estimated in the same way as $I_{3}^\pm$  because we did not use integration by parts in $x$ or  symmetrization in the process; instead, estimates on all the relevant integrals immediately followed after we obtained appropriate bounds on their inside integrals or integrands.
The first inequality in \eqref{2.11a} becomes
\[
|J_{3'}^-|
 \le C \|F_\eps'\|_{C^1}^2  \int_{ \bbR^2 \times [0,1]} h(x)  |f_\eps'''(x)| \, (\phi_\eps * |F_\eps'''|)(x-sy) \min\{1,|y|^{-2} \} \, dxdyds, 
\]
which yields the same estimate on $J_{3'}^-$ as we obtained for $J_{3}^-$, with $f_\eps$ in place of $f$.  Bounds on the terms analogous to $J_{3}^+$ and $J_{4}^\pm$ are adjusted in the same way.  The term analogous to $J_5^+$ will be bounded by
\[
 C (1+\|F_\eps'\|_{C^{1,\gamma}}^3)  \int_\bbRr h(x)  |f_\eps'''(x)| (\phi_\eps*|F_\eps''|)(x)    \,dx 
 \le  C  (1+\|f_\eps'\|_{C^{1,\gamma}}^3) \|f_\eps''\|_{\tilde L^2} \|f_\eps'''\|_{\tilde L^2},
\]
those corresponding to $J_6^+$, $J_7^+$, and $I'_3$ are treated the same way, and  \eqref{2.14} becomes
\begin{align*}
|I_{3'}'''| & \le C (1+\|F_\eps'\|_{C^1}^{4})  \int_\bbRr h(x)  |f_\eps'''(x)| (\phi_\eps*(|F_\eps''|+M_{F_\eps}))(x)    \,dx
\\ & \le C (1+\|f_\eps'\|_{C^1}^{4})  (\|f_\eps''\|_{\tilde L^2 }^2 +\|f_\eps'''\|_{\tilde L^2 }^2 ) .
\end{align*}
However, in all those terms that constitute $I_{3'}^+$, the estimates on the inside integrals/integrands carry over only when $|f(x+z)-f(x)|\le\frac 12$ for all $|z|\le\eps$ because in the relevant bounds in Section \ref{S5} we assumed that $f(x)$ is away from either 0 or $\infty$.  We therefore need to assume here that $\eps\le \frac 12\|f_\eps'\|_{L^\infty}^{-1}$.

The same adjustments apply to the terms constituting $I_{4'}^\pm$  (except for $I_{2'}^\pm$, which is bounded by \eqref{7.7}) in a decomposition analogous to that of $I_4^\pm$ in Section \ref{S6}. These estimates, together with \eqref{7.6} and \eqref{7.7}, finally yield
\beq\lb{7.10}
\frac d{dt}  \left\| \sqrt{h}\, f_\eps''' \right\|_{L^2}^2 
\le C (1+  \|f_\eps\|_{C^{2,\gamma}_\gamma}^4 ) \, (\|f_\eps'\|_{\tilde L^2 }^2 + \|f_\eps''\|_{\tilde L^2 }^2 + \|f_\eps'''\|_{\tilde L^2 }^2) 
\eeq
in place of \eqref{2.3}, provided $\eps\le \frac 12\|f_\eps'\|_{L^\infty}^{-1}$.  

\section{Uniqueness of Solutions} \lb{S8}

Assume that $f_1,f_2\ge 0$ are classical solutions to \eqref{1.5} on $\bbR\times[0,T]$ that both satisfy \eqref{1.7}.  Then the second claim in Theorem \ref{T.1.1}(ii) holds for $f_j$ ($j=1,2$)  because \eqref{2.70a} yields
\beq \lb{8.7}
\|f_j(\cdot,\tau)-f_j(\cdot,0)\|_{L^\infty} \le C \int_0^\tau s_j(t)(1+ s_j(t)) \,dt
\eeq
for all $\tau\in[0,T]$, where $s_j(t):= \|f_j(\cdot,t)\|_{\tilde H^3_\gamma(\bbR)}^2$.
Therefore $f:=f_1-f_2\in L^\infty([0,T];\tilde H^3(\bbR))$ whenever $f_1(\cdot,0)- f_2(\cdot,0)\in \tilde H^3(\bbR)$, and for all $t\in[0,T)$ we have
\[
\frac d{dt}  \left\| \sqrt{h}\, f (\cdot,t) \right\|_{L^2}^2
=  2 (I_5^+ + I_5^- + I_6^+ + I_6^-  )
\]
with $h:=h_0(\cdot-x_0)$ as above, where (we again drop $t$ in the notation and let $g':=\partial_x g$)
\begin{align*}
 I_5^\pm &:= \int_\bbRr h(x)  f(x)  \, PV \int_\bbR \frac {y\, (f'(x)\pm f'(x-y))}{y^2+(f_1(x)\pm f_1(x-y))^2} \, dydx ,
 \\ I_6^\pm &:= \int_\bbRr h(x)  f(x)  \int_\bbR  y B^\pm(x,x-y) \, dydx = \int_\bbRr h(x)  f(x)  \int_\bbR  (x-y) B^\pm(x,y) \, dydx ,
\end{align*}
and with $f_3:=f_1+f_2$ we  have
\begin{align*}
B^\pm & (x,y) := \frac { f_2'(x)\pm f_2'(y)} {(x-y)^2+(f_1(x)\pm f_1(y))^2} 
- \frac {f_2'(x)\pm f_2'(y)} {(x-y)^2+(f_2(x)\pm f_2(y))^2}
\\ & =  - \frac {  (f_2'(x)\pm f_2'(y)) \,(f_3(x) \pm f_3(y) ) \, (f(x)\pm f(y))} {[(x-y)^2+(f_1(x)\pm f_1(y))^2]\, [(x-y)^2+(f_2(x)\pm f_2(y))^2]} =: B_0^\pm(x,y) \, (f(x)\pm f(y)).
\end{align*}
The arguments in Section \ref{S3} estimating $I_1^\pm$ identically apply to $I_5^\pm$ with $(f,f_1)$ in place of $(g,f)$, and just as \eqref{2.10} they now prove
\beq \lb{8.2}
I_5^+ + I_5^- \le C(1+  \|f_1\|_{C^{2,\gamma}_\gamma}^2 ) \| f\|_{\tilde L^2_{x_0}}^2   .
\eeq

Symmetrization shows that
\begin{align*}
\int_\bbRr h(x)  f(x) \, PV  \int_\bbR & (x-y) B_0^\pm(x,y)f(y) \, dydx
= \frac 12 \int_{\bbRr^2} (h(x)-h(y))  (x-y) B_0^\pm(x,y)  f(x)  f(y) \, dydx,
\end{align*}
so with $E:=[x_0-4,x_0+4]$ we have
\begin{align*}
\left| \int_\bbRr h(x)  f(x) \, PV  \int_\bbR  (x-y) B_0^-(x,y)f(y) \, dydx \right|
& \le C \|f_2'\|_{C^1} \int_{\bbRr^2} \frac{\chi_{E}(x)+ \chi_{E}(y)} {\max\{1,|x-y|^2  \}} | f(x)| \,|  f(y)| \, dydx
\\ \le C \|f_2'\|_{C^1} \int_{\bbRr^2} \frac{\chi_{E}(x)+ \chi_{E}(y)} {\max\{1,|x-y|^2  \}}f(x)^2 \, dydx
& \le C \|f_2'\|_{C^1}  \| f\|_{\tilde L^2_{x_0}}^2   .
\end{align*}

On the other hand, letting $\hat y:=\min\{|y|,1\}$, $a_j:=f_j(x)$ ($j=1,2$),  $a_3:=\max\{a_1,a_2\}$, and $\lambda:=1+\|f_1'\|_{C^1}+ \|f_2'\|_{C^1}$, 
changing variables back via $y\leftrightarrow x-y$, and using also \eqref{2.7} yields
\begin{align*}
& \left| \int_\bbRr h(x)  f(x) \, PV  \int_\bbR  (x-y) B_0^+(x,y)f(y) \, dydx \right|
\\ & \qquad \qquad \le C \lambda^2 \int_{\bbRr^2} \frac{ (\chi_E(x)+\chi_E(x-y)) \hat y  |y| \max\{\hat y,\sqrt{\hat a_2} \} \max\{\hat y|y|,a_3 \} } {\max\{|y|,a_1 \}^2  \max\{|y|,a_2 \}^2 } | f(x)| \,|  f(x-y)| \, dydx,
\end{align*}
where we used $|h(x)-h(x-y)|\le  \hat y(\chi_E(x)+\chi_E(x-y))$ and  
\beq\lb{8.2b}
f_j(x-y)\le C (1+\|f_j'\|_{C^1}) (a_j+\min\{y^2,|y|\}) \le C (1+ \|f_j'\|_{C^1}) \max\{\hat y|y|,a_j \} 
\eeq
 for $j=1,2$.  If we employ $| f(x)| \,|  f(x-y)|\le f(x)^2+f(x-y)^2$, split the integral in two accordingly, and change variables $x\leftrightarrow x-y$ in the second, we find that 
 \begin{align*}
& \left| \int_\bbRr h(x)  f(x) \, PV  \int_\bbR  (x-y) B_0^+(x,y)f(y) \, dydx \right|
\\ & \qquad \qquad \le C \lambda^2 \int_{\bbR} f(x)^2 \int_{\bbRr} \frac{ (\chi_E(x)+\chi_E(x-y)) \hat y  |y| \max\{\hat y,\sqrt{\hat a_2} \} \max\{\hat y|y|,a_3 \} } {\max\{|y|,a_1 \}^2  \max\{|y|,a_2 \}^2 }  \, dydx,
\end{align*}
 so we need to estimate the inside integral.  
 It is clearly bounded by
\[
4\int_{0}^1 \frac{ \max\{ y,\sqrt{a_3} \}^3} { \max\{y,a_3 \}^2 } dy 
+ 4\int_1^\infty  \frac{ 1 } { y^2 } dy \le C
\]
for any $x\in\bbR$,
while for $|x-x_0|\ge 5$ it is bounded by
\[
 \int_{|x-x_0|-4}^{|x-x_0|+4} \frac{   1  } {y^2}  \, dy \le \frac C{|x-x_0|^2}.
\]
From the above estimates it now follows that
\beq\lb{8.2a}
\left| \int_\bbRr h(x)  f(x) \, PV  \int_\bbR  (x-y) B_0^\pm (x,y)f(y) \, dydx \right| \le C\lambda^2 \| f\|_{\tilde L^2_{x_0}}^2 . 
\eeq

Finally, we will show that
\beq\lb{8.0}
\left| \int_\bbRr h(x)  f(x)^2  \, PV \int_\bbR  (x-y) B_0^\pm(x,y) \, dydx \right| 
\le C \lambda_\gamma^2  \| f\|_{\tilde L^2_{x_0}}^2 ,
\eeq
with  $\lambda_\gamma:=1+\|f_1\|_{C^{2,\gamma}_\gamma}+ \|f_2\|_{C^{2,\gamma}_\gamma}$
This and \eqref{8.2a} then yield
\beq\lb{8.1a}
|I_6^\pm| \le C(1+\|f_1\|_{C^{2,\gamma}_\gamma}^2+ \|f_2\|_{C^{2,\gamma}_\gamma}^2) \| f\|_{\tilde L^2_{x_0}}^2,
\eeq
which together with \eqref{8.2} implies
\beq\lb{8.1}
\frac d{dt}  \left\| \sqrt{h}\, f (\cdot,t)\right\|_{L^2}^2 \le C \left(1+  \|f_1(\cdot,t)\|_{C^{2,\gamma}_\gamma}^2 + \|f_2(\cdot,t)\|_{C^{2,\gamma}_\gamma}^2 \right) \| f(\cdot,t)\|_{\tilde L^2_{x_0}}^2   .
\eeq
Uniformity of \eqref{8.1} in $x_0\in\bbR$ (recall that $h=h_0(\cdot -x_0)=:h_{x_0}$) and 
\[
\|f(\cdot,t)\|_{\tilde L^2} \le C \sup_{x_0\in\bbR}  \left\| \sqrt{h_{x_0}}\, f(\cdot,t) \right\|_{L^2}
\]
yield
\beq\lb{8.0b}
\frac d{dt}  \sup_{x_0\in\bbR}  \left\| \sqrt{h_{x_0}}\, f(\cdot,t) \right\|_{L^2} 
\le C \left(1+  \|f_1(\cdot,t)\|_{C^{2,\gamma}_\gamma}^2 + \|f_2(\cdot,t)\|_{C^{2,\gamma}_\gamma}^2 \right) \sup_{x_0\in\bbR}  \left\| \sqrt{h_{x_0}}\, f(\cdot,t) \right\|_{L^2}   
\eeq
 for all $t\in[0,T]$.
So Gr\" onwall's inequality shows that $f_1\equiv f_2$ whenever $f_1(\cdot,0)\equiv f_2(\cdot,0)$, proving the first claim in Theorem \ref{T.1.1}(ii).  From $\|\psi\|_{\tilde H^3_\gamma}<\infty$ and \eqref{2.70a} we also obtain \eqref{1.9}.



To prove \eqref{8.0}, it clearly suffices to show 
\beq\lb{8.0a}
\left| \, PV  \int_{S\cup(-S)}  y B_0^\pm(x,x-y) \, dy \right| \le C \lambda_\gamma^2
\eeq
for any $S\subseteq[0,\infty)$.  Using a version of \eqref{2.6a} with an extra term in the  denominator yields
\beq\lb{8.4}
y |B_0^+(x,x+y)- B_0^+(x,x-y)| 
 \le C \lambda B_1(x,y) + C \lambda^2 \sum_{n=2}^4 B_n(x,y)
\eeq
for $y\ge 0$, where (recall \eqref{2.7}, $a_j:=f_j(x)$ ($j=1,2$), and  $a_3:=\max\{a_1,a_2\}$)
\begin{align*}
B_1(x,y) & := \frac{y^2 \,  (f_3(x) + f_3(x-y)) }   {\max\{ y,a_1\}^2 \max\{ y,a_2\}^2} ,
\\ B_2(x,y) & := \frac{y^2 \, \max \big\{ \hat y, \hat a_2^{1/2} \big\} \, \max \big\{ \hat y, \hat a_3^{1/2} \big\} }  {\max\{ y,a_1\}^2 \max\{ y,a_2\}^2} ,
\\ B_3(x,y) & := \frac{y^2 \, \max \big\{ \hat y, \hat a_2^{1/2} \big\} \, (f_3(x) + f_3(x+y) ) \, (2f_1(x) + f_1(x+y) + f_1(x-y)) \,  \max \big\{ \hat y, \hat a_1^{1/2} \big\}  }  {[y^2+(f_1(x)+ f_1(x-y))^2]\, [y^2+(f_1(x)+ f_1(x+y))^2] \,  [y^2+(f_2(x)+ f_2(x-y))^2]} ,
\\ B_4(x,y) & := \frac{y^2 \, \max \big\{ \hat y, \hat a_2^{1/2} \big\} \,  (f_3(x) + f_3(x+y) ) \, (2f_2(x) + f_2(x+y) + f_2(x-y)) \,  \max \big\{ \hat y, \hat a_2^{1/2} \big\}  }  {[y^2+(f_1(x)+ f_1(x+y))^2]\, [y^2+(f_2(x)+ f_2(x-y))^2] \,  [y^2+(f_2(x)+ f_2(x+y))^2]} .
\end{align*}
We have
\begin{align*}
\int_0^\infty B_1(x,y)  dy \le C\lambda \int_0^1 \frac {\max\{y^2, a_3\} } { \max\{ y,a_3\}^2}  \, dy, 
+ \int_1^\infty \frac { 2a_3 + C\lambda_\gamma y^{1-\gamma}} { \max\{ y,a_3\}^2}  \, dy & \le C\lambda_\gamma
\\ \int_0^\infty B_2(x,y)  dy \le \int_0^{\infty} \frac {\max\{\hat y^2,\hat a_3\} } { \max\{ y,a_3\}^2}  \, dy & \le C ,
\\  \int_0^\infty B_4(x,y)  dy \le \int_0^{\infty} \frac {\max\{\hat y^2,\hat a_2\}  } { \max\{ y,a_2\}^2}  \, dy &  \le C ,
\end{align*}
where in the last line we used  $\max \{ \hat yy, a_j\}\le \max \{ y, a_j\}$.
When $a_1\ge a_2$, we also obtain
\[
 \int_0^\infty B_3(x,y)  dy \le \int_0^{\infty} \frac {\max\{\hat y^2,\hat a_1\}  } { \max\{ y,a_1\}^2}  \, dy \le C ,
\]
while in the case $a_2\ge a_1$ we have
\begin{align*}
 \int_0^\infty B_3(x,y)  dy \le \int_0^{\infty} \frac {\max\{\hat y,\hat a_1^{1/2}\} \max\{\hat y,\hat a_2^{1/2}\}  } { \max\{ y,a_1\} \max\{ y,a_2\}}  \, dy \le C
\end{align*}
because in the last inequality we can assume without loss that $a_2\le 1$, and then use
\begin{align*}
\int_0^{\sqrt{a_1}} \frac {  a_1^{1/2} a_2^{1/2}  } {  \max\{ y,a_1\} \max\{ y,a_2\}}  \, dy
& \le C,
\\ \int_{\sqrt{a_1}}^{1} \frac { \max\{  y, a_2^{1/2}\}  } {  \max\{ y,a_2\}}  \, dy 
 \le \int_{0}^{1} \frac { \max\{ y,  a_2^{1/2}\}  } {  \max\{ y,a_2\}}  \, dy &\le C.
\end{align*}
These bounds and \eqref{8.4} yield \eqref{8.0a} when $\pm$ is $+$.  
To prove \eqref{8.0a} with $-$, write the numerator of $B_0^-(x,x-y)$ as
\begin{align*}
 \left[ f_2'(x) - f_2'(x-y) \right] \, \left[f_3(x) - f_3(x-y) -f_3'(x)y \right] 
+  y f_3'(x) \left[ f_2'(x) - f_2'(x-y) \right] .
\end{align*}
The first term is bounded by $C\lambda^2 \hat y^2y$, yielding the desired estimate for the fraction involving it directly.  The integral involving the second fraction is evaluated analogously to
\[
\int_S y |B_0^-(x,x+y)- B_0^-(x,x-y)| dy ,
\]
and then we use a version of \eqref{2.6a} with one term in the numerator and two in the  denominator, together with the bounds
\begin{align*}
 \left| 2f_2'(x) - f_2'(x+y)-f_2'(x-y) \right| & \le 2\|f_2'\|_{C^{1,\gamma}} |\hat y|^{1+\gamma} ,
 \\  |f_2'(x) - f_2'(x-y)| \, |2f_j(x)- f_j(x+y) - f_j(x-y)|  & \le C\lambda^2 \hat y^2y.
\end{align*}



It remains to prove the last claim in Theorem \ref{T.1.1}(ii), which we do at the end of Section~\ref{S10}.

\section{Estimates on the Difference of Approximating Solutions} \lb{S9}

We will construct solutions to \eqref{1.5} as $\eps\to 0$ limits of appropriate solutions to \eqref{7.1}, so we first need to extend the estimates from Section \ref{S8} to involve solutions $f_1$ and $f_2$ to \eqref{7.1} with $\eps\in(0,\frac 12)$ and $\eps'\in(0,\frac 12)$, respectively.  Let again $f:=f_1-f_2$ and drop $t$ in the notation.  Let
\beq\lb{9.0}
H^\pm_f (x,y):=\frac{y\, (f'(x) \pm f'(x-y))}{y^2+(f(x)\pm f(x-y))^2} ,
\eeq
so that
\[
{\partial_t f} (x) = \phi_{\eps} * PV \int_\bbR (H^-_{\phi_\eps * f_1}(x,y) + H^+_{\phi_\eps * f_1}(x,y) ) dy
\,- \, \phi_{\eps'} * PV \int_\bbR (H^-_{\phi_{\eps'} * f_2}(x,y) + H^+_{\phi_{\eps'} * f_2}(x,y) ) dy
\]
(we drop PV below).  Then
\[ 
\frac d{dt}  \left\| \sqrt{h}\, f \right\|_{L^2}^2 
 = 2( I_{5'}^+ + I_{5'}^- + I_{6'}^+ + I_{6'}^- + I_{7'}^+ + I_{7'}^-)
\]
where $F_j:=\phi_\eps*f_j$ ($j=1,2$),  $F:=\phi_\eps*f=F_1-F_2$,
\begin{align*}
 I_{5'}^\pm &:= \int_\bbRr h(x)  f(x) \,  \phi_\eps * \int_\bbR \frac {y\, (F'(x)\pm F'(x-y))}{y^2+(F_1(x)\pm F_1(x-y))^2} \, dydx ,
 \\ I_{6'}^\pm &:= \int_\bbRr h(x)  f(x) \, \phi_\eps* \int_\bbR  y B_5^\pm(x,x-y) \, dydx ,
  \\ I_{7'}^\pm &:= \int_\bbRr h(x)  f(x) 
  \left( \phi_\eps*  \int_\bbR  y B_7^\pm(x,y) \, dydx - \phi_{\eps'}*  \int_\bbR  y B_8^\pm(x,y) \, dydx \right),
\end{align*}
and with $F_3:=F_1+F_2$ and $F_{2'}:=\phi_{\eps'}*f_2$ we let
\begin{align*}
B_5^\pm & (x,y) := \frac {F_2'(x)\pm F_2'(y)} {(x-y)^2+(F_1(x)\pm F_1(y))^2} 
- \frac {F_2'(x)\pm F_2'(y)} {(x-y)^2+(F_2(x)\pm F_2(y))^2}
\\ & =  - \frac {  (F_2'(x)\pm F_2'(y)) \,(F_3(x) \pm F_3(y) ) \, (F(x)\pm F(y))} {[(x-y)^2+(F_1(x)\pm F_1(y))^2]\, [(x-y)^2+(F_2(x)\pm F_2(y))^2]} =: B_{6}^\pm(x,y) \, (F(x)\pm F(y)),
\\ B_7^\pm & (x,y) := 
 \frac {F_2'(x)\pm  F_2'(x-y)} {y^2+(F_2(x)\pm F_2(x-y))^2} ,
\\  B_8^\pm & (x,y) :=   \frac {F_{2'}'(x)\pm F_{2'}'(x-y)} {y^2+(F_{2'}(x)\pm F_{2'}(x-y))^2} .
\end{align*}
Similarly to \eqref{8.2}, we obtain
\beq \lb{9.1}
I_{5'}^+ + I_{5'}^- \le C(1+  \|f_1\|_{C^{2,\gamma}_\gamma}^2 ) \| f\|_{\tilde L^2}^2
\eeq
by applying the argument yielding \eqref{7.6} to $(f,F_1)$ in place of $(f_\eps''',\phi_\eps*f_\eps)$.

Integrals $I_{6'}^\pm$ can also be estimated in the same way as $I_6^\pm$ in Section \ref{S8} and we obtain
\beq\lb{9.2}
|I_{6'}^\pm| \le C(1+\|f_1\|_{C^{2,\gamma}_\gamma}^2+ \|f_2\|_{C^{2,\gamma}_\gamma}^2) \| f\|_{\tilde L^2}^2,
\eeq
provided we prove the same bound for the integrals
\begin{align*}
J_9^\pm & := \int_{\bbR^2}  (h(z)-h(x)) \phi_\eps(x-z) f(z)   \int_\bbR (x- y)\, B_{6}^\pm(x,y)F(y) \, dydx dz
\\ & = \int_{\bbR^2}  f(z)f(v)   \int_\bbR (h(z)-h(x)) \phi_\eps(x-z)   \int_\bbR y\, \phi_\eps(x-y-v) B_{6}^\pm(x,x-y)  \, dydx dzdv  
\end{align*}
that arise when symmetrizing the integrals at the start of the argument.  We obtain
\[
|J_9^\pm| \le C (1+  \|F_1'\|_{C^{1,\gamma}}^2 + \|F_2'\|_{C^{1,\gamma}}^2 )  \| f\|_{\tilde L^2}^2  
\le C (1+  \|f_1'\|_{C^{1,\gamma}}^2 + \|f_2'\|_{C^{1,\gamma}}^2 )  \| f\|_{\tilde L^2}^2  
\]
via the argument bounding $J_{2'}^\pm$ in Section \ref{S7}.  This involves first bounding the last $dydx$ integral above with $\phi_\eps(x-v)$ in place of $\phi_\eps(x-y-v)$  (see \eqref{7.8}, but now use \eqref{8.0a} instead of \eqref{7.8a}), then bounding the original integral when $|x-v|\ge 2\eps$  (see \eqref{7.9}), and finally bounding the integral with $\phi_\eps(x-y-v)-\phi_\eps(x-v)$ when $|x-v|\le 2\eps$ (see \eqref{7.9a}, and again use \eqref{8.0a} instead of \eqref{7.8a}).  

We finally claim that 
\beq\lb{9.3}
|I_{7'}^\pm|  \le C (1+  \|f_2\|_{C^{2,\gamma}_\gamma}^2 ) \sqrt{\eps+\eps'} \, \| f\|_{\tilde L^2}  
\eeq
(note that $\| f\|_{\tilde L^2}$ is not squared here).  Assume that $\eps\ge\eps'$, as both cases are identical.  Since
\[
\|\phi_\eps *F - \phi_{\eps'} *G\|_{L^\infty} 
\le  \|(\phi_\eps  - \phi_{\eps'}) *F\|_{L^\infty}  +  \|\phi_\eps *(F-G)\|_{L^\infty} 
\le \eps \|F'\|_{L^\infty} + \|F-G\|_{L^\infty},
\]
to prove \eqref{9.3}, it suffices to show
\begin{align}
\left| \int_\bbR  y \partial_x B_7^\pm(x,y) \, dy \right| & \le C (1+   \|f_2\|_{C^{2,\gamma}_\gamma}^2 ),  \lb{9.4}
\\ \left| \int_\bbR  y (B_7^\pm(x,y) - B_9^\pm(x,y)) \, dy \right|  &  \le C (1+   \|f_2\|_{C^2_\gamma}^2 )\,\sqrt\eps,  \lb{9.5}
\\ \left| \int_\bbR  y (B_8^\pm(x,y) - B_9^\pm(x,y)) \, dy \right|  &  \le C (1+   \|f_2'\|_{C^1}^2 )\,\sqrt\eps,  \lb{9.6}
\end{align}
where
\[
B_9^\pm (x,y) := 
 \frac {F_{2'}'(x)\pm F_{2'}'(x-y)} {y^2+(F_2(x)\pm F_2(x-y))^2}.
\]

We have
\[
y \partial_x B_7^\pm (x,y) = 
 \frac {y(F_2''(x)\pm  F_2''(x-y))} {y^2+(F_2(x)\pm F_2(x-y))^2} - 2  \frac {y(F_2'(x)\pm  F_2'(x-y))^2(F_2(x)\pm  F_2(x-y))} {[y^2+(F_2(x)\pm F_2(x-y))^2]^2},
\]
 and from \eqref{2.70} we see that
\[
 \left|   \int_{\bbR} \frac{y\, ( F_2''(x)\pm F_2''(x-y))}{y^2+(F_2(x)\pm F_2(x-y))^2} \, dy \right| \le C (1+ \|F_2\|_{C^{2,\gamma}_\gamma}^2).
\]
Hence \eqref{9.4} will follow once we show
\[
 \left|   \int_{\bbR} \frac {y(F_2'(x)\pm  F_2'(x-y))^2(F_2(x)\pm  F_2(x-y))} {[y^2+(F_2(x)\pm F_2(x-y))^2]^2} \, dy \right| \le C (1+ \|F_2'\|_{C^1}^2).
\]
This is immediate when $\pm$ is $-$, while \eqref{2.7} shows that in the other case the left-hand side is bounded by
\[
C (1+ \|F_2'\|_{C^1}^2) \int_0^\infty \frac {\max\{\hat y^2,F_2(x)\}}  {\max\{y,F_2(x)\}^2} \, dy \le C (1+ \|F_2'\|_{C^1}^2) .
\]


To prove \eqref{9.5}, we use
\[
| (F_{2}'(x)\pm F_{2}'(x-y)) - (F_{2'}'(x)\pm F_{2'}'(x-y)) | \le C \|f_2''\|_{L^\infty} \, \eps
\]
for $|y|\ge \eps$ to see that
\begin{align}
 \int_{-1}^1  |y|\, |B_7^-(x,y) - B_9^-(x,y)| \, dy & \le  C \|f_2''\|_{L^\infty} \int_0^1  \frac {y\min\{ y,\eps\}} {y^2} \,dy \le C \|f_2''\|_{L^\infty} \,\eps\,|\ln \eps|,  \notag
\\  \int_{\eps\le |y|\le 1} | y|\,  |B_7^+(x,y) - B_9^+(x,y)| \, dy & \le   C \|f_2''\|_{L^\infty} \,\eps\,|\ln \eps|,  \lb{9.6b}
\end{align}
while \eqref{2.7} shows 
that
\[
\int_{0}^\eps  y |B_j^+(x,y) - B_j^+(x,-y)| \, dy  \le   C (1+\|f_2'\|_{C^1}^2) \int_0^\eps  \frac {y \max \big\{ y, \sqrt{F_2(x)} \big\} } {\max\{y,F_2(x)\}^2} \,dy
\le C (1+\|f_2'\|_{C^1}^2) \,\sqrt\eps
\]
holds for $j=7,9$.
We note that  we gave up $\sqrt\eps$ here by only estimating $|F_{2}'(x)+ F_{2}'(x-y)|\le 2\|f_2'\|_{L^\infty}$ and $|F_{2'}'(x)+ F_{2'}'(x-y)|\le 2\|f_2'\|_{L^\infty}$ in one of the terms in the integrand, but this will not cause a problem.
Finally, we  have
\[
\left| \int_{ |y|\ge 1}    \frac {y(F_{2}'(x)- F_{2'}'(x))} {y^2+(F_2(x)\pm F_2(x-y))^2} \, dy \right|
\le C \|f_2\|_{C^2_\gamma} \|F_2'-F_{2'}'\|_{L^\infty} \le C \|f_2\|_{C^2_\gamma}^2\, \eps
\]
by \eqref{2.1}, while integration by parts as in \eqref{2.1a} yields
\[
\left| \int_{ |y|\ge 1}    \frac {y(F_{2}'(x-y)- F_{2'}'(x-y))} {y^2+(F_2(x)\pm F_2(x-y))^2} \, dy \right|
\le C \|F_2-F_{2'}\|_{L^\infty} (1+\|F_2'\|_{L^\infty}) 
\le  C (1+\|f_2'\|_{L^\infty}^2)\, \eps.
\]
These estimates collectively yield \eqref{9.5}.  

Inequality \eqref{9.6} is proved similarly, using
\[
| (F_{2}(x)\pm F_{2}(x-y)) - (F_{2'}(x)\pm F_{2'}(x-y)) | \le C \|f_2'\|_{L^\infty} \, \eps
\]
and with no need to  treat integrals over $\{|y|\ge 1\}$ separately.  But there is an extra complication in the bound corresponding to \eqref{9.6b}.
Namely, with $G_2(x,y):= F_{2}(x)+ F_{2}(x-y)$ and $G_{2'}(x,y):= F_{2'}(x)+ F_{2'}(x-y)$ we have
\[
B_8^+(x,y) - B_9^+(x,y) =  
\frac {\partial_x G_{2'}(x,y) (G_{2}(x,y)+G_{2'}(x,y)) \, (G_{2}(x,y)-G_{2'}(x,y)) }  {[y^2+G_2(x,y)^2] \, [y^2+G_{2'}(x,y)^2]}
\]
and $|G_{2}(x,y)-G_{2'}(x,y)|\le C \|f_2'\|_{L^\infty}\eps$.  Using \eqref{2.7}, it follows that
\[
\int_{ |y|\ge \eps} |y|\,  |B_8^+(x,y) - B_9^+(x,y)| \, dy  \le  C (1+ \|f_2'\|_{C^1}^2) \, \eps
\int_\eps^\infty  \frac {\max \{ \hat y, \sqrt{F_{2'}(x)} \}  } {\max\{y,F_{2}(x)\} \max\{y,F_{2'}(x)\}} \,dy.
\]
The last integral is bounded by
\[
\int_\eps^\infty \frac {\max \{ \hat y, \sqrt{F_{2'}(x)} \}  } {y \max\{y,F_{2'}(x)\}} 
\,dy \le \frac C{\sqrt\eps},
\]
which yields the desired bound.

We can now conclude from \eqref{9.1}, \eqref{9.2}, and \eqref{9.3} that
\[
\frac d{dt}  \left\| \sqrt{h}\, f \right\|_{L^2} \le  C \left(1+ \|f_1\|_{C^{2,\gamma}_\gamma}^2 + \|f_2\|_{C^{2,\gamma}_\gamma}^2 \right) \left(\sqrt{\eps+\eps'} + \| f\|_{\tilde L^2} \right)
\]
when  $f_1$ and $f_2$ solve \eqref{7.1} with $\eps\in(0,\frac 12)$ and $\eps'\in(0,\frac 12)$, respectively, and $f:=f_1-f_2$.  Similarly to \eqref{8.0b}, it follows that
\beq\lb{9.7}
\frac d{dt}   \sup_{x_0\in\bbR}  \left\| \sqrt{h_{x_0}}\, f(\cdot,t) \right\|_{L^2} \le  C \left( 1+ \sum_{j=1}^2 \|f_j(\cdot,t)\|_{C^{2,\gamma}_\gamma}^2  \right) \left(\sqrt{\eps+\eps'} +  \sup_{x_0\in\bbR}  \left\| \sqrt{h_{x_0}}\, f(\cdot,t) \right\|_{L^2} \right).
\eeq


\section{Existence of Solutions} \lb{S10}

First we claim that the operator in \eqref{9.0} satisfies
\beq\lb{10.1}
\begin{split}
\sup_{x\in\bbR} & \left| PV \int_\bbR (H_{F_1}^-(x,y) + H_{F_1}^+(x,y) )\,dy - PV \int_\bbR (H_{F_2}^-(x,y) + H_{F_2}^+(x,y) )\,dy \right| 
\\ & \qquad\qquad\qquad \le C (1+\|F_1\|_{C^2_\gamma}^2+\|F_2\|_{C^2_\gamma}^2) \|F_1-F_2\|_{C^{1,\gamma}}.
\end{split}
\eeq
The inside of the absolute value is
\[
\sum_\pm PV \int_\bbR \frac {y\, (F'(x)\pm F'(x-y))}{y^2+(F_1(x)\pm F_1(x-y))^2} \, dy 
+ \sum_\pm PV \int_\bbR  y B_5^\pm(x,x-y) \, dy,
\]
where $F:=F_1-F_2$, $F_3:=F_1+F_2$, and $B_5^\pm$ is from Section \ref{S9}.  The two integrals in the first sum are bounded by $C(1+\|F_1\|_{C^2_\gamma}+\|F_2\|_{C^2_\gamma})\|F\|_{C^{1,\gamma}}$ due to \eqref{2.70}. The same bound easily holds for the integral with $B_5^-$, as well as for the one with $B_5^+$ over $|y|\ge 1$.  We write the last integral as $\int_0^1 y|B_5^+(x,x-y)-B_5^+(x,x+y)| \, dy$,
and again use \eqref{2.7} and a version of \eqref{2.6a} with five terms.  There we always obtain $y^2$ in the numerator, and this cancels the smallest of $y^2+(F_1(x)+ F_1(x\pm y))^2$ and $y^2+(F_2(x)+ F_2(x \pm y))^2$ that appear in the denominator, so the integral is bounded by $C(1+\|F_1'\|_{C^1}^2+\|F_2'\|_{C^1}^2)\|F\|_{C^1}$ times
\begin{align*}
 \int_{0}^1 \frac {  \max\{y^2,F_1(x)\}  } {\max\{y,F_1(x)\}^2} \, dy +  \int_{0}^1 \frac {  \max\{y^2,F_2(x)\}  } {\max\{y,F_2(x)\}^2} \, dy + \int_0^{1} \frac {\max\{y,\sqrt{F_1(x)}\} \max\{y, \sqrt{F_2(x)}\}  } { \max\{ y,F_1(x)\} \max\{ y,F_2(x)\}}  \, dy \le C
\end{align*}
(see an estimate at the end of Section \ref{S8} for the last integral).
%
%
 This proves \eqref{10.1}.

Then for each $\eps>0$ and $n\in\bbN$ we have
\begin{align*}
 & \left\| \phi_\eps*PV \int_\bbR (H_{\phi_\eps*f_1}^-(x,y) + H_{\phi_\eps*f_1}^+(x,y) )\,dy - \phi_\eps* PV \int_\bbR (H_{\phi_\eps*f_2}^-(x,y) + H_{\phi_\eps*f_2}^+(x,y) )\,dy \right\|_{C^n} 
\\ & \qquad\qquad\qquad \le C_n (1+\|f_1\|_{C^2_\gamma}^2+\|f_2\|_{C^2_\gamma}^2) \, \eps^{-n-2} \|f_1-f_2\|_{L^\infty}.
\end{align*}
for some $(n,\gamma)$-dependent constant $C_n$.  This means that we can apply Picard's existence theorem to \eqref{7.1} to obtain its local-in-time  solutions $f$ for initial data from the metric space
$X_\psi:=\{g\in \phi*\psi+C^2(\bbR) \,\big|\,   \inf g>0 \}$, where $\psi\ge 0$ with $\|\psi\|_{\tilde H^3_\gamma}<\infty$ is the initial datum from Theorem \ref{T.1.1}.  From \eqref{2.70a} and \eqref{7.1} we see that these are $C^\infty$ if so are the initial data,
and they can be extended in time as long as they remain uniformly positive and $f(\cdot,t)-\phi*\psi$ remains bounded in $C^2(\bbR)$.  This is because
\[
\|g\|_{C^2_\gamma} = \|g'\|_{C^1} + \|g\|_{\ddot C^{1-\gamma}}
\le \|(g-\phi*\psi)'\|_{C^1} + \|(\phi*\psi)'\|_{C^1} +   \|\phi*\psi\|_{\ddot C^{1-\gamma}} + 2    \|g-\phi*\psi\|_{L^\infty}  
\]
and $\|\psi\|_{C^2_\gamma}<\infty$.
For any $\eps>0$, let $f_\eps$ be such $C^\infty$ solution with  
\beq\lb{10.2a}
f_\eps(\cdot,0)=\phi_\eps*\psi+2\eps\in X_\psi.
\eeq
  Let  $T_\eps\in(0,\infty]$ be its time of existence and let $F_\eps:=\phi_\eps*f_\eps$.

Recall that $h_{x_0}:=h_0(\cdot-x_0)$  for $x_0\in\bbR$ and let
\[
\|g\|_{\tilde H_\gamma}:=  \|g\|_{\ddot C^\gamma} + \sup_{x_0\in\bbR}   \left\| \sqrt{h_{x_0}}\, g''' \right\|_{L^2} .
\]
Then
\beq\lb{10.13}
\max\left\{ \|g\|_{\tilde H^3_\gamma}, \|g\|_{C^{2,\gamma}_\gamma} \right\} \le   C\|g\|_{\tilde H_\gamma} \le C \|g\|_{\tilde H^3_\gamma}
\eeq
because we assume that $\gamma\in(0,\frac 12]$ (recall \eqref{2.0a}), so
from \eqref{2.70a} and \eqref{7.10} we see that 
\beq\lb{10.3}
\frac d{dt}  \|f_\eps(\cdot,t)\|_{\tilde H_\gamma}  \le C  \left(1+\|f_\eps(\cdot,t)\|_{C^{2,\gamma}_\gamma}^4\right) \|f_\eps(\cdot,t)\|_{\tilde H_\gamma} \le C  \left(1+\|f_\eps(\cdot,t)\|_{\tilde H_\gamma}^4 \right) \|f_\eps(\cdot,t)\|_{\tilde H_\gamma}
\eeq
as long as $t<T_\eps$ and  $\eps\le \frac 12\|f_\eps'(\cdot,t)\|_{L^\infty}^{-1}$.
 Let $T>0$ be such that the solution to $z'=C(1+z^4)z$ with $z(0)=\|\psi\|_{\tilde H_\gamma}+1$ satisfies $z(T)=M:=\|\psi\|_{\tilde H_\gamma}+2$, and let $T_\eps':=\min\{T_\eps,T\}>0$.  Then \eqref{10.3} implies that for all $\eps\in(0,(CM)^{-1})$ 
 we have
\beq\lb{10.7}
\sup_{t\in[0,T_\eps')} \|f_\eps(\cdot,t)\|_{\tilde H_\gamma} \le M.
\eeq

It remains to check that $f_\eps$ stays uniformly positive for some uniform time.  Fix any small enough $\eps>0$ as above and  $t\in[0,T_\eps')$.  Then for any $x\in\bbR$ we have (again dropping $t$)
\beq\lb{10.6}
PV \int_\bbR  (H_{F_\eps}^-(x,y) + H_{F_\eps}^+(x,y) )\,dy = I_8^+(x) + I_8^-(x)  - 4 I_9(x),
\eeq
where
\begin{align*}
I_8^\pm(x) & := PV \int_\bbR \frac{y\, F_\eps'(x)}{y^2+(F_\eps(x) \pm F_\eps(x-y))^2}   dy,
\\ I_9 (x)& :=  \int_\bbR \frac{y\, F_\eps'(x-y) F_\eps(x)F_\eps(x-y)}{[y^2+(F_\eps(x)+ F_\eps(x-y))^2]\, [y^2+(F_\eps(x) - F_\eps(x-y))^2]}   dy.
\end{align*}
From \eqref{2.1} we see that
\beq\lb{10.4}
|I_8^\pm(x)| \le C \|F_\eps\|_{C^2_\gamma} |F_\eps'(x)| .
\eeq
Writing  $I_9(x)$  as $\int_0^\infty y(B(x,x-y)-B(x,x+y)) \, dy$ and again using \eqref{2.7} and a version of \eqref{2.6a} with two terms in the denominator, 
we see that
\beq\lb{10.5}
|I_9(x)| \le C(1+\|F_\eps'\|_{C^1}^5) |F_\eps(x)| .
\eeq
The two ``worst'' terms in the latter estimate are (with the factor $F_\eps(x)$ removed)
\begin{align*}
& \left| \int_0^\infty \frac {y\,  F_\eps'(x+y)F_\eps(x+y)\,(2F_\eps(x)+F_\eps(x+y)+F_\eps(x-y))\,(F_\eps(x+y)-F_\eps(x-y))} {[y^2+(F_\eps(x) + F_\eps(x-y))^2]\,[y^2+(F_\eps(x) + F_\eps(x+y))^2] \,[y^2+(F_\eps(x) - F_\eps(x-y))^2]} \,dy \right|
\\ & \qquad \qquad \qquad  \le C(1+\|F_\eps'\|_{C^1}^2) \int_0^\infty \frac { \max\big\{\hat y^2,F_\eps(x) \big\}} {\max\{y,F_\eps(x)\}^2}\, dy
\le C(1+\|F_\eps'\|_{C^1}^2) ,
\end{align*}
which holds because
\[
F_\eps(x+y)\,(2F_\eps(x)+F_\eps(x+y)+F_\eps(x-y)) \le 2 (F_\eps(x)+F_\eps(x+y))^2 + (F_\eps(x)+F_\eps(x-y))^2 ,
\]
and
\begin{align*}
& \left| \int_0^\infty \frac {y\,  F_\eps'(x+y)F_\eps(x+y)\,(2F_\eps(x)-F_\eps(x+y)-F_\eps(x-y))\,(F_\eps(x+y)-F_\eps(x-y))} {[y^2+(F_\eps(x) + F_\eps(x+y))^2]\,[y^2+(F_\eps(x) - F_\eps(x-y))^2] \,[y^2+(F_\eps(x) - F_\eps(x+y))^2]} \,dy \right|
\\ & \qquad \qquad \qquad  \le C(1+\|F_\eps'\|_{C^1}^5) \left[ \int_0^1 \frac { \max\big\{ y^2,F_\eps(x) \big\}^2 } {\max\{y,F_\eps(x)\}^2}\, dy + \int_1^\infty \frac 1{y^2}\, dy \right]
\le C(1+\|F_\eps'\|_{C^1}^5) .
\end{align*}
From \eqref{10.6}, \eqref{10.4}, and \eqref{10.5} we thus obtained
\beq\lb{10.11}
 \left| PV \int_\bbR  (H_{F_\eps}^-(x,y) + H_{F_\eps}^+(x,y) )\,dy \right| \le C(1+\|F_\eps\|_{C^2_\gamma}) \, ( \|F_\eps\|_{C^2_\gamma}^4 F_\eps(x)+|F_\eps'(x)|).
 \eeq

Let  now  $m(t):=\inf f_\eps(\cdot,t)>0$ and consider any $x'\in\bbR$ such that $f_\eps(x',t)\le m(t)+\eps^2$.  Then \eqref{2.7} applied to $f_\eps(\cdot,t)-m(t)\ge 0$ shows that
\[
\sup_{|x-x'|\le2\eps}|\partial_x f_\eps(x,t)| \le C(1+\|\partial_x f_\eps(\cdot,t)\|_{C^1})\, \eps,
\]
so \eqref{10.11}  and \eqref{7.1} yield
\beq\lb{10.9}
|\partial_t f_\eps(x',t)| \le C(1+\|f_\eps(\cdot,t)\|_{C^2_\gamma}^5) \,(m(t)+\eps).
\eeq
It follows that 
\[
m'(t)\ge - C(1+\|f_\eps(\cdot,t)\|_{C^2_\gamma}^5)\, (m(t)+\eps),
\]
which together with $m(0)\ge 2\eps$  (recall \eqref{10.2a}) yields $m(t)\ge \eps $ for all $t\le \min\{T_\eps,T'\}$, where $T':=\min\{T,\frac 1{3C(1+M^5)}\}>0$.  This, \eqref{10.7}, and \eqref{10.13} show that $f_\eps$ can be continued within $X_\psi$ past time $T_\eps$ if $T_\eps\le T'$, hence $T_\eps> T'$ for all small $\eps>0$.  

From \eqref{10.7} and \eqref{10.11} we see that $\tilde f_\eps:= f_\eps-\phi*\psi\in L^\infty([0,T']; \tilde H^3(\bbR))$.
Then \eqref{9.7} and $f_\eps\ge 0$ show that $\tilde f_\eps$ converges to some $\tilde f\ge -\phi*\psi$ in $L^\infty([0,T']; \tilde L^2(\bbR))$ as $\eps\to 0$ because we  also have $\tilde f_\eps(\cdot,0)\to \psi-\phi*\psi$ in $L^\infty(\bbR)\subseteq \tilde L^2(\bbR)$ (this yields $\tilde f(\cdot,0)= \psi-\phi*\psi$ as well).  Then \eqref{10.7} shows that $f:=\tilde f + \phi*\psi\ge 0$ satisfies
\beq\lb{10.8}
\sup_{t\in[0,T']} \|f(\cdot,t)\|_{\tilde H_\gamma} \le M,
\eeq
and $\tilde f_\eps\to \tilde f$ holds also in $L^\infty([0,T']; C^{2}(\bbR))$.  Then $F_\eps-\phi*\psi\to \tilde f$ in this space, so \eqref{10.1} shows that
\beq \lb{10.12}
\lim_{\eps\to 0} \|\partial_t f_\eps - G\|_{L^\infty(\bbR\times [0,T'])} =0,
\eeq
where $G$ is the right-hand side of \eqref{1.5}.  Thus $G\in C(\bbR\times [0,T'])$ because all $\partial_t f_\eps$ are smooth.  But then \eqref{10.12} and pointwise convergence $f_\eps\to f$ shows that $\partial_t f$ exists and equals $G$, which means that $f\ge 0$ is a classical solution to \eqref{1.5} that satsfies \eqref{10.8}.
We can now continue it on some time interval $[0,T_\psi)$ with $T_\psi\in(0,\infty]$ maximal such that
\beq\lb{10.12a}
\sup_{t\in[0,T]} \|f(\cdot,t)\|_{\tilde H_\gamma} <\infty
\eeq
for all $T\in(0,T_\psi)$.  This and \eqref{10.13} yield  \eqref{1.7}.  And since \eqref{10.3} implies
\beq\lb{10.14}
\frac d{dt}  \|f(\cdot,t)\|_{\tilde H_{\gamma}}  \le C \left(1+\|f(\cdot,t)\|_{C^{2,\gamma}_{\gamma}}^4\right) \|f(\cdot,t)\|_{\tilde H_{\gamma}}
\eeq
at $t=0$, this then holds for any $t\in[0,T_\psi)$ because  we can apply the above arguments with initial time $t$ and initial condition $f(\cdot,t)$ (the newly obtained $\tilde f_\eps$ converge to the same $\tilde f$ in $L^\infty([t,T']; C^{2}(\bbR))$  by the uniqueness claim in Theorem \ref{T.1.1}(ii), proved in Section \ref{S8}).

Next, for any $\gamma'\in(0,\gamma]$ we have $\|\cdot\|_{\tilde H_{\gamma'}}\le \|\cdot \|_{\tilde H_{\gamma}}$, so $f$ is also the unique solution to \eqref{1.5} on the time interval $[0,T_\psi)$ when $\gamma$ is replaced by $\gamma'$ (this is still the $T_\psi$ corresponding to $\gamma$).  
Then \eqref{10.14}  holds also for $\gamma'$ and any $t\in[0,T_\psi)$.
  Assume now that $T_\psi<\infty$ but \eqref{1.8} fails.  Since \eqref{2.70a} shows that for $\gamma''\in\{\gamma,\gamma'\}$ we have
\[
\frac d{dt}  \|f(\cdot,t)\|_{\ddot C^{\gamma''}} \le C  \left(1 +  \|f_x(\cdot,t)\|_{C^{1}}^2 \right) \left(1+ \|f(\cdot,t)\|_{\ddot C^{\gamma''}} \right),
\]
it follows that $\sup_{t\in[0,T_\psi)} \|f(\cdot,t)\|_{\ddot C^{\gamma''}}<\infty$.  This with $\gamma''=\gamma$, together with the definition of $T_\psi$ and \eqref{10.14} yield
\[
\lim_{t\to T_\psi} \|f_{xxx}(\cdot,t)\|_{\tilde L^2}=\infty.
\]
But then also
\[
\lim_{t\to T_\psi} \|f(\cdot,t)\|_{\tilde H_{\gamma'}} =\infty,
\]
so \eqref{10.14} with $\gamma'$ in place of $\gamma$ forces \eqref{1.8} to hold, a contradiction.  
This now also proves \eqref{1.8} for all $\gamma'\in(0,1)$, and shows that $T_\psi$ is independent of $\gamma$.

Since $f$ can clearly be continued at least as long as the solution $z$ above that defined $T$ exists,  \eqref{10.13} also yields \eqref{1.10}.  Finally, \eqref{10.1} with $F_1:=f(\cdot+z,t)$ ($z\neq 0$) and $F_2:=f(\cdot,t)$, shows that 
for any $t\in[0,T_\psi)$ we have
\[
\|f_t(\cdot,t)\|_{W^{1,\infty}} \le  \|f_t(\cdot,t)\|_{L^\infty} + C \left(1+\|f(\cdot,t)\|_{C^2_\gamma}^2\right) \|f_x(\cdot,t)\|_{C^{1,\gamma}}.
\]
Then \eqref{2.70a}, \eqref{1.7}, and \eqref{10.13} yield \eqref{1.7a},
which concludes the proof of Theorem \ref{T.1.1}(i).


\vskip 3mm
\noindent
{\bf Proof of the last claim in Theorem \ref{T.1.1}(ii).} 
All the arguments in Sections \ref{S3}--\ref{S7} trivially  extend to the case $h\equiv 1$,  with all $\|\cdot\|_{\tilde L^2}$ and $\|\cdot\|_{\tilde L^2_{x_0}}$ replaced by $\|\cdot\|_{L^2}$, because then we always have $h(x)-h(y)\equiv 0\equiv h'(x)$ and so terms with these factors vanish. Taking $\eps\to 0$ in the corresponding version of \eqref{7.10} yields (dropping $t$ again)
\beq\lb{10.15}
\frac d{dt}  \left\|  f''' \right\|_{L^2}^2 
\le C (1+  \|f\|_{C^{2,\gamma}_\gamma}^4 ) \, (\|f'\|_{ L^2 }^2 + \|f''\|_{ L^2 }^2 + \|f'''\|_{ L^2 }^2) 
\eeq
on the time interval $[0,T_\psi)$, similarly to \eqref{10.14}.

Let  next $\psi_0:=\phi*(\psi_{-\infty}\chi_{(-\infty,0)} + \psi_{\infty}\chi_{[0,\infty)})$ and $f_0(x,t):=f(x,t)-\psi_0(x)$.  Then
\begin{align*}
\frac d{dt}  \left\| f_{0}  \right\|_{L^2}^2
 & =  2 \sum_\pm \int_\bbRr  f_0(x)  \, PV \int_\bbR \frac {y\, (f_{0}'(x)\pm f_{0}'(x-y))}{y^2+(f(x)\pm f(x-y))^2} \, dydx
\\ & + 2 \sum_\pm \int_\bbRr  f_0(x)  \, PV \int_\bbR \frac {y\, (\psi_{0}'(x)\pm \psi_{0}'(x-y))}{y^2+(f(x)\pm f(x-y))^2} \, dydx .
\end{align*}
The arguments in Section \ref{S3} estimating $I_1^\pm$, with $h\equiv 1$, show that the first sum on the right-hand side  is bounded above by $C(1+  \|f\|_{C^{2,\gamma}_\gamma}^2 ) \| f_0\|_{L^2}^2$.
From $\|\psi_0\|_{C^2}\le C|\psi_\infty-\psi_{-\infty}|$ and \eqref{2.70} we see that the inside integrals in the second sum are bounded by $C(1+\|f\|_{C^2_\gamma})|\psi_\infty-\psi_{-\infty}|$ for all $x\in\bbR$.
But they are also  bounded by $C|\psi_\infty-\psi_{-\infty}||x|^{-1}$ when $|x|\ge 2$ because $\psi_0'\equiv 0$ on $\bbR\setminus(-1,1)$.  Therefore
\[
\frac d{dt}  \left\| f_{0}  \right\|_{L^2}^2 \le C(1+  \|f\|_{C^{2,\gamma}_\gamma}^2 ) \| f_0\|_{L^2} (\| f_0\|_{L^2} + |\psi_\infty-\psi_{-\infty}|),
\]
which together with \eqref{10.15} yields
\beq\lb{10.16}
\frac d{dt} ( \left\|  f''' \right\|_{L^2} +\left\| f_{0}  \right\|_{L^2} )
\le C (1+  \|f\|_{C^{2,\gamma}_\gamma}^4 ) \, ( \left\|  f''' \right\|_{L^2} +\left\| f_{0}  \right\|_{L^2}  + |\psi_\infty-\psi_{-\infty}|) .
\eeq
From this, \eqref{1.7}, and $\psi_0\in H^3(\bbR)$ we get \eqref{1.11} with $\psi_0$ in place of $\psi$, and then \eqref{1.11} holds as well.

\section{Proof of Theorem \ref{T.1.2}} \lb{S11}

\noindent
{\bf Proof of (ii).} 
Note that 
 \eqref{10.9}, $\tilde f_\eps(\cdot,t)\to \tilde f(\cdot,t)$ in $ C^{2}(\bbR)$, $\partial_t \tilde f_\eps(\cdot,t)\to \partial_t \tilde f(\cdot,t)$ in $ L^\infty(\bbR)$, and \eqref{2.70a}  show that for all $t\in[0,T_\psi)$ we  have
\beq\lb{10.10}
\left| \frac d{dt} \inf f(\cdot,t) \right| \le C \min \left\{ (1+\|f(\cdot,t)\|_{C^2_\gamma}^5) \inf f(\cdot,t), \, (1+\|f_x(\cdot,t)\|_{L^\infty}) \|f(\cdot,t)\|_{C^2_\gamma} \right\}.
\eeq
This, \eqref{10.13}, and \eqref{10.12a} imply that $\inf f(\cdot,t)=0$ for all $t\in[0,T_\psi)$ if $\inf \psi=0$.

\vskip 3mm
\noindent
{\bf Proof of (i).} Since
\[
\frac{ f(x)\pm f(x-y) \pm y\, f'(x-y)}{y^2+(f(x)\pm f(x-y))^2}  
= - \frac d{dy} \arctan  \frac{ f(x)\pm f(x-y)} y ,
\]
we see that
\[
 \int_{\bbR}  \frac{ f(x)\pm f(x-y) \pm y\, f'(x-y)}{y^2+(f(x)\pm f(x-y))^2}  \, dy=
 \begin{cases}
 \pi & \text{$\pm$ is $+$ and $f(x)>0$}
 \\ 0 &  \text{$\pm$ is $-$ or $f(x)=0$}
 \end{cases}
\]
for $ f\ge 0$ with $\| f\|_{C^2_\gamma(\bbR)}<\infty$.
Hence \eqref{1.5} can be equivalently written as \eqref{1.6}.

Assume that $M(t):=\sup f(\cdot,t)>0$ for some $t\in[0,T_\psi)$ (if $M(t)=0$, then $f(\cdot,t')\equiv 0$ for all $t'\in [t,T_\psi)$).  Let $\delta\in(0,\sqrt{M(t)})$ and consider any $x\in\bbR$  such that $f(x,t)\ge M(t)-\delta^2$.  Then \eqref{2.1} and \eqref{2.7} for the function $M(t)-f(\cdot,t)$ show that (we again drop $t$)
\beq\lb{11.1}
\left| PV \int_\bbR  \frac{y f'(x)}{y^2+(f(x)\pm f(x-y))^2}  dy \right| \le C(1+\|f\|_{C^2_\gamma}^2)\delta,
\eeq
and we clearly have
\beq\lb{11.2}
 - \int_{|y|\ge\delta}   \frac{f(x)- f(x-y)}{y^2+(f(x)- f(x-y))^2}  dy +  \int_{|y|\ge\delta}  \frac{f(x)- f(x-y)}{y^2+(f(x)+ f(x-y))^2}  dy \le \int_{|y|\ge\delta} \frac{\delta^2}{y^2} = 2 \delta .
\eeq
Writing 
\[
f(x)- f(x-y)=[f(x)- f(x-y)-f'(x)y]+f'(x)y
\]
and then using \eqref{2.1} and \eqref{2.7} yields
\beq\lb{11.3}
 \left| \int_{-\delta}^{\delta}   \frac{f(x)- f(x-y)}{y^2+(f(x) \pm f(x-y))^2}  dy  \right| \le C (1+ \|f\|_{C^2_\gamma}^2 )\delta .
\eeq
Finally,
\beq\lb{11.4}
- \int_\bbR  \frac{2f(x)}{y^2+(f(x)+ f(x-y))^2}  dy  \le - \int_\bbR  \frac{2f(x)}{y^2+(2f(x)+\delta^2)^2}  dy  
= -\pi \,\frac{2f(x)}{2f(x)+\delta^2},
\eeq
so from \eqref{11.1}--\eqref{11.4} and \eqref{1.6} we obtain
\[
 f_t(x) \le C (1+ \|f\|_{C^2_\gamma}^2 + \delta M(t)^{-1})\delta.
\]
This holds uniformly in $t\in[0,T]$ for any $T<T_\psi$, so taking $\delta\to 0$ shows that 
\[
\limsup_{t'\to t^+} \frac{M(t')-M(t)}{t'-t}\le 0
\]
for each $t\in[0,T_\psi)$.  Hence $M$ is non-increasing on $[0,T_\psi)$.

Assume now that $m(t):=\inf f(\cdot,t)>0$ for some $t\in[0,T_\psi)$ (we already proved that if $m(t)=0$, then $m(t')=0$ for all $t'\in [t,T_\psi)$).  Let $\delta\in(0,\sqrt{m(t)})$ and consider any $x\in\bbR$  such that $f(x,t)\le m(t)+\delta^2$.  Then  (dropping $t$) we again have \eqref{11.1} and \eqref{11.3}, while  \eqref{11.2} becomes
\[
 - \int_{|y|\ge\delta}   \frac{f(x)- f(x-y)}{y^2+(f(x)- f(x-y))^2}  dy +  \int_{|y|\ge\delta}  \frac{f(x)- f(x-y)}{y^2+(f(x)+ f(x-y))^2}  dy \ge -\int_{|y|\ge\delta} \frac{\delta^2}{y^2} = -2 \delta
\]
and \eqref{11.4} becomes
\[
- \int_\bbR  \frac{2f(x)}{y^2+(f(x)+ f(x-y))^2}  dy  \ge - \int_\bbR  \frac{2f(x)}{y^2+(2f(x)-\delta^2)^2}  dy  
= -\pi \,\frac{2f(x)}{2f(x)-\delta^2}.
\]
So we now obtain
\[
 f_t(x) \ge - C (1+ \|f\|_{C^2_\gamma}^2 + \delta m(t)^{-1})\delta ,
\]
and conclude as above that $m$ is non-decreasing on $[0,T_\psi)$.

\vskip 3mm
\noindent
{\bf Proof of (iii).} 
Let $f_0:=f-\tilde f$
and
\[
A(t):= \sup_{x_0\in\bbR} \left( 1+(\mu-|x_0|)_+\right) \left\| \sqrt{h_{x_0}}\, f_0 (\cdot,t)\right\|_{L^2}^2 
\]
for $t\in [0,\min\{T_\psi, T_{\tilde \psi}\})$.
 Then
\[
\| f_0(\cdot,t)\|_{\tilde L^2_{x_0}}^2 
\le  \sum_{n=1}^{\lfloor \frac 12(\mu-|x_0|)_+ \rfloor} \frac {C A(t)n^{-2}} {1+(\mu-|x_0|)_+} +  \sum_{n\ge \lfloor \frac 12(\mu-|x_0|)_+ \rfloor} C A(t) n^{-2} \le  \frac {C A(t)} {1+(\mu-|x_0|)_+}
\]
for all $x_0\in\bbR$.
From this and \eqref{8.1} we see that
\[
A'(t) \le  C \left(1+  \|f(\cdot,t)\|_{C^{2,\gamma}_\gamma}^2 + \|\tilde f(\cdot,t)\|_{C^{2,\gamma}_\gamma}^2 \right) A(t) ,
\]
and the claim follows.


\section{Proof of Theorem \ref{T.1.3}} \lb{S12}

\noindent
{\bf Proof of (i).} 
Derivation of \eqref{1.13} and the proof of Theorem \ref{T.1.3} are already contained in Sections \hbox{\ref{S2}--\ref{S11},} if we simply ignore all the arguments involving integrals resulting from the second term in \eqref{1.5} (which simplifies the proof considerably).  This includes estimates on all integrals with superscript $+$ and the argument in Section \ref{S10} showing that $f_\eps$ remains positive for a uniformly positive time.  Note that these were the only places where we used \eqref{2.7}, other than the proof of Theorem \ref{T.1.2}(i) (where it applies even for \eqref{1.13}, and where we also ignore all the integrals involving $f(x)+ f(x-y)$).


\vskip 3mm
\noindent
{\bf Proof of (ii).} 
For the Muskat Problem on the strip $\bbR\times(0,l)$, we consider \eqref{1.3} with the Laplacian on $\bbR\times(0,l)$, which means that with ${\bf y}_n:=(y_1,y_2-2ln)$ we now have
\begin{equation} \lb{12.2}
u({\bf x},t) = \frac 1{2\pi} \sum_{n\in\bbZ} \int_{\bbR\times(0,l)} \left( \frac{({\bf x}-{\bf y}_n)^\perp}{|{\bf x}-{\bf y}_n|^2} - \frac{({\bf x}-\bar {\bf y}_n)^\perp}{|{\bf x}-\bar {\bf y}_n|^2} \right) \rho_{x_1}({\bf y},t) \, d{\bf y}.
\end{equation}
The derivation of \eqref{1.5} in Section \ref{S2}, together with assuming that $\rho_1-\rho_0=2\pi$, now yields 
\eqref{12.1} with  $\Theta_l$ from \eqref{12.1a} when we also use that
\begin{equation} \lb{12.3}
 \sum_{n\in\bbZ}   \frac{y}{y^2+(r-2ln)^2}  
 = \frac \pi l \sum_{n\in\bbZ}   \frac{\frac {\pi y}l}{(\frac {\pi y}l)^2+(\frac {\pi r}l-2\pi n)^2}
 =\frac{\pi}{l} \Theta_\pi (\tfrac {\pi y}l, \tfrac {\pi r}l)  =  \Theta_l(y,r)
\eeq
(this derivation, including the proof of the middle equality in \eqref{12.3}, appears in \cite{CorGraOri}).  Let us assume for simplicity that $l=\pi$, as the general case is identical, and define
\beq\lb{12.4}
\begin{split}
\Theta^-_0(y,r) &:= \frac{y}{y^2+r^2}  
\\ \Theta^+_0(y,r) &:= \frac{y}{y^2+r^2} + \frac{y}{y^2+(2\pi-r)^2}  ,
\\ \Theta^-(y,r) &:= \Theta_\pi(y,r) - \Theta^-_0(y,r)  =  \sum_{n\in\bbZ\setminus\{0\}}   \frac{y}{y^2+(r-2 \pi n)^2}  ,
\\ \Theta^+(y,r) &:= \Theta_\pi(y,r) -  \Theta^+_0(y,r)  =  \sum_{n\in\bbZ\setminus\{0,1\}}   \frac{y}{y^2+(r-2\pi n)^2}  .
\end{split}
\eeq
The reason for this is that when $r=f(x)- f(x-y)$ (we again drop $t$ from the notation), then denominators of all the terms in the first sum in \eqref{12.3} are no less than $\pi^2$, except for the one with $n=0$.  And when $r=f(x)+ f(x-y)$, then the same is true for all $n\neq 0,1$.  

We now separate the right-hand side of \eqref{12.1} accordingly when replicating Sections \ref{S3}-\ref{S6} for \eqref{12.1}.  The integrals $I^\pm_j$ ($j=1,2,3,4$),  arising from differentiation of 
\[
PV \int_\bbR (f_x(x)\pm f_x(x-y))\, \Theta^\pm_0 (y,f(x)\pm f(x-y))  \, dy
\]
in $x$ are the same as in Section \ref{S2}, but those with $\pm$ being $+$ each have a second term that comes from $\frac{y}{y^2+(\pi - f(x)+ \pi- f(x-y))^2}$.  All the estimates in Sections \ref{S3}--\ref{S6} carry over to this term, with $\pi-f$ in place of $f$, and  for the integral
\[
L_1^0 : = \int_{\bbR^2} \frac{h(y)( g(x)-g(y))^2}{(x-y)^2+(\pi-f(x)+ \pi- f(y))^2} \, dydx  
\]
(which is analogous to $L_1^+$) 
we also obtain  (recall that $h=h_{x_0}$ and $g=f'''$)
\[
L_1^+ + L_1^0 - L_1^- \le  \int_{\bbR^2} \frac{h(y)( g(x)-g(y))^2}{(x-y)^2+ \pi^2} \, dydx  
\le C  \| g\|_{\tilde L^2_{x_0}}^2
\]
instead of the first inequality in \eqref{2.90}, which holds  because $0\le f\le\pi$ implies
\begin{align*}
\min\{f(x)+ f(y), 2\pi -(f(x)+ f(y)) \} & \ge |f(x)- f(y)| ,
\\ \max\{f(x)+ f(y), 2\pi -(f(x)+ f(y)) \} & \ge \pi .
\end{align*}

On the other hand, all the   integrals   arising from differentiation of 
\[
PV \int_\bbR (f_x(x)\pm f_x(x-y))\, \Theta^\pm (y,f(x)\pm f(x-y))  \, dy
\]
in $x$ three times can be estimated much more easily.  First, those involving fourth derivatives of $f$ (analogous to $J_1^\pm$ and $J_2^\pm$) are rewritten in terms of only third (and lower order) derivatives of $f$ via integration by parts (cf.~\eqref{2.4} and \eqref{2.91}):
\begin{align}
 \int_\bbRr h(x)  & g(x) \, PV  \int_\bbR g'(x)\,\Theta^\pm (y,f(x)\pm f(x-y)) \, dydx   \notag
\\  = & - \frac 12 \int_\bbRr  h(x)  g(x)^2 \, PV \int_\bbR \frac d{dx}\Theta^\pm (y,f(x)\pm f(x-y))\, dydx \notag
\\ & - \frac 12 \int_\bbRr  h'(x)  g(x)^2 \, PV \int_\bbR \Theta^\pm (y,f(x)\pm f(x-y)) \, dydx  \lb{12.5}
\end{align}
and 
\begin{align*}
 \int_\bbRr h(x) & g(x) \, PV \int_\bbR g'(y) \,\Theta^\pm (x-y,f(x)\pm f(y)) \, dydx 
 \\  = & -  \int_\bbRr  h(x)  g(x) \, PV \int_\bbR g(y)\,\frac d{dy} \Theta^\pm (x-y,f(x)\pm f(y))\, dydx.
\end{align*}
The other terms are left as they are, and then in each term we separately estimate the integrals over $|y|\le 1$ and over $|y|\ge 1$ (in the last integral we first change variables $y\leftrightarrow x-y$).  For the former, we use the sum form of $\Theta^\pm$ from \eqref{12.4}  because each of these sums (for each appearing derivative of $\Theta^\pm$) is clearly uniformly bounded in $|y|\le 1$.
On the other hand, the integrals over $|y|\ge 1$ are estimated using the non-sum form of $\Theta^\pm$ from \eqref{12.4}, similarly to the analogous estimates involving derivatives of $\frac y{y^2+(f(x)\pm f(x-y))^2}$ at $|y|\ge 1$.  This time we even have bounded $f$, which makes the estimation easier, with the only difference being that $\Theta_\pi(y,r)\to \pm\frac 12$ (instead of $0$) as $y\to\pm\infty$.  But since derivatives of $\Theta_\pi(y,r)$ decay rapidly to 0 as $|y|\to\infty$,  this is only relevant to the only term where $\Theta^\pm$ is not differentiated, namely \eqref{12.5}.  The part of it that comes from $\Theta_\pi$ is
\[
-\frac 12 \int_\bbRr  h'(x)  g(x)^2 \, PV \int_{|y|\ge 1} \Theta_\pi (y,f(x)\pm f(x-y)) \, dydx
\]
and this  is  bounded by $C\| g\|_{\tilde L^2_{x_0}}^2$ due to
\[
\sup_{r\in\bbR} \left| \Theta_\pi(y,r)- \frac 12 \right| \le \frac 1{e^{|y|}-2}
\]
whenever $|y|\ge 1$.

It follows that we again obtain  \eqref{2.3}, and Sections \ref{S7}--\ref{S11} then extend to \eqref{12.1} in a straightforward manner, except for the proof of Theorem \ref{T.1.2}(i).  The only adjustment needed is in \eqref{10.2a}, where we instead choose 
\[
f_\eps(\cdot,0)=(1- {4\eps} \pi^{-1})\,\phi_\eps*\psi+2\eps,
\]
 which means that we also have $\sup f_\eps(\cdot,0) \le \pi-2\eps$.  And then the proof of the last claim in Theorem \ref{T.1.3}(ii) (with $l=\pi$) is also virtually identical to that of Theorem \ref{T.1.2}(ii).
 
It remains to prove the claim of Theorem \ref{T.1.2}(i) for \eqref{12.1}.  When $\psi-\frac l2\in H^3(\bbR)$ and $\|\psi-\frac l2\|_{L^\infty}<\frac l2$, then this was proved in \cite[Theorem 5]{CorGraOri}.  To obtain the result for general  $\psi\in\tilde H^3(\bbR)$ with $0\le\psi\le l$, it suffices to apply Theorem \ref{T.1.2}(iii) for \eqref{12.1} with $\tilde\psi$ being $\tilde \psi_{n,\eps}:=(1-\eps)(\psi-\frac l2)\chi_n+\frac l2$ and $\mu:=n$, where $\chi_n$ is a smooth characteristic function of $[-n,n]$.  Then Theorem \ref{T.1.2}(i) for \eqref{12.1} follows from this and \cite[Theorem 5]{CorGraOri} applied to the solutions with initial data $\tilde \psi_{n,\eps}$ (after taking first $n\to\infty$ and then $\eps\to 0$) because we have \eqref{1.10} and \eqref{1.7} holds uniformly in $(n,\eps)$.


\end{document}